\documentclass{article}

\usepackage{hyperref}

\usepackage[T1]{fontenc} 
\usepackage[utf8]{inputenc}
\usepackage[english=british]{csquotes}


\usepackage{amssymb}
\usepackage{amsmath} 
\usepackage{amsfonts}
\usepackage{mathrsfs}
\usepackage{amsthm}

\usepackage{tikz-cd}
\tikzcdset{
arrow style       = tikz,
cramped,
diagrams          = {>=stealth},
row sep/normal    = 2.2em,
column sep/normal = 2.7em,
}

\renewenvironment{tikzcd}{\begin{oldtikzcd}}{\end{oldtikzcd}\vspace{-3ex plus 3pt minus 1pt}}
\newcommand*{\ncd}{}

\showboxdepth=5
\showboxbreadth=5

\newtheorem{theorem}{Theorem}[section]
\newtheorem{lemma}[theorem]{Lemma}
\newtheorem{proposition}[theorem]{Proposition}
\newtheorem{corollary}{Corollary}[theorem]
\newtheorem{remark}[corollary]{Remark}
\theoremstyle{definition}
\newtheorem{example}{Example}[theorem]
\newtheorem{definition}[theorem]{Definition}
\newtheorem{notation}[theorem]{Notation}

\newtheorem{situation}[subsection]{Situation}%

\numberwithin{equation}{section}

\usepackage[capitalise,noabbrev]{cleveref}

\newcommand*{\stacks}[1]{\href{http://stacks.math.columbia.edu/tag/#1}{Tag~#1}}

\newcommand*{\ulcorn}[1]{{\ar[#1, phantom, very near start, "\ulcorner" description]}}

\newcommand*{\classicsets}[1]{\mathbb{#1}} 

\newcommand*{\nat}{\classicsets{N}}   
\newcommand*{\rat}{\classicsets{Q}}   
\newcommand*{\aff}{\classicsets{A}}   
\newcommand*{\proj}{\classicsets{P}}  
\newcommand*{\field}{\Bbbk}           

\newcommand*{\hookrightstealtharrow}{%
  \mathrel{\begin{oldtikzcd}[ampersand replacement=\&, column sep=1.15em]\arrow[r,hook]\&{}\end{oldtikzcd}}}

\newcommand*{\rightstealtharrow}{%
  \mathrel{\begin{oldtikzcd}[ampersand replacement=\&, column sep=1.15em]\arrow[r]\&{}\end{oldtikzcd}}}

\newcommand*{\myapplication}[1]{\mspace{-8mu} #1 \mspace{-8mu}} 
\renewcommand*{\to}{\myapplication{\rightstealtharrow}}

\newcommand*{\tohook}{\myapplication{\hookrightstealtharrow}}

\newcommand*{\from}{{\colon\mspace{-3mu}}\linebreak[0]} 

\newcommand*{\fcat}[1]{\mathsf{#1}} 

\newcommand*{\set}{\fcat{Set}}
\newcommand*{\sch}{\fcat{Sch}}

\DeclareMathAlphabet{\mathpzc}{OT1}{pzc}{m}{it}
\newcommand*{\funct}[1]{\scalebox{1.2}{$\mathpzc{#1}$}} 
\newcommand*{\fiso}{\funct{Iso}}
\newcommand*{\fflat}{\mbox{\boldmath $\flat\kern-1.4pt\flat$}}

\DeclareMathOperator{\Cl}{Cl}
\DeclareMathOperator{\red}{red}
\DeclareMathOperator{\cns}{c}
\DeclareMathOperator{\h}{h}

\DeclareMathOperator{\Id}{Id}
\DeclareMathOperator{\bl}{bl}

\DeclareMathOperator{\image}{Im}
\newcommand*{\im}[1]{\image(#1)}

\DeclareMathOperator{\spec}{Spec}
\newcommand*{\s}[1]{\spec(#1)}

\newcounter{parnum}[section]
\renewcommand{\theparnum}{\thesection.\arabic{parnum}}
\renewcommand{\paragraph}{\medskip\refstepcounter{parnum}\textbf{\theparnum}~\textbf}

\title{The blow up split sections family}

\author{Laura Brustenga i Moncusí}



\begin{document}

\maketitle

\begin{abstract}
  The universal scheme of clusters of sections is an adaption of Kleiman's iterated blow ups (which parametrise clusters of points) to parametrise clusters of sections.
  They can also be constructed iteratively, but the iterative step is not so clear.
  Defining the blow up split sections family, we characterise this iterative step.
  Roughly speaking, it is a morphism that combines the universal properties of blow ups and universal section families.
  It is a generalisation of blow ups, and as such, we show that it exhibits some sort of birationality.
  But now, the flattening stratification of a morphism plays also an important role.
\end{abstract}

\section*{Introduction}

Let \(X\), \(Y\) be schemes over a ground scheme \(S\) and \(Z\) a closed subscheme of \(X_Y=X\times_S Y\).
Our main purpose is to introduce the blow up split section family (or the blow up \S family for short) of the projection \(X_Y\to Y\) along \(Z\).
It is a \(X\)-scheme \(\mathfrak{B}\overset{b}{\to} X\) such that the pullback of \(Z\) by \((b\times \Id_Y)\from \mathfrak{B}_Y\to X_Y\) is an effective Cartier divisor of \(\mathfrak{B}_Y\) and satisfying a suitable universal property.
Roughly speaking, it combines the universal properties of the universal section family, or Weil restriction, of \(X_Y\to Y\) (see~\cref{def:Weil-rest-and-Usf}) and of the blow up of \(X_Y\) along \(Z\).
Under Noetherian and projective assumptions, \cref{thr:blow-up-S-family-exists} asserts that
the blow up section family exists.

When \(Y\) is the base field we retrieve the classic blow up, but in general wide new phenomena may appear.
For example, the resulting morphism \(b\times \Id_Y\) is not necessarily birational or even generically finite, see \cref{ssec:examples}.
\medskip

Let \(f\from X_Y\to W\) be an \(S\)-morphism.
We also introduce the \(f\)-constfy closed subscheme of \(Y\), a fundamental step for the blow up \S family construction.
It follows from the study of morphisms \(T\to Y\) for which the restriction of \(f\) to \(X_T\) is constant along the fibres of the projection \(X_T\to T\).
\cref{thr:constantfying-closed} shows that they form a category with a final object, which is a closed subscheme of \(Y\).
It is the so called \(f\)-constfy closed subscheme of \(Y\) (see~\cref{def:f-constfy}).
The existence of the \(f\)-constfy closed subscheme of \(Y\) follows from the representability of the functor \(\fiso\) (see~\cref{sec:constfy,def:isofy-functor}).
The representability of this functor has been studied in the literature, but explicit constructions for the representing scheme are lacking.
We also introduce the class of \(\aleph_1\)-morphisms (see~\cref{def:Mittag-Leffler,def:aleph-1-homomorphism,def:aleph-1-morphisms}), which allow an explicit description for the representing scheme (see~\cref{thr:closed-isofying}).
\medskip

Our purpose introducing the blow up \S family is the following.
Fix a morphism \(\pi\from \mathcal{S}\to B\).
The author's paper \cite{brustenga-existeness-of-r-Urcf} introduces a generalisation of clusters of points of a scheme \(X\) to the relative case, clusters of sections of \(\pi\) (\cite[Definition 2.11, p.7]{brustenga-existeness-of-r-Urcf}).
There, the author adapts Kleiman's iterated blow ups (\cite[\S 4.~p.36]{kleiman-multiple-point-1981}, which naturally parametrise clusters of points) to parametrise clusters of sections of \(\pi\) of length \(r\), which led to the Universal \(r\)-relative cluster family \(\Cl_r\) of \(\pi\) (\cite[Definitions 2.13, 2.17 and 2.19, pp.8-10]{brustenga-existeness-of-r-Urcf}).

Assuming \(\mathcal{S}\) quasiprojective and \(B\) projective, first, the scheme \(\Cl_r\) is realised as a locally closed subscheme of a suitable Hilbert scheme, which proves its existence (see~\cite[Theorem 2.24, p.11]{brustenga-existeness-of-r-Urcf}).
The new \(f\)-constfy construction allow to relax the hypothesis to assume just that \(B\) is proper.
Second, it is shown that, as in \cite{kleiman-multiple-point-1981}, a recursive construction of \(\Cl_{r+1}\) from \(\Cl_r\) is possible.
But now, in general, the iterative step is more complex than a simple blow up.
The blow up \S family is our attempt to formalise and study such an iterative step.

More precisely, there is a stratification of \(\Cl_r\times_{\Cl_{r-1}}\Cl_r\) such that every irreducible component of \(\Cl_{r+1}\) is either (a) birational to the closure of a stratum or (b) composed entirely of clusters whose \((r+1)\)-th section is infinitely near to the \(r\)-th, see \cite[\S 2]{brustenga-existeness-of-r-Urcf} and \cite[Corollary 3.10.2]{brustenga-existeness-of-r-Urcf}.
So, each type (a) irreducible component is a blow up of the closure of a stratum along a suitable sheaf of ideals.
The blow up \S family is the morphism from the union of all type (a) irreducible components (with its non-necessarily reduced structure) to the whole scheme \(\Cl_r\times_{\Cl_{r-1}}\Cl_r\).
That is, it incorporates the stratification of \(\Cl_r\times_{\Cl_{r-1}}\Cl_r\) and strata-wise it is the corresponding blow up (see~\cref{thr:uni-ordinary-2-rcf-as-sbsf}).
\medskip

\cref{sec:prelim} introduces the basic constructions, and the notation, widely used in the forthcoming sections.

\cref{sec:blow-up-along-a-locally-principal}, we formalise the idea that blowing up a locally Noetherian scheme along a locally principal subscheme consists into shaving off those associated point of the ambient scheme lying on the locally principal subscheme.
We also show that, assuming \(Y\to S\) flat and with geometrically integral fibres, there is a one-to-one correspondence between the associated points of \(X\) and those of its base change \(X_Y\).
This all yields that, in this case, the blow up of \(X_Y\) along any locally principal subscheme is again the Cartesian product over \(S\) of \(Y\) with a closed subscheme of \(X\) (see~\cref{thr:blowup-locally-ppal-product-form}).

\cref{sec:constfy} presents the functor \(\fiso\) and its known existence theorems.
It also introduce the new class of morphism, the \(\aleph_1\)-morphisms, and presents the explicit construction of representing scheme.
Finally, there is the \(f\)-constfy construction.

\cref{sec:cross-blow-up} contains the construction of the blow up \S family, which is mainly based on the \(f\)-constfy and the Cartesian product form the blow up of \(X_Y\) along a locally principal subscheme.
It also contains a generalisation for blow up \S families of the fact that a blow up is an isomorphism away of its centre.

\cref{ssec:universal-relative-clusters-family} presents a way to apply the newly developed techniques for the construction of \(\Cl_{r+1}\) from \(\Cl_r\times_{\Cl_{r-1}}\Cl_r\) for the case \(r=1\), which is the inductive step for the whole construction.
The new techniques developed in this paper allow us also to describe, for now set theoretically, where type (b) irreducible components of \(\Cl_{r+1}\) emerge from, the ones missing in blow up \S family.

Finally \cref{ssec:examples} presents some particular examples of the blow \S family illustrating the new phenomena (the resulting morphism is not necessarily birational) and to show some possible applications such as to systematise small resolutions.
\medskip

{\textbf{Acknowledgements:}} I thank Prof. Joaquim Roé for his great support.

\section{Preliminaries}\label{sec:prelim}
This section introduces the basic constructions, and the notation, widely used in the forthcoming sections.
\medskip

Let \(X,Y\) be schemes.
Given a point \(x\) of \(X\), we denote by \(\kappa(x)\) its residue field and by \(\{x\}\) the scheme \(\s{\kappa(x)}\).
Usually we denote a monomorphism by \(Y\tohook X\) (almost all of them will be open or closed embeddings).


\paragraph{Category Theory.}
\label{ssec:category-theory}
We present a basic construction on category theory, which expresses the representability of a functor in terms of a universal property.

\begin{definition}\label{def:cat-of-objects-of-a-functor}
  Let \(\funct{F}\from \fcat{C}\to \set\) be a contravariant functor on a category \(\fcat{C}\) with values in sets.
  The \emph{category of elements of \(\funct{F}\)}, denoted by \(\int\funct{F}\), is the category whose objects are couples \((C,\eta)\) with \(C\) an object of \(\fcat{C}\) and \(\eta\) an element of \(\funct{F}(C)\).
  And arrows \((C,\eta)\to (C',\eta')\) in \(\int\funct{F}\) are arrows \(f\in\fcat{C}(C,C')\) such that \(\funct{F}(f)(\eta')=\eta\).

  Equivalently, the category \(\int \funct{F}\) may be defined as the comma category \((\h\downarrow \int\funct{F})\) or the opposite category to the comma category \((\mathbf{1}\downarrow \funct{F})\), where \(\h\from \fcat{C}\to \set\) is the Yoneda embedding and \(\mathbf{1}\from \fcat{C}\to \set\) is the constant functor with image the terminal object of \(\set\) (see~\cite[Chapter III]{mac-lane-categories} or \cite[Exercise 1.3.vi, p.22 and \S 2.4, pp.66--72]{riehl-2017-category-in-context}).
\end{definition}

\begin{remark}\label{rmk:from-rep-a-functor-to-final-object}
  An object \((C,\eta)\) of \(\int \funct{F}\) is terminal if and only if the couple \((C,\eta)\) represents \(\mathcal{F}\).
\end{remark}

\begin{lemma}\label{lem:iso-iff-majorize}
  Let \(\fcat{C}\) be a category and consider the following Cartesian square in \(\fcat{C}\),
  \[\begin{tikzcd}
      Z_{X} \rar{g} \dar[swap, hook]{j} \ulcorn{dr} & Z \dar[swap, hook]{i} \\
      X \rar{f} & Y \\
    \end{tikzcd}
  \] where \(i\) (hence also \(j\)) is a monomorphism.
  Then, \(j\) is an isomorphism if and only if there is an arrow \(h\from X\to Z\) such that \(i\circ h=f\).
\end{lemma}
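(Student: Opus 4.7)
The plan is to prove the two implications separately, the forward one being a direct computation and the reverse one being the substantial direction that exploits both the pullback's universal property and the fact that $j$ is a monomorphism.

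For the forward direction, I would assume $j$ is an isomorphism and simply set $h \coloneqq g \circ j^{-1}\from X \to Z$. Then $i \circ h = i \circ g \circ j^{-1} = f \circ j \circ j^{-1} = f$, using commutativity of the given square. Nothing more is needed here.

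For the reverse direction, assume we are given $h\from X\to Z$ with $i\circ h = f$. The idea is to produce an inverse to $j$ using the universal property of the pullback applied to the pair of arrows $(\Id_X, h)$. Since $f\circ \Id_X = f = i\circ h$, the couple $(\Id_X, h)$ is compatible with the cospan $X\xrightarrow{f} Y\xleftarrow{i} Z$, so it induces a unique arrow $k\from X\to Z_X$ satisfying $j\circ k = \Id_X$ and $g\circ k = h$. Thus $j$ admits a section $k$. To conclude that $j$ is an isomorphism, I use that $j$ is a monomorphism: from $j\circ k = \Id_X$ we get $j\circ k\circ j = j = j\circ \Id_{Z_X}$, and cancelling $j$ on the left yields $k\circ j = \Id_{Z_X}$. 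Hence $k$ is a two-sided inverse of $j$.

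The only mildly subtle point is invoking the monomorphism hypothesis on $j$ at the end; everything else is formal from the definition of a pullback. No step looks to present a real obstacle, and the proof does not require any hypothesis on the ambient category $\fcat{C}$ beyond the existence of the displayed pullback.
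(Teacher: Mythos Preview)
Your proof is correct and follows essentially the same approach as the paper: the paper also sets \(h=g\circ j^{-1}\) for the forward direction, and for the reverse direction constructs the section \(\Id_X\times_Y h\) (your \(k\)) via the universal property of the pullback, then concludes using that a monomorphism with a section is an isomorphism. The only difference is that you spell out the cancellation argument for this last step, whereas the paper leaves it implicit.
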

\begin{proof}
  When \(j\) is an isomorphism, \(h=g\circ j^{-1}\).
  If there is such an arrow \(h\from X\to Z\), then \(j\circ (\Id_X\times_Y h)=\Id_X\) by definition, that is \(\Id_X\times_Yh\) is a section of (the monomorphism) \(j\).
  Hence, \(j\) is an isomorphism.
\end{proof}

\paragraph{Scheme theoretic image.}
\label{ssec:schematic-image-of-a-morphism}
We review the scheme theoretic image of a morphism, while we set the notation.

\begin{remark}\label{rmk:majorize-is-local-on-the-target}
  Fix a scheme \(Y\) and a monomorphism \(i\from Z\tohook Y\) (e.g. a closed or open embedding).
  Since isomorphisms are local in the target, by \cref{lem:iso-iff-majorize}, for a morphism \(f\from X\to Y\), the property of factorising through \(i\) is local on the source.
\end{remark}

\begin{definition}\label{def:sch-image}
  Let \(f\from X \to Y\) be a morphism of schemes.
  The \emph{scheme theoretic image of \(f\)} (or schematic image for short) is a closed subscheme \(\im{f}\) of \(Y\) through which \(f\) factorises and satisfying the following universal property: If \(f\) factorises through a closed embedding \(Z\tohook Y\), then \(\im{f} \tohook Y\) also factorises through it (see~\cite[Proposition 10.30]{ulrich-torsten-ag-i}, \cite[I Chapitre I, \S 9.5, p.176]{EGA-all} or \cite[\stacks{01R5}]{stacks-project}).
  We also call a diagram \(X \to \im{f} \tohook Y\) a scheme theoretic image.
  Given an open subscheme \(U\) of \(X\) the \emph{schematic closure of \(U\) in \(X\)} is the schematic image of the open embedding \(U\tohook X\).

  In addition, given a point \(x\) of \(X\), we denote by \(\overline{\{x\}}\) the schematic image of the natural morphism \(\s{\kappa(x)}\to X\).
\end{definition}

\begin{remark}\label{rmk:existence-of-sch-image}
  If \(f\) is quasi-compact, then the closed subscheme \(\im{f}\) of \(Y\) is defined by the quasi-coherent \(\mathscr{O}_Y\)-ideal \(\ker\!(\mathscr{O}_Y\to f_{*}\mathscr{O}_X)\).
\end{remark}

\begin{lemma}\label{lem:iso-descent-by-sch-image}
  Let \(X\to \overline{X}\tohook Y\) be a schematic image and \(i\from Z\tohook Y\) a closed subscheme.
  Then, the closed embedding \(Z_X\tohook X\) is an isomorphism if and only if so is \(Z_{\overline{X}}\tohook \overline{X}\).
\end{lemma}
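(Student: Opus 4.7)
The plan is to reduce both sides of the equivalence to statements about factorisations through the closed immersion \(i\from Z\tohook Y\), using \cref{lem:iso-iff-majorize}, and then to connect them via the universal property of the scheme theoretic image.

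First I would apply \cref{lem:iso-iff-majorize} to the Cartesian square defining \(Z_X=Z\times_Y X\), with \(j=(Z_X\tohook X)\) playing the role of the monomorphism coming from \(i\). This gives that \(Z_X\tohook X\) is an isomorphism if and only if the composite \(f\from X\to \overline{X}\tohook Y\) factorises through \(i\from Z\tohook Y\). In the same way, applied to the Cartesian square defining \(Z_{\overline{X}}=Z\times_Y \overline{X}\), it gives that \(Z_{\overline{X}}\tohook \overline{X}\) is an isomorphism if and only if the closed embedding \(\overline{X}\tohook Y\) factorises through \(i\).

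Thus the lemma reduces to the assertion that \(f\) factorises through \(Z\tohook Y\) if and only if \(\overline{X}\tohook Y\) does. The direction \emph{($f$ factorises through \(Z\) implies \(\overline{X}\) factorises through \(Z\))} is precisely the defining universal property of the scheme theoretic image recalled in \cref{def:sch-image}. Conversely, if \(\overline{X}\tohook Y\) factorises through \(Z\), then composing with the given factorisation \(X\to\overline{X}\) (which is part of the data of the schematic image) shows that \(f\) itself factorises through \(Z\). There is no real obstacle in this argument; the main point is just to recognise that the two Cartesian squares fall under the hypotheses of \cref{lem:iso-iff-majorize}, so that the universal property of \(\overline{X}\) directly delivers the equivalence.
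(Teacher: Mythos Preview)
Your proof is correct and follows essentially the same route as the paper: both reduce the question to factorisations through \(i\from Z\tohook Y\) and invoke the universal property of the schematic image together with \cref{lem:iso-iff-majorize}. The only cosmetic difference is that for the implication \(Z_{\overline{X}}\tohook\overline{X}\) iso \(\Rightarrow\) \(Z_X\tohook X\) iso, the paper simply observes that the latter is a base change of the former, whereas you pass through the factorisation characterisation on both sides; the content is the same.
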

\begin{proof}
  The closed embedding \(Z_X\tohook X\) is the base change of \(Z_{\overline{X}}\tohook \overline{X}\) by \(X\to \overline{X}\), hence if the latter is an isomorphism then so is the former.
  On the other side, if \(Z_X\tohook X\) is an isomorphism, via its inverse, the morphism \(X\to Y\) factorises through \(Z\tohook Y\).
  Then, by its universal property, the closed embedding \(\overline{X}\tohook Y\) also factorises through \(Z\tohook Y\) and the claim follows from \cref{lem:iso-iff-majorize}.
\end{proof}

The following is a standard result about schematic images (see~\cite[Lemma 14.6, p.424]{ulrich-torsten-ag-i}, \cite[\stacks{081I}]{stacks-project} or~\cite[IV$_2$ Chapitre IV, Proposition 2.3.2, p.14]{EGA-all}).

\begin{lemma}\label{lem:sch-image-commutes-with-flat-bc}
  Let \(S\) be a ground scheme and \(S' \to S\) a flat morphism.
  Let \(f\from X \to Y\) be a quasi-compact morphism of \(S\)-schemes with \(\overline{X}\) its schematic image.
  The schematic image of the base change \(f'\from X' \to Y'\) of \(f\) by \(S'\to S\) is the Cartesian product \(\overline{X}\times_SS'\).
\end{lemma}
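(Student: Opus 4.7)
The plan is to use the explicit description of the schematic image from \cref{rmk:existence-of-sch-image}: since \(f\) is quasi-compact, \(\overline{X}\) is cut out by the quasi-coherent ideal \(\mathcal{I} := \ker(\mathscr{O}_Y \to f_*\mathscr{O}_X)\). Quasi-compactness is stable under base change, so \(f'\) is quasi-compact too and \(\im{f'}\) is cut out by \(\mathcal{I}' := \ker(\mathscr{O}_{Y'} \to f'_*\mathscr{O}_{X'})\). Writing \(g\from Y'\to Y\) for the base change of \(S'\to S\) along \(Y\to S\), one has \(\overline{X}\times_S S' = \overline{X}\times_Y Y'\), which as a closed subscheme of \(Y'\) is cut out by \(g^{-1}\mathcal{I}\cdot \mathscr{O}_{Y'}\). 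The lemma therefore reduces to proving the equality of ideals \(\mathcal{I}' = g^{-1}\mathcal{I}\cdot \mathscr{O}_{Y'}\).

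The argument will combine two standard ingredients. The first is the flat base change isomorphism for quasi-coherent pushforward along a quasi-compact morphism, giving \(g^*f_*\mathscr{O}_X \cong f'_*\mathscr{O}_{X'}\); this can be verified locally on \(Y\) by passing to an affine open \(\spec A \subseteq Y\), covering its preimage in \(X\) by finitely many affines, and performing the direct tensor product computation, which is preserved since the \(A\)-algebra governing \(g\) is flat. The second is that flat pullback is exact and hence commutes with kernels of morphisms of quasi-coherent sheaves.

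Combining these, applying the exact functor \(g^*\) to the defining sequence \(0 \to \mathcal{I} \to \mathscr{O}_Y \to f_*\mathscr{O}_X\) yields the exact sequence \(0 \to g^*\mathcal{I} \to \mathscr{O}_{Y'} \to f'_*\mathscr{O}_{X'}\), whence \(g^*\mathcal{I} = \mathcal{I}'\). Flatness of \(g^*\) likewise turns the pullback of the inclusion \(\mathcal{I}\hookrightarrow \mathscr{O}_Y\) into the inclusion \(g^{-1}\mathcal{I}\cdot \mathscr{O}_{Y'}\hookrightarrow \mathscr{O}_{Y'}\), identifying \(g^*\mathcal{I}\) with \(g^{-1}\mathcal{I}\cdot \mathscr{O}_{Y'}\), which closes the argument. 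The main point of care is the justification of flat base change in the absence of a quasi-separatedness hypothesis on \(f\); once available via the affine cover argument above, everything else is formal exactness.
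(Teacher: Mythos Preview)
The paper does not supply its own proof of this lemma; it merely records it as a standard result and cites G\"ortz--Wedhorn, the Stacks Project (\stacks{081I}), and EGA~IV\(_2\). Your argument is essentially the standard one found in those references and is correct in outline.

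One small caution on the step you yourself flag: the sheaf-level isomorphism \(g^*f_*\mathscr{O}_X \cong f'_*\mathscr{O}_{X'}\) genuinely needs quasi-separatedness of \(f\) in general, since without it \(f_*\mathscr{O}_X\) need not even be quasi-coherent. The affine reduction you sketch bypasses this cleanly, but it does so by computing the \emph{kernel} directly rather than the pushforward: after localizing to \(Y=\spec A\) and covering \(X\) by finitely many affines \(\spec B_i\), one has \(\mathcal{I}(Y)=\bigcap_i \ker(A\to B_i)\), and a finite intersection of submodules commutes with flat base change. So the conclusion \(\mathcal{I}' = g^{-1}\mathcal{I}\cdot\mathscr{O}_{Y'}\) follows, but not via an intermediate isomorphism of pushforwards. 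With that adjustment in phrasing, your proof is complete and matches the cited sources.
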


\paragraph{Constant morphisms.}
\label{ssec:constant-morphisms}
We review the scheme theoretic version of a constant morphism.

\begin{definition}\label{def:constant-along-the-fibres-2}
  Let \(S\) be a ground scheme.
  Let \(p\from X\to Y\) and \(f\from X\to W\) be \(S\)-morphisms.
  Consider the following Cartesian diagram
  \[\begin{tikzcd}
      Z \rar{} \dar[hook]{} \ulcorn{dr} & W \dar[hook]{\Delta_{W/S}}\\
      X\times_Y X \rar{f\times_Y f} & W\times_S W \\
    \end{tikzcd}
  \] where \(\Delta_{W/S}\) is the diagonal.
  We say that the morphism \(f\) is \emph{constant along the fibres} of \(p\) if the monomorphism \(Z\tohook X\times_Y X\) is an isomorphism.
\end{definition}

The standard (and maybe more intuitive) definition of a morphism \(f\from X\to W\) being constant along the fibres of another morphism \(p\from X\to Y\) is that the following diagram commutes.
\[\begin{tikzcd}
    X\times_Y X \rar{q_1} \dar[swap]{q_2} & X \dar{f} \\
    X \rar[swap]{f} & W \\
  \end{tikzcd}
\]\ncd That is, the kernel, or equaliser, of the two morphisms \(f\circ q_1,f\circ q_2\) is the whole scheme \(X\times_Y X\), which, by \cref{lem:iso-iff-majorize}, is equivalent to \cref{def:constant-along-the-fibres-2} (see~\cite[Définition 1.4.2, p.34 and Proposition 1.4.10, p.37]{ega-i-2nd}).

\begin{remark}\label{rmk:const-along-stable-by-postcomposition}
  From the second definition follows straightforwardly that, given an \(S\)-morphism \(f'\from W\to W'\), if \(f\) is constant along the fibres of \(p\), then so is \(f'\circ f\).
  If furthermore \(f'\) is a monomorphism, then the converse also holds.
\end{remark}


\begin{proposition}\label{prop:const-along-fibres-and-rel-const}
  Let \(S\) be a ground scheme.
  Let \(p\from X\to Y\) and \(f\from X\to W\) be \(S\)-morphisms.
  If \(p\) is an fpqc morphism (see~\cite[Chapter 2, Definition 2.34, p.28]{fga-explained}, then \(f\) is constant along the fibres of \(p\) if and only if there is an \(S\)-morphism \(g\from Y\to W\) such that \(f=g\circ p\).
  In this case, the morphism is unique.
\end{proposition}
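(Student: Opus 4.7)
The plan is to split the statement into three parts and handle each separately.

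First, I would translate the monomorphism formulation of \cref{def:constant-along-the-fibres-2} into the more workable equaliser form: by \cref{lem:iso-iff-majorize}, as observed in the discussion following that definition, $f$ being constant along the fibres of $p$ amounts to the equality $f\circ q_1 = f\circ q_2$ of the two compositions $X\times_Y X\rightrightarrows W$. The easy implication is then immediate: if $f=g\circ p$, then $f\circ q_i = g\circ p\circ q_i$, and these coincide since $p\circ q_1=p\circ q_2$ by the defining property of the fibre product. Uniqueness of $g$, when it exists, is formal from the fact that every fpqc morphism is an epimorphism in the category of $S$-schemes.

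For the nontrivial implication I would invoke fpqc descent for morphisms into a fixed target. Concretely, the Yoneda presheaf $T\mapsto \operatorname{Hom}_S(T,W)$ is a sheaf for the fpqc topology, so for the fpqc cover $p\from X\to Y$ the diagram
\[
  \operatorname{Hom}_S(Y,W)\longto \operatorname{Hom}_S(X,W) \rightrightarrows \operatorname{Hom}_S(X\times_Y X,W)
\]
is an equaliser of sets. The hypothesis $f\circ q_1=f\circ q_2$ therefore produces a unique $g\in\operatorname{Hom}_S(Y,W)$ with $g\circ p=f$, which also subsumes the uniqueness claim.

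The whole argument rests on a single nontrivial ingredient, namely fpqc descent of morphisms with a fixed target; I would cite it from a standard reference such as \cite{fga-explained} or the Stacks project. Everything else reduces to diagram chasing with fibre products together with the translation between the two equivalent forms of \emph{constant along the fibres} recorded in the discussion preceding the proposition. The only real obstacle is thus locating the descent statement in exactly the form required, as opposed to anything intrinsic to the argument itself.
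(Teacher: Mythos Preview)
Your proposal is correct and follows essentially the same route as the paper: both reduce the nontrivial direction to the fact that the representable presheaf \(T\mapsto \operatorname{Hom}_S(T,W)\) is a sheaf for the fpqc topology, applied to the single-element cover \(p\from X\to Y\) with overlap \(X\times_Y X\). Your treatment is slightly more explicit about the easy direction and the epimorphism argument for uniqueness, but the substance is identical.
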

\begin{proof}
  It is a particular case of a bigger result on descent.
  Namely, the functor of points \(\h_{Y/S}\) of \(Y\to S\) is a sheaf in the fpqc topology (see \cite[Chapter 2, Theorem 2.55, p.34]{fga-explained}).
  The original result, due to Alexander Grothendieck, is \cite[B.1~Théorème 2.~(190-19)]{grothendieck-sb-i}, which applies to a slightly less general class of morphisms.
  The result may also be found at \cite[\stacks{03O3}]{stacks-project}.

  In this case, since there is just one element covering the whole scheme \(Y\), there is just one overlap, the scheme \(X\times_YX\) (such overlap would be trivial by the Zariski topology, but here it is not).
  So, whenever \(f\from X\to Y\) agrees with itself on this overlap, it extends uniquely to an \(S\)-morphism \(g\from Y\to W\).
  But this condition is equivalent to \(f\) being constant along the fibres of \(p\).
\end{proof}


\paragraph{Weil restrictions and families of sections.}
\label{ssec:families-sections}
We review two equivalent constructions, the Weil restriction and the universal sections family, and we state their main existence theorem.

\begin{definition}\label{def:Weil-rest-and-Usf}
Let \(S\) be a ground scheme and \(\pi\from X\to Y\) an \(S\)-morphism.
Consider the functor \(\funct{Sect}_{Y/S}(X)\from \sch_S\to \set\) sending an \(S\)-scheme \(T\to S\) to the set
\[
  \funct{Sect}_{Y/S}(X)(T)=\sch_Y(Y_T,X).
\] When this functor is representable, the representing scheme is called the \emph{Universal section family of \(\pi\)}, see \cite[II, C, nº2, pp.380,381, le foncteur ``ensemble des sections'']{grothendieck-sb-ii}.
\end{definition}

In the literature the Universal section family of \(\pi\) is studied from two different points of view.
It is also called the \emph{Weil restriction of \(\pi\)}, see \cite[\S 7.6, p.191]{bosch-1990-neron-models} and there are two main cases where the representability of the functor \(\funct{Sect}_{Y/S}(X)\) it is established.
\medskip

\cref{thr:Usf-representable} below is due to Alexander Grothendieck, see \cite[\S 4.c, pp.267,268]{grothendieck-sb-iv}.
For an alternative equivalent exposition see \cite{nitsure-construction-2005}.

\begin{theorem}\label{thr:Usf-representable}
Let \(S\) be a locally Noetherian ground scheme and \(\pi\from X\to Y\) an \(S\)-morphism.
If \(Y\to S\) is proper and flat and \(X\) is quasiprojective over \(S\), then \(\funct{Sect}_{Y/S}(X)\) is representable by a locally Noetherian quasiprojective \(S\)-scheme.
\end{theorem}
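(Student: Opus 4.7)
The plan is to realise sections of $\pi$ as graphs and then cut out the section locus from a suitable Hilbert scheme. Every section $s\from Y\to X$ of $\pi$ is a closed embedding: since $X$ is quasiprojective over $S$ it is separated over $S$, so the diagonal $X\to X\times_SX$ is closed; base change along the closed embedding $X\times_YX\tohook X\times_SX$ then shows that the relative diagonal $X\to X\times_YX$ is closed as well, whence $\pi$ is separated and each of its sections is a closed embedding. Applied after any base change, this yields, functorially in an $S$-scheme $T$, a bijection between sections of $\pi_T\from X_T\to Y_T$ and closed subschemes $Z\tohook X_T$ for which $\pi_T|_Z\from Z\to Y_T$ is an isomorphism.

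Next, since $Y\to S$ is proper and flat, so is each $Y_T\to T$, and under the above bijection $Z$ is also proper and flat over $T$. Hence the graphs of sections are classified by the Hilbert functor of $X/S$. I would fix an open embedding $X\tohook\overline{X}$ into an $S$-projective scheme, invoke Grothendieck's Hilbert scheme theorem to represent $\mathrm{Hilb}_{\overline{X}/S}$ by a disjoint union of $S$-projective locally Noetherian schemes indexed by Hilbert polynomial, and then note that the subfunctor $\mathrm{Hilb}_{X/S}\subseteq\mathrm{Hilb}_{\overline{X}/S}$ parametrising families whose underlying set lies inside $X_T$ is an open subfunctor, hence represented by an open subscheme $H_0$ of $\mathrm{Hilb}_{\overline{X}/S}$.

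With $\mathcal{Z}\tohook X\times_SH_0$ the universal family, consider the $H_0$-morphism
\[ \phi\from \mathcal{Z} \longto Y\times_SH_0 \]
induced by $\pi$. By the previous two steps, $\funct{Sect}_{Y/S}(X)$ is the subfunctor of $H_0$ cut out by the condition that $\phi$ become an isomorphism after base change. Both $\mathcal{Z}$ and $Y\times_SH_0$ are proper, flat, and of finite presentation over $H_0$, so the isomorphism locus of $\phi$ is an open subscheme of $H_0$ (see \cite[\stacks{05XD}]{stacks-project}); this open subscheme represents $\funct{Sect}_{Y/S}(X)$ and inherits, as an open subscheme of a Hilbert scheme, the structure of a locally Noetherian locally quasiprojective $S$-scheme.

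The main obstacle is the openness step: upgrading fibrewise isomorphism to an isomorphism on a neighbourhood depends squarely on the joint properness, flatness, and finite presentation of source and target over $H_0$, and it is exactly here that the hypotheses $Y\to S$ proper and flat and $X\to S$ quasiprojective become indispensable. Everything else is routine once the Hilbert scheme of an $S$-projective scheme is in hand and one knows that "supported in a given open" is itself an open condition on the Hilbert scheme.
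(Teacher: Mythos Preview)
The paper does not prove this theorem; it is quoted as a result of Grothendieck (with Nitsure's exposition in \emph{FGA Explained} given as an alternative reference), so there is no ``paper's own proof'' to compare against. Your argument is precisely the classical one carried out in those references: identify sections with their graphs, embed the graph functor into the Hilbert functor of a projective compactification, and then carve out the section locus by the open conditions ``supported in $X$'' and ``$\phi$ is an isomorphism''. This is correct, and your justification that $\pi$ is separated (hence sections are closed embeddings) is fine, since $X\to S$ separated forces any $S$-morphism $X\to Y$ to be separated. The only quibble is cosmetic: you conclude with ``locally quasiprojective'' whereas the statement says ``quasiprojective''; in fact the representing scheme is a countable disjoint union of quasiprojective $S$-schemes, so the discrepancy is in the theorem's phrasing rather than in your argument.
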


\begin{remark}\label{rmk:Usf-representable-for-picewise-qp}
  This result can be easily generalised to \(X\) piecewise quasiprojective, see \cite[Theorem 1.7, p.4]{brustenga-existeness-of-r-Urcf}.
\end{remark}

\cref{thr:Weil-restriction-representable} below can be found in \cite[Theorem 4, p.194]{bosch-1990-neron-models}.

\begin{theorem}\label{thr:Weil-restriction-representable}
Let \(S\) be a ground scheme and \(\pi\from X\to Y\) an \(S\)-morphism.
If \(Y\to S\) is finite and locally free and, for every point \(s\) of \(S\), every finite set \(P\) of points on the fibre \(X_s\) of \(X\to S\) is contained in an affine open subscheme of \(X\), then \(\funct{Sect}_{Y/S}(X)\) is representable by a locally Noetherian quasiprojective \(S\)-scheme.
\end{theorem}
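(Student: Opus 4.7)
The plan is to follow the classical strategy: reduce first to the case where both \(S\) and \(X\) are affine, and then glue using the affine-neighbourhood hypothesis on \(X\). Since representability is Zariski-local on \(S\), I would first restrict to affine opens of \(S\) over which \(\pi_{*}\mathscr{O}_Y\) is free of some rank \(n\), with a chosen basis \(e_1,\dots,e_n\). If \(X=\aff^m_S\), then for each \(S\)-scheme \(T\), a \(Y\)-morphism \(Y_T\to X\) is the same as a tuple of \(m\) global sections of \(\mathscr{O}_{Y_T}\), and expanding each such section in the basis \(e_1,\dots,e_n\) produces \(mn\) global sections of \(\mathscr{O}_T\); hence \(\funct{Sect}_{Y/S}(\aff^m_S)\) is represented by \(\aff^{mn}_S\). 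If \(X\tohook \aff^m_S\) is cut out by equations \(f_1,\dots,f_r\), then each equation, expanded in the basis, yields \(n\) polynomial relations on the \(mn\) coordinates, and \(\funct{Sect}_{Y/S}(X)\) is represented by the corresponding closed subscheme of \(\aff^{mn}_S\). Arbitrary affine \(X\) then follows by writing it as a closed subscheme of an affine space.

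For general \(X\), I would glue the affine pieces via open subfunctors. The key technical claim is that, for every open embedding \(U\tohook X\), the natural transformation \(\funct{Sect}_{Y/S}(U)\to \funct{Sect}_{Y/S}(X)\) is an open subfunctor. Indeed, given a \(T\)-point represented by \(\sigma\from Y_T\to X\), the locus in \(T\) over which \(\sigma\) factorises through \(U\) equals the complement of the image, in \(T\), of the closed set \(\sigma^{-1}(X\setminus U)\subseteq Y_T\) under the projection \(Y_T\to T\); since this projection is finite, and hence closed, this locus is open in \(T\). The affine-neighbourhood hypothesis then makes the family \(\{\funct{Sect}_{Y/S}(U_\alpha)\}\), ranging over affine opens \(U_\alpha\tohook X\), an open cover of \(\funct{Sect}_{Y/S}(X)\): for every point \(t\in T\), the finite set \(\sigma(Y_t)\subseteq X_t\) is contained in some affine open \(U_\alpha\), and by the preceding closedness argument an open neighbourhood of \(t\) factors through \(\funct{Sect}_{Y/S}(U_\alpha)\). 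Since each \(\funct{Sect}_{Y/S}(U_\alpha)\) is representable by the affine case, standard gluing of open subfunctors yields the representing scheme.

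The main obstacle is the open-subfunctor step: verifying that ``the section stays inside \(U\)'' defines an open locus in \(T\) rests essentially on the finiteness of \(Y\to S\), through closedness of \(Y_T\to T\), and the affine-neighbourhood hypothesis on \(X\) is used precisely to guarantee that the resulting open subfunctors cover the whole functor. The remaining bookkeeping, once representability of the Weil restriction of an affine scheme is in hand, reduces to the standard fact that a functor admitting a Zariski-type open cover by representable open subfunctors is itself representable by a scheme.
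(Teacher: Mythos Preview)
The paper does not actually prove this theorem: it is stated with the remark that it ``can be found in \cite[Theorem 4, p.194]{bosch-1990-neron-models}'' and no argument is given. Your proposal is essentially the classical proof from that reference --- affine case by explicit coordinates using a basis of \(\pi_{*}\mathscr{O}_Y\), then the open-subfunctor argument exploiting that \(Y_T\to T\) is finite (hence closed), with the affine-neighbourhood hypothesis ensuring the open subfunctors cover --- so there is nothing to compare on the level of strategy.

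One point worth flagging: your sketch (like the argument in \cite{bosch-1990-neron-models}) yields representability by a scheme, but it does not establish the ``locally Noetherian quasiprojective'' clause in the conclusion as the paper states it. With no Noetherian hypothesis on \(S\) and only the affine-neighbourhood condition on \(X\), there is no reason the Weil restriction should be locally Noetherian or quasiprojective over \(S\); the phrasing in the paper appears to have been carried over verbatim from the preceding \cref{thr:Usf-representable}, where those adjectives are justified by the hypotheses there. So the gap is in the statement rather than in your argument, but if you intend to prove the theorem exactly as written you should either add hypotheses or weaken the conclusion to mere representability.
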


\cref{lem:qprojective-affiness-finite-set-points} below is well-known (e.g., \cite[Proposition 3.36 (b), p.109]{liu-2002-AG-Arithmetic-curves}).

\begin{lemma}\label{lem:qprojective-affiness-finite-set-points}
  Let \(X\) be quasiprojective scheme over a ring \(A\).
  Then, every finite set \(P\) of points on \(X\) is contained in some affine open subscheme of \(X\).
\end{lemma}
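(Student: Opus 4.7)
The plan is to embed $X$ into projective space over $A$ and then cut out an affine open neighbourhood of $P$ by a single hypersurface that contains the ``boundary'' $\overline{X}\setminus X$ while avoiding every point of $P$.

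First I would use quasiprojectivity to fix an open embedding $X\tohook \overline{X}$ into a projective $A$-scheme together with a closed embedding $\overline{X}\tohook \proj^n_A$ for some $n\ge 0$. Let $Y:=\overline{X}\setminus X$ carry its reduced closed subscheme structure and be viewed as a closed subscheme of $\proj^n_A$. Writing $S:=A[x_0,\dots,x_n]$, let $I\subseteq S$ be the homogeneous ideal cutting out $Y$ and, for each $p\in P$, let $\mathfrak{p}_p\subseteq S$ be the homogeneous prime ideal corresponding to $p$. Because $p\in X$ lies off $Y$, one has $I\not\subseteq \mathfrak{p}_p$ for every $p\in P$.

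The next step is to produce a single homogeneous element $F\in I$ of some degree $d\ge 1$ with $F\notin \mathfrak{p}_p$ for every $p\in P$. Once such an $F$ is in hand, I would set $U:=\overline{X}\cap D_+(F)$. This $U$ is a closed subscheme of the affine scheme $D_+(F)\subseteq \proj^n_A$, hence itself affine; it contains every $p\in P$ because $F(p)\ne 0$; and it is contained in $X$ because $Y\subseteq V_+(F)$. Thus $U$ is the desired affine open subscheme of $X$ containing $P$.

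\textbf{Main obstacle.} The delicate step is the production of the single element $F$. Starting from homogeneous $F_p\in I$ with $F_p\notin \mathfrak{p}_p$ (whose degrees a priori depend on $p$), I would first replace each $F_p$ by a suitable power so that all of them share a common degree $d$, and then combine them into one element of $I_d$ avoiding every $\mathfrak{p}_p$. Over a general commutative ring $A$ a naive ``generic linear combination'' argument need not succeed, so one invokes the standard inductive argument for graded prime avoidance, which is precisely the content of the proof of \cite[Proposition 3.36(b), p.109]{liu-2002-AG-Arithmetic-curves}; given the explicit reference cited by the author, this is effectively the step being imported from the literature.
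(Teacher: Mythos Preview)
Your proposal is correct and follows the standard argument. Note, however, that the paper does not give its own proof: the lemma is stated as well-known with an explicit reference to \cite[Proposition 3.36(b), p.109]{liu-2002-AG-Arithmetic-curves}, so there is nothing to compare against beyond the fact that your sketch is precisely the argument one finds at that citation.
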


Let \(\mathfrak{X}\) be a \(S\)-scheme and \((\psi\from \mathfrak{X}_Y\to X)\in \funct{Sect}_{Y/S}(X)(\mathfrak{X})\).
By \cref{rmk:from-rep-a-functor-to-final-object}, if the couple \((\mathfrak{X},\psi)\) represents the functor \(\funct{Sect}_{Y/S}(X)\), then it satisfies the following universal property:
For every \(S\)-scheme \(T\) and every \(\sigma\in \funct{Sect}_{Y/S}(X)(T)\), there is a unique \(S\)-morphism \(f\from T \to \mathfrak{X}\) such that the following diagram commutes.
\begin{equation}
  \begin{tikzcd}[row sep=tiny]
    T_Y \drar[]{\sigma} \ar[dd,swap,"f_Y"] & \\
    & X\\
    \mathfrak{X}_Y \urar[swap]{\psi} & \\
    \end{tikzcd}\label{dig:Usf}
\end{equation}\ncd

\section{Blowing up along a locally principal subscheme.}
\label{sec:blow-up-along-a-locally-principal}
Let \(S\) be a ground scheme and \(X\to S\) an \(S\)-scheme.
We show that blowing up a locally Noetherian scheme \(X\) along a locally principal subscheme \(Z\) consists of shaving off those associated points of \(X\) lying on \(Z\), \cref{thr:2n-loc-ppal-blowup}.
Given a flat \(S\)-scheme \(Y\to S\) with geometrically integral fibres, we show that there is a one-to-one correspondence, preserving specialisations, between the associated points of \(X\) and those of \(X\times_SY\), \cref{lem:associated-pt-of-product-over-flat-base}.
This all yields that, the blow up of \(X\times_SY\) along any locally principal subscheme is again the Cartesian product over \(S\) of \(Y\) with a closed subscheme of \(X\), see~\cref{thr:blowup-locally-ppal-product-form}.
\medskip

Let \(X\) be a scheme.
We recall that a \emph{locally principal subscheme} of \(X\) is a closed subscheme whose sheaf of ideals is locally generated by a single element, whereas an \emph{effective Cartier divisor} of \(X\) is a closed subscheme whose sheaf of ideals is locally generated by a single \emph{regular} element (see~\cite[Remark 6.17.1, p.145]{hartshorne-algebraic-1977}, \cite[Definition 11.24, p.301]{ulrich-torsten-ag-i}, \cite[\stacks{01WQ}]{stacks-project} or \cite[IV$_4$ Chapitre IV, Définition 21.1.6, p.257, and Paragraphe 21.2.12, p.262]{EGA-all}).
\medskip

Let \(f,g\from X\to Y\) be two morphisms and \(U\) an open subscheme of \(X\).
When \(U\) is (topologically) dense in \(X\), the equation \(f|_U=g|_V\) implies \(f|_{X_{\red}}=g|_{X_{\red}}\) but not generally \(f=g\).
That motivates the following definition.

\begin{definition}\label{def:schematic-dense-open-subsch}
  Let \(X\) be a scheme.
  An open subscheme \(U\) of \(X\) is \emph{scheme theoretically dense} in \(X\) if, for every open \(V\) of \(X\), the schematic closure of \(U\cap V\) in \(V\) is equal to \(V\) (see~\cite[\stacks{01RB}]{stacks-project} or \cite[IV$_3$ Chapitre IV, Définition 11.10.2, p.171]{EGA-all}).
\end{definition}

\begin{remark}\label{rmk:sch-dense-to-dense}
  In general, there are schemes \(X\) with open subschemes \(U\) which are not schematically dense although \(\overline{U}=X\) (see~\cite[\stacks{01RC}]{stacks-project}).
  But, when the ambient scheme \(X\) is locally Noetherian, every open embedding is quasicompact (see~\cite[\stacks{01OX}]{stacks-project} or \cite[I Chapitre I, Proposition 6.6.4, p.153]{EGA-all}) and then an open subscheme \(U\tohook X\) is schematically dense if and only if \(\overline{U}=X\) (see~\cite[\stacks{01RD}]{stacks-project} or \cite[IV$_3$ Chapitre IV, Remarque 11.10.3 (iv), p.171]{EGA-all}).
\end{remark}

\begin{proposition}\label{prop:1r-loc-ppal-blowup}
  Let \(X\) be a scheme and \(Z\) a closed subscheme of \(X\).
  Let \(i\from U \to X\) be the open subscheme complement of \(Z\) in \(X\) and \(b\from \overline{U} \tohook X\) its schematic closure.
  If \(Z\) is a locally principal subscheme of \(X\), then the closed embedding \(b\from \overline{U} \tohook X\) is the blow up of \(X\) along \(Z\).
\end{proposition}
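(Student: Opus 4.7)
The plan is to verify directly that $b\from \overline{U}\tohook X$ satisfies the universal property of the blow up along $Z$: namely, that the pullback of $Z$ to $\overline{U}$ is an effective Cartier divisor and that $b$ is initial with this property among $X$-schemes.

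First I would reduce to the affine case. The blow up is local on the target by construction, and so is the formation of the schematic closure $\overline{U}$: since $Z$ is locally principal, the complement $U\tohook X$ is locally of the form $D(f)\tohook \spec(A)$ and is therefore locally quasi-compact, so \cref{lem:sch-image-commutes-with-flat-bc} applied to flat open embeddings shows that $\overline{U}$ restricts correctly. I therefore reduce to $X=\spec(A)$ and $Z=V(f)$ for some $f\in A$, in which case $U=\spec(A_f)$ and by \cref{rmk:existence-of-sch-image} one has $\overline{U}=\spec(A/I)$ with $I=\ker(A\to A_f)=\bigcup_n \operatorname{Ann}_A(f^n)$. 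A one-line check shows that the image $\bar f$ of $f$ in $A/I$ is a nonzerodivisor, so the pullback $\overline{U}\times_X Z=\spec(A/(I+(f)))$ is indeed an effective Cartier divisor on $\overline{U}$.

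For the universal property, let $\varphi\from T\to X$ be any morphism such that $D:=T\times_X Z$ is an effective Cartier divisor on $T$. Uniqueness of a factorisation through the closed embedding $\overline{U}\tohook X$ is automatic. For existence I would use the standard fact (immediate from \cref{def:schematic-dense-open-subsch} and \cref{rmk:sch-dense-to-dense}, since locally $D=V(g)$ with $g$ a nonzerodivisor and hence $B\to B_g$ injective) that $T\setminus D$ is scheme-theoretically dense in $T$. Now $T\setminus D=\varphi^{-1}(U)$ maps into $U$ and hence factors through $\overline{U}$, so the open embedding $T\setminus D\tohook T$ factors through the closed embedding $j\from T\times_X\overline{U}\tohook T$. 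The universal property of the schematic image (\cref{def:sch-image}) then forces the schematic image of $T\setminus D\tohook T$, which is all of $T$, to factor through $j$; hence $j$ is an isomorphism, and \cref{lem:iso-iff-majorize} supplies the required morphism $T\to\overline{U}$.

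The main delicacy I anticipate is not the local computation of $\overline{U}$ (which is elementary) but confirming cleanly that its formation is compatible with restriction to open subschemes of $X$ without any Noetherian hypothesis on the ambient. The locally-principal hypothesis on $Z$ is precisely what saves the argument: it makes $U\tohook X$ locally quasi-compact, so \cref{lem:sch-image-commutes-with-flat-bc} applies on each affine open and the local schematic closures glue to the global $\overline{U}$.
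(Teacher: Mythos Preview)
Your proof is correct and follows essentially the same approach as the paper: reduce to the affine case \(X=\spec(A)\), \(Z=V(f)\), identify \(\overline{U}=\spec(A/\mathfrak{a})\) with \(\mathfrak{a}=\bigcup_n(0:f^n)\), and verify the universal property of the blow up. The paper checks the universal property directly at the level of ring maps (any \(A\to B\) sending \(f\) to a nonzerodivisor kills \(\mathfrak{a}\)), whereas you phrase the same verification scheme-theoretically via schematic density of \(T\setminus D\) in \(T\); unwinding your argument on an affine open of \(T\) recovers exactly the paper's algebraic check. You are also more explicit than the paper about why the formation of \(\overline{U}\) is local on \(X\), which the paper's sketch leaves implicit.
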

Note that if \(Z\) is an effective Cartier divisor then \(\overline{U}=X\) (see~\cite[IV$_2$ Chapitre IV, Corollaire 3.1.9, p.38]{EGA-all}).
\begin{proof}[sketch]
  We may assume \(X\) affine, say \(X\cong \s{A}\) for some ring \(A\), and \(Z\) defined by a principal ideal, say \((f)\subseteq A\).
  Then, the open subscheme \(U\) of \(X\) is \(D(f)\cong \s{A_f}\) and the closed embedding \(b\) is given by the natural homomorphism \(A \to A/\mathfrak{a}\) where \(\mathfrak{a}=\ker(A \to A_f)\subseteq A\).
  Furthermore, we may assume \(f\in A\) non-nilpotent, otherwise the result is trivial.
  Then, \(\mathfrak{a}=\cup_{n\in \nat}(0:f^n)\) is a proper ideal of \(A\) and it satisfies the following universal property: Ever homomorphism \(\varphi\from A \to B\) such that \(\varphi(f)\in B\) is a non-zerodivisor, factorises through \(A\to A/\mathfrak{a}\).
  Hence, \(\overline{U}\) is the blow up of \(X\) along \(Z\).
\end{proof}

The blow up of any scheme \(X\) along any locally principal subscheme is just the schematic closure of its open complement.
But, when the scheme \(X\) is locally Noetherian, there are no pathological associated points, see \cite[\stacks{02OI}]{stacks-project}, and then, as \cref{thr:2n-loc-ppal-blowup} below shows, we can understand much better which parts of \(Z\) are shaved off on the blowing up procedure.

\begin{theorem}\label{thr:2n-loc-ppal-blowup}
  Let \(X\) be a locally Noetherian scheme and \(Z\) a locally principal subscheme of \(X\).
  Let \(T_Z\) be the subset of \(X\) union of the underlying sets of \(\overline{x}\) for all \(x\in Ass(X)\cap Z\).
  Let \(V\) be its complement in \(X\).
  Then \(V\) is an open subscheme of \(X\) and its schematic closure \(\overline{V}\tohook X\) is the blow up of \(X\) along \(Z\).
\end{theorem}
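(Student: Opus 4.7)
The plan is to leverage \cref{prop:1r-loc-ppal-blowup}, which already identifies the blow up of $X$ along $Z$ with the schematic closure $\overline{U}\tohook X$ of the open complement $U=X\setminus Z$.  Since $T_Z\subseteq Z$, we have open embeddings $U\tohook V\tohook X$, so the statement reduces to proving the equality $\overline{V}=\overline{U}$ of closed subschemes of $X$.

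First I would check that $V$ is actually open: in a locally Noetherian scheme every affine open contains only finitely many associated points, so $\associated(X)\cap Z$ is locally finite and $T_Z$ is a locally finite, hence closed, union of closed subsets.

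The heart of the proof is the equality $\overline{V}=\overline{U}$, which I would establish by a direct comparison of defining ideals, working affine-locally.  On $X=\spec A$ with $A$ Noetherian and $Z$ defined by a principal ideal $(f)\subseteq A$, fix a primary decomposition $(0)=\bigcap_i\mathfrak{q}_i$ with $\sqrt{\mathfrak{q}_i}=\mathfrak{p}_i$, so that $\associated(A)=\{\mathfrak{p}_1,\dots,\mathfrak{p}_n\}$.  By the proof of \cref{prop:1r-loc-ppal-blowup}, the ideal of $\overline{U}$ is $\ker(A\to A_f)=\bigcap_{f\notin\mathfrak{p}_i}\mathfrak{q}_i$.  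On the other hand, the closed subset $T_Z$ is cut out by the radical ideal $\mathfrak{r}=\bigcap_{\mathfrak{p}_j\in\associated(A)\cap Z}\mathfrak{p}_j$, so the ideal of $\overline{V}$ equals $\ker(A\to\Gamma(V,\mathscr{O}_X))=\bigcap_{\mathfrak{p}_i\not\supseteq\mathfrak{r}}\mathfrak{q}_i$, by the standard description of the local cohomology module $H^0_{\mathfrak{r}}(A)$ via primary decomposition in the Noetherian case.

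The main (and essentially only) obstacle is matching the two index sets, a short prime-avoidance argument: for an associated prime $\mathfrak{p}_i$, the inclusion $\mathfrak{p}_i\supseteq\mathfrak{r}$ is equivalent to $\mathfrak{p}_i\supseteq\mathfrak{p}_j$ for some $\mathfrak{p}_j\in\associated(A)\cap Z$, and since $Z$ is closed and $\mathfrak{p}_i$ is itself an associated prime, this happens precisely when $\mathfrak{p}_i\in Z$, i.e., when $f\in\mathfrak{p}_i$.  Granted this, the two ideals coincide and the theorem follows from \cref{prop:1r-loc-ppal-blowup}.
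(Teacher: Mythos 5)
Your proof is correct, and it establishes the key equality \(\overline{V}=\overline{U}\) by a genuinely different route from the paper's. Both arguments share the same skeleton---check that \(V\) is open because \(\associated(X)\) is locally finite, then reduce to \cref{prop:1r-loc-ppal-blowup} by identifying \(\overline{V}\) with \(\overline{U}\)---but where you compute both defining ideals affine-locally from a fixed primary decomposition and match the index sets \(\{i\,:\,f\notin\mathfrak{p}_i\}=\{i\,:\,\mathfrak{p}_i\not\supseteq\mathfrak{r}\}\), the paper never writes down an ideal: it observes that \(\associated(\overline{V})\subseteq \associated(X)\cap V=\associated(X)\cap U\) (the equality is exactly how \(T_Z\) was designed, using that \(Z\) is closed) and invokes the criterion that an open subscheme of a locally Noetherian scheme containing every associated point is scheme theoretically dense; hence \(U\) is schematically dense in \(\overline{V}\) and \(\overline{U}=\overline{V}\) by transitivity of schematic images. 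Your version is more elementary and makes the answer explicit, namely \(\bigcap_{f\notin\mathfrak{p}_i}\mathfrak{q}_i\), at the cost of choosing a primary decomposition and of two routine points worth recording: that \(T_Z\cap\s{A}\) is cut out by \(\mathfrak{r}\) (an associated point of \(X\) whose closure meets the affine open already lies in it, being the generic point of that closure), and that the affine-local identifications of the two quasi-coherent kernel ideals glue, which is automatic. The paper's version is shorter and coordinate-free, but the EGA inputs it cites (\(\associated(\overline{V})\subseteq\associated(X)\cap V\) and the schematic-density criterion) encode essentially the same primary-decomposition facts you verify by hand.
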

\begin{proof}
  First of all, the subset \(T_Z\) of \(X\) is closed because its intersection with every Notherian affine open subscheme of \(X\) is a union of finitely many closed subsets (see~\cite[\stacks{05AF}]{stacks-project} or \cite[IV$_2$ Chapitre IV, Proposition 3.1.6, p.37]{EGA-all}).
  Hence \(V\) is an open subscheme of \(X\).

  Let \(U\) be the open complement of \(Z\) and \(\overline{U}\) its schematic closure.
  Since \(T_Z\) is a closed subset of \(Z\), \(U\) is an open subscheme of \(V\) and of \(\overline{V}\).
  We show that \(U\tohook \overline{V}\) is schematically dense, then the claim follows from \cref{prop:1r-loc-ppal-blowup}.

  By definition of \(T\), \(Ass(X)\cap U = Ass(X)\cap V\) and, by \cite[IV$_2$ Chapitre IV, Proposition 3.1.13, p.39]{EGA-all}, \(Ass(\overline{V})\subseteq Ass(X)\cap V\).
  So, \(Ass(\overline{V})\subseteq U\) and then \(U\) is a schematically dense subscheme of \(\overline{V}\) (see~\cite[IV$_3$ Chapitre IV, Proposition 11.10.10, p.172]{EGA-all}).
\end{proof}



\begin{lemma}\label{lem:associated-pt-of-product-over-flat-base}
  Let \(S\) be a locally Noetherian ground scheme.
  Let \(X\overset{f}{\to} S\) and \(Y\overset{g}{\to} S\) be locally Noetherian \(S\)-schemes.
  Let \(\eta\in Ass(X)\), set \(s=f(\eta)\in S\) and consider the following Cartesian diagram.
  \[\begin{tikzcd}
      (Y_s)_{\eta} \rar{} \dar{} \ulcorn{dr} & \{\eta\} \dar{} \\
      Y_s \rar{} & \{s\}\\
    \end{tikzcd}
  \]\ncd Assume that \(g\) is flat and with geometrically integral fibres.
  Then, the scheme \((Y_s)_{\eta}\) is integral and its generic point is mapped to an associated point \(\xi_{\eta}\) of \(X\times_SY\) by the natural monomorphism \((Y_s)_{\eta}\tohook X\times_SY\).
  Furthermore, the map sending \(\eta\in Ass(X)\) to \(\xi_{\eta}\in Ass(X\times_SY)\) is a one-to-one correspondence, which preserves specialisations.
\end{lemma}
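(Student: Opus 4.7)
The plan is to work with the second projection \(\pi\from X\times_SY\to X\), which is flat as base change of the flat morphism \(g\), and whose scheme theoretic fibre over \(\eta\) is canonically identified with \((Y_s)_\eta\); then to exploit (i) geometric integrality of the fibres of \(g\), (ii) the compatibility of associated points with flat base change, and (iii) going down for flat morphisms.

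First, I would identify \((Y_s)_\eta\) with the fibre of \(\pi\) over \(\eta\). Indeed the natural map \(\{\eta\}\to S\) factors through \(\{s\}\), so
\[
(Y_s)_\eta \;\cong\; Y_s\times_{\{s\}}\{\eta\} \;\cong\; Y\times_S\{\eta\} \;\cong\; (X\times_SY)\times_X\{\eta\},
\]
and this realises \((Y_s)_\eta\tohook X\times_SY\) as the set theoretic fibre of \(\pi\) over \(\eta\) with its scheme structure. Since \(Y_s\) is geometrically integral over \(\kappa(s)\) and \(\kappa(s)\hookrightarrow\kappa(\eta)\) is a field extension, the base change \(Y_s\otimes_{\kappa(s)}\kappa(\eta)\) remains integral, which settles the first assertion and in particular identifies a canonical generic point \(\xi_\eta\in X\times_SY\).

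Next I would invoke the standard associated points formula for flat morphisms (see \cite[\stacks{05DF}]{stacks-project} or \cite[IV\(_2\) Chapitre IV, Proposition~3.3.1]{EGA-all}): for the flat morphism \(\pi\from X\times_SY\to X\), a point of \(X\times_SY\) lies in \(\associated(X\times_SY)\) if and only if it projects to a point \(\eta\in\associated(X)\) and becomes an associated point of the corresponding fibre of \(\pi\). By Step~1 each such fibre \((Y_s)_\eta\) is integral, hence has a unique associated point, namely its generic point \(\xi_\eta\). This simultaneously shows that \(\xi_\eta\in\associated(X\times_SY)\), that every element of \(\associated(X\times_SY)\) arises this way, and that distinct \(\eta\) yield distinct \(\xi_\eta\) (they have distinct projections to \(X\)); so the map \(\eta\mapsto\xi_\eta\) is a bijection.

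Finally, for preservation of specialisations, suppose \(\eta\rightsquigarrow\eta'\) in \(X\), and consider the points \(\xi_\eta\) and \(\xi_{\eta'}\) above. Going down for the flat morphism \(\pi\) (see~\cite[\stacks{00HS}]{stacks-project} or \cite[IV\(_2\) Chapitre IV, Théorème~2.3.4]{EGA-all}) produces a point \(\xi\in X\times_SY\) with \(\pi(\xi)=\eta\) and \(\xi\rightsquigarrow\xi_{\eta'}\). But then \(\xi\) lies in the integral fibre \((Y_s)_\eta\), whose generic point is \(\xi_\eta\), so \(\xi_\eta\rightsquigarrow\xi\); combining, \(\xi_\eta\rightsquigarrow\xi_{\eta'}\). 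The converse implication follows by applying \(\pi\), so the correspondence is compatible with specialisation in both directions.

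I expect the main point requiring care to be the precise form of the associated points formula: one must use the flat base change statement that recovers \(\associated\) of the total space from \(\associated\) of the source together with \(\associated\) of the fibres, and here only the fact that geometrically integral fibres remain integral after the residue field extension at \(\eta\) is what collapses the fibre contribution to a single generic point. The remaining work is organisational.
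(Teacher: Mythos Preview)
Your argument is correct. For the bijection you use essentially the same EGA input as the paper (the description of \(\associated\) of the total space of a flat morphism in terms of \(\associated\) of the base and of the fibres), but you organise it more efficiently: you work directly with the flat projection \(\pi\from X\times_SY\to X\) and observe that each fibre \((Y_s)_\eta\) is integral, hence contributes a single associated point. The paper instead invokes EGA~IV\(_2\), Proposition~3.3.6 in a two-step form, passing through the generic point \(\mu\) of \(Y_s\) and the ring \(\kappa(\eta)\otimes_{\kappa(s)}\kappa(\mu)\); this amounts to decomposing the fibre once more, which is unnecessary once one knows the fibre is integral.

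The genuine methodological difference is in the specialisation step. The paper argues via schematic images: it identifies \(\overline{\{\xi_{\eta'}\}}\) with \(Y\times_S\overline{\{\eta'\}}\) using that schematic image commutes with flat base change (the paper's Lemma on flat base change of schematic images), and then exhibits a morphism \((Y_s)_\eta\to Y\times_S\overline{\{\eta'\}}\). Your route through going-down for the flat morphism \(\pi\) is shorter and more elementary: lift the specialisation \(\eta\rightsquigarrow\eta'\) to some \(\xi\rightsquigarrow\xi_{\eta'}\) with \(\pi(\xi)=\eta\), and then use that \(\xi_\eta\) is generic in the integral fibre to get \(\xi_\eta\rightsquigarrow\xi\). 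Both arguments are valid; yours avoids the schematic-image machinery entirely, while the paper's approach has the mild advantage of identifying the closure \(\overline{\{\xi_\eta\}}\) explicitly as \(Y\times_S\overline{\{\eta\}}\), a description that is occasionally useful elsewhere.
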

\begin{proof}
  The scheme \(Y_s\) is integral because we assume \(g\) with geometrically integral fibres.
  Denote the generic point of \(Y_s\) by \(\mu\) and denote by \(I_{\eta}\) the image of \(Ass(\s{\kappa(\eta)\otimes_{\kappa(s)}\kappa(\mu)})\) by the natural monomorphism \(\s{\kappa(\eta)\otimes_{\kappa(s)}\kappa(\mu)}\tohook X\times_SY\).
  By \cite[IV$_2$, Chapter IV, Proposition 3.3.6, p.44]{EGA-all},
  \[
    Ass(X\times_SY) = \bigcup_{\eta\in Ass(X)}I_{\eta}.
  \] Observe that the natural monomorphism \(\s{\kappa(\eta)\otimes_{\kappa(s)}\kappa(\mu)}\tohook X\times_SY\) factorises as
  \[
    \s{\kappa(\eta)\otimes_{\kappa(s)}\kappa(\mu)}\tohook (Y_s)_{\eta}\tohook X\times_SY
  \] Moreover, again by \cite[IV$_2$, Chapter IV, Proposition 3.3.6, p.44 or Corollaire 3.3.7, p.45]{EGA-all}, the associated points of \(\s{\kappa(\eta)\otimes_{\kappa(s)}\kappa(\mu)}\) are mapped to associated points of \((Y_s)_{\eta}\), which is integral because we assume \(g\) with geometrically integral fibres.
  Hence, there is a unique point in \(Ass(\s{\kappa(\eta)\otimes_{\kappa(s)}\kappa(\mu)})\) and \(\xi_{\eta}\in Ass(X\times_SY)\) is its image to \(X\times_SY\).
  \medskip

  Now, we sketch the proof that the map sending \(\eta\in Ass(X)\) to \(\xi_{\eta}\in Ass(X\times_SY)\) preserves specialisations.
  Fix another \(\eta'\in Ass(X)\), set \(s'=f(\eta')\) and assume that \(\eta\) is a specialisation of \(\eta'\), that is \(\eta\in \overline{\{\eta'\}}\).
  By transitivity of schematic images (see \cite[I, Chapitre I, Proposition 9.5.5, p.177]{EGA-all}), there is a morphism \(\overline{\{\eta'\}}\to \overline{\{s'\}}\) such that
  \begin{equation}\label{eq:ext-ass-pt-respects-specialisations}
    Y\times_S \overline{\{\eta'\}} = \Bigl(Y\times_S \overline{\{s'\}}\Bigr)\times_{\overline{\{s'\}}} \overline{\{\eta'\}}.
  \end{equation} From \cref{eq:ext-ass-pt-respects-specialisations} and \cref{lem:sch-image-commutes-with-flat-bc} follows that the schematic image of the generic point of \((Y_{s'})_{\eta'}\) in \(X\times_SY\) is \(Y\times_S \overline{\{\eta'\}}\), which is also the schematic image of \(\{\xi_{\eta'}\}\).
  Now, by \cref{eq:ext-ass-pt-respects-specialisations} and the morphisms \(\{\eta\}\to \overline{\{\eta'\}}\) and \(\{s\}\to \overline{\{s'\}}\), there is a morphism \((Y_s)_{\eta}\to Y\times_S \overline{\{\eta'\}}\), which implies that \(\xi_{\eta}\in \overline{\{\xi_{\eta'}\}}\).
\end{proof}

\begin{remark}\label{rmk:projecting-xi-eta}
  Recall that the image of \(\xi_{\eta}\) by the projection \(X\times_SY\to X\) is \(\eta\).
\end{remark}

\begin{lemma}\label{lem:product-form-1}
  Let \(S\) be a ground scheme.
  Let \(Y\to S\) an fpqc morphism.
  Let \(X\) be an \(S\)-scheme and \(i\from W\tohook X\) a closed embedding.
  Let \(h'\from T\to X\) be an \(S\)-morphisms.
  Let \(\varphi\from T\times_S Y\to W\times_SY\) be a morphism such that the following diagram commutes.
  \[\begin{tikzcd}
      T\times_S Y \rar{\varphi} \drar[swap]{h'_Y} & W\times_SY \dar[hook]{i_Y} \\
      & X\times_SY \\
    \end{tikzcd}
  \]\ncd
  Then, there is a unique morphism \(h\from T\to W\) such that \(\varphi=h_Y\).
\end{lemma}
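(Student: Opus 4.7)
The plan is to reduce this to \cref{prop:const-along-fibres-and-rel-const} applied to the projection $p\from T\times_S Y\to T$ (which is fpqc since fpqc morphisms are stable under base change) and to the morphism $\psi\from T\times_S Y\to W$ defined as the composition of $\varphi$ with the projection $W\times_S Y\to W$. Once I produce a unique $h\from T\to W$ with $\psi=h\circ p$, the equality $\varphi=h_Y$ will follow by comparing the two components in $W\times_S Y$, after noting that $\varphi$ is automatically a morphism over $Y$.

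The heart of the argument is showing that $\psi$ is constant along the fibres of $p$. Post-composing the hypothesis $i_Y\circ \varphi=h'_Y$ with the projection $X\times_SY\to X$ yields $i\circ \psi=h'\circ p$. The right-hand side is trivially constant along the fibres of $p$ because it factors through $p$, and since $i$ is a closed embedding (hence a monomorphism), \cref{rmk:const-along-stable-by-postcomposition} allows us to descend this constancy back to $\psi$. Then \cref{prop:const-along-fibres-and-rel-const} furnishes the desired unique $h\from T\to W$ with $\psi=h\circ p$.

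To conclude, I would check that $\varphi$ is a morphism of $Y$-schemes: applying the projection $\mathrm{pr}_Y$ to both sides of $i_Y\circ \varphi=h'_Y$ gives $\mathrm{pr}_Y\circ \varphi=\mathrm{pr}_Y$, since both $i_Y$ and $h'_Y$ are obtained by base change from morphisms to $X$ and therefore commute with $\mathrm{pr}_Y$. Hence $\varphi$ is determined by its two components in $W\times_S Y$, and both of them coincide with the corresponding components of $h_Y$ by the defining equation $\psi=h\circ p$; consequently $\varphi=h_Y$. Uniqueness of $h$ is inherited from \cref{prop:const-along-fibres-and-rel-const} (any alternative $h''$ with $h''_Y=\varphi$ would yield $h''\circ p=\psi$, forcing $h''=h$). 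I do not anticipate a genuine obstacle; the only conceptual point to keep in mind is that recognising the hypothesis $i_Y\circ \varphi=h'_Y$ as precisely what is needed to apply the constant-along-the-fibres descent of \cref{prop:const-along-fibres-and-rel-const}, with $i$ being a monomorphism playing the crucial role in transferring constancy from $i\circ \psi$ back to $\psi$.
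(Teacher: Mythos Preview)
Your proposal is correct and follows essentially the same route as the paper: reduce to \cref{prop:const-along-fibres-and-rel-const} by composing with the projection to \(W\), use that \(i\) is a monomorphism via \cref{rmk:const-along-stable-by-postcomposition} to obtain constancy along the fibres of \(T_Y\to T\), and then identify \(\varphi\) with \(h_Y\). The only cosmetic difference is that the paper phrases the final identification as a ``two-out-of-three'' Cartesian-square argument rather than checking the two components into \(W\times_SY\) separately, but these are the same verification.
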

\begin{proof}
  Denote by \(p_T\from T\times_SY\to T\), \(p_X\from X\times_SY\to X\) and \(p_W\from W\times_SY\to W\) the projections.
  Since the following diagram commutes,
  \[\begin{tikzcd}
      T\times_SY \dar{p_T} \rar{i_Y\circ\varphi} & X\times_SY \rar{p_X} & X \\
 T \ar[urr,swap,bend right=10,"h'"] \\
\end{tikzcd}
\]\ncd the morphism \(p_X\circ i_Y\circ\varphi\) is constant along the fibres of \(p_T\).
Then, since \(p_X\circ i_Y=i\circ p_W\) and \(i\) is a monomorphism, by \cref{rmk:const-along-stable-by-postcomposition}, the morphism \(p_W\circ\varphi\) is constant along the fibres of \(p_T\).
By \cref{prop:const-along-fibres-and-rel-const}, there is a unique morphism \(h\from T\to W\) such that \(h\circ p_T=p_W\circ\varphi\).
Consider the following diagram.
\[\begin{tikzcd}
    T_Y \rar{\varphi} \dar{p_T} & W_Y \rar{} \dar{p_W} \ulcorn{dr} & Y \dar{} \\
    T \rar{h} & W \rar{} & S \\
\end{tikzcd}
\]\ncd Since it commutes and both the right hand and the big squares are Cartesian, so is the left hand.
Hence, \(\varphi= h_X\).
\end{proof}

\begin{theorem}\label{thr:blowup-locally-ppal-product-form}
  Let \(S\) be a locally Noetherian ground scheme.
  Let \(X\overset{f}{\to} S\) and \(Y\overset{g}{\to} S\) be locally Noetherian \(S\)-schemes.
  Let \(Z\) be a locally principal subscheme of \(X\times_SY\).
  Assume that \(Y\overset{g}{\to} S\) is flat and with geometrically integral fibres.
  Then, there is a closed subscheme \(i\from W\to X\) such that the closed embedding \(i_Y\from W\times_SY\to X\times_SY\) is the blow up of \(X\times_SY\) along \(Z\).

  If furthermore \(Y\to S\) is an fpqc morphism, for every \(S\)-scheme \(T\overset{h'}{\to} S\) for which the preimage of \(Z\) by \(h'_X\from T\times_SY \to X\times_SY\) is an effective Cartier divisor, there is a unique morphism \(h\from T\to W\) such that \(i\circ h'= h\).
  Moreover, \(h_X\from T\times_SY\to W\times_SY\) is the morphism given the universal property of the blow up \(i_Y\).
\end{theorem}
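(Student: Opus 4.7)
The plan is to use \cref{thr:2n-loc-ppal-blowup} to identify the blow up of $X\times_SY$ along $Z$ as a schematic closure, then use \cref{lem:associated-pt-of-product-over-flat-base} to transport the description back to $X$, and finally invoke \cref{lem:product-form-1} for the universal property.

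For the existence of $W$, I would first apply \cref{thr:2n-loc-ppal-blowup} to $Z\subseteq X\times_SY$: let
$T_Z\subseteq X\times_SY$ be the union of the closures $\overline{\{\xi\}}$ for $\xi\in \associated(X\times_SY)\cap Z$, and let $V'$ be its open complement. Then the blow up is the schematic closure $\overline{V'}\tohook X\times_SY$. By \cref{lem:associated-pt-of-product-over-flat-base}, there is a bijection $\eta\mapsto \xi_\eta$ between $\associated(X)$ and $\associated(X\times_SY)$ preserving specialisations, and by the same lemma together with \cref{rmk:projecting-xi-eta} the schematic closure $\overline{\{\xi_\eta\}}$ coincides with $Y\times_S\overline{\{\eta\}}$. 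Let $A_Z\subseteq \associated(X)$ be those $\eta$ with $\xi_\eta\in Z$, let $T_X=\bigcup_{\eta\in A_Z}\overline{\{\eta\}}$, which is closed (finiteness on affines is argued as in the proof of \cref{thr:2n-loc-ppal-blowup}), and let $V=X\setminus T_X$. Then, set-theoretically, $T_Z$ is the preimage of $T_X$ under the projection $p_X\from X\times_SY\to X$, so $V'=V\times_SY$.

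Define $W$ as the schematic closure of the open embedding $V\tohook X$. Since $Y\to S$ is flat, so is $p_X$, and \cref{lem:sch-image-commutes-with-flat-bc} then says the schematic image of $V\times_SY=p_X^{-1}(V)\tohook X\times_SY$ is precisely $W\times_SY$. Combining with the previous paragraph, $W\times_SY\tohook X\times_SY$ is the blow up of $X\times_SY$ along $Z$, as required.

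For the universal property, let $h'\from T\to X$ be an $S$-morphism whose base change $h'_Y\from T\times_SY\to X\times_SY$ pulls $Z$ back to an effective Cartier divisor of $T\times_SY$. By the universal property of the blow up $i_Y$, there is a unique $\varphi\from T\times_SY\to W\times_SY$ with $i_Y\circ \varphi=h'_Y$. Since $Y\to S$ is assumed fpqc, \cref{lem:product-form-1} applies (with $h'$ and $\varphi$ as above) and produces a unique $h\from T\to W$ with $\varphi=h_Y$. Base changing the equation $i_Y\circ h_Y=h'_Y$, we get $(i\circ h)_Y=h'_Y$, and faithful flatness of $Y\to S$ (descent of morphisms along an fpqc cover, as in \cref{prop:const-along-fibres-and-rel-const}) forces $i\circ h=h'$. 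Uniqueness of $h$ is inherited from that of $\varphi$ via \cref{lem:product-form-1}.

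The main obstacle I foresee is the identification $T_Z=p_X^{-1}(T_X)$ set-theoretically together with the verification that the schematic image of $V\times_SY$ in $X\times_SY$ equals $W\times_SY$: the interplay between the two one-to-one correspondences (for associated points and for their closures via $\overline{\{\xi_\eta\}}=Y\times_S\overline{\{\eta\}}$) is where the geometric integrality and flatness of $Y\to S$ do the real work, and one must be careful that no associated point of $X\times_SY$ is overlooked when passing from \(\associated(X)\) to \(\associated(X\times_SY)\).
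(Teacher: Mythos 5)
Your proposal is correct and follows essentially the same route as the paper: identify the blow up via \cref{thr:2n-loc-ppal-blowup}, define \(W\) as the schematic closure of the complement in \(X\) of the projected bad locus, transfer the closure across the projection using \cref{lem:associated-pt-of-product-over-flat-base} and \cref{lem:sch-image-commutes-with-flat-bc}, and deduce the universal property from the blow up's universal property plus \cref{lem:product-form-1}. The only (harmless) divergence is that you assert the set-theoretic equality \(T_Z=p_X^{-1}(T_X)\) outright, which rests on the identity \(\overline{\{\xi_\eta\}}=Y\times_S\overline{\{\eta\}}\) established inside the \emph{proof} of \cref{lem:associated-pt-of-product-over-flat-base} rather than in its statement, whereas the paper only uses the inclusion \(V\times_SY\subseteq U\) and then equates the schematic closures by comparing the associated points of the two open subschemes.
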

\begin{proof}
  Let \(\Omega\) denote the set of points \(\xi\in Ass(X\times_SY)\) such that \(\xi\in Z\).
  By \cref{thr:2n-loc-ppal-blowup}, the blow up of \(X\times_S Y\) along \(Z\) is the schematic closure of the open subscheme \(U\tohook X\times_SY\) complement of the closed subset
  \[
    T_Z=\bigcup_{\xi\in\Omega}\overline{\{\xi\}}.
  \]
  Let \(p\from X\times_SY\to X\) be the projection and denote by \(V\) the open subscheme of \(X\) complement of the closed subset
  \[
    \bigcup_{\xi\in\Omega} \overline{\{p(\xi)\}}.
  \] We claim that the schematic closure of the open embedding \(V\tohook X\) is the desired closed subscheme \(W\) of \(X\).
  Let us check it.
  Observe that, since \(g\) is assumed flat, by \cref{lem:sch-image-commutes-with-flat-bc}, the schematic closure of the open embedding \(V\times_SY\tohook X\times_SY\) is \(\overline{V}\times_SY\).
  An associated point \(\eta\) of \(U\) is a point \(\eta\in Ass(X\times_SY)\) such that \(\eta\not\in \overline{\{\xi\}}\) for all \(\xi\in\Omega\).
  Since the one-to-one correspondence between \(Ass(X)\) and \(Ass(X\times_SY)\) respects specialisations, this is equivalent to \(p(\eta)\not\in\overline{\{p(\xi)\}}\) for all \(\xi\in\Omega\), which is equivalent to \(\eta\in p^{-1}(V)=V\times_SY\).
  Hence, \(Ass(U)=Ass(V\times_SY)\).
  Since \(\xi\in p^{-1}(p(\xi))\), the scheme \(V\times_SY\) is an open subscheme of \(U\) and then the schematic closures of \(U\) and \(V\times_SY\) in \(X\times_SY\) are equal (see \cite[IV$_2$, Chapitre IV, Proposition 3.1.13, p.39 and IV$_3$, Chapitre IV, Proposition 11.10.10, p.172]{EGA-all} or \cite[\stacks{083P}]{stacks-project}).
  \medskip

  Assume that \(Y\to S\) is an fpqc morphism and consider such an \(S\)-scheme \(T\).
  By the universal property of the blow up \(i_Y\), there is a unique morphism \(\varphi\from T\times_SY\to W\times_SY\) such that \(i_Y\circ \varphi=h'_X\).
  Now, the claim follows from \cref{lem:product-form-1}.
\end{proof}

\begin{remark}\label{rmk:geometrically-integral-fibres-required}
  If the assumption \(Y\to S\) with geometrically integral fibres fails, then there is a point \(s\) of \(S\) and a field extension \(\kappa(s)\tohook K\) such that \((Y_s)_K\) is not integral.
  Setting \(X= \s{K}\), the scheme \(X\times_SY\) is \((Y_s)_K\) and it has at least one locally principal subscheme \(Z\), which is not an effective Cartier divisor.
  Hence, the blow up of \(X\times_SY\) along \(Z\) is not an isomorphism and, if it is not the empty scheme (otherwise \cref{thr:blowup-locally-ppal-product-form} is trivial), there is no closed subscheme \(W\) of \(X\) such that \(W\times_SY\tohook X\times_SY\) is such a blow up.
\end{remark}

\section{The constfy closed subscheme}
\label{sec:constfy}

Let \(S\) be a ground scheme.
Let \(p\from X\to Y\) and \(f\from X \to W\) be \(S\)-morphisms.
We study \(S\)-morphisms \(T\to Y\) for which the restriction of \(f\) to \(X_T\) is constant along the fibres of the projection \(X_T\to T\).
\cref{thr:constantfying-closed}, an immediate consequence of \cref{thr:closed-isofying}, shows that they form a category with a final object, which is a closed subscheme of \(Y\).
We called it the \(f\)-constfy closed subscheme of \(Y\) (see~\cref{def:f-constfy}).

To study this category, we use the functor \(\fiso\) (see~\cref{def:isofy-functor}).
The representability of this functor has been studied in the literature, but explicit constructions for the representing scheme are lacking.
So, we also introduce the class of \(\aleph_1\)-morphisms (see~\cref{def:Mittag-Leffler,def:aleph-1-homomorphism,def:aleph-1-morphisms}), which allow an explicit description for the representing scheme (see~\cref{thr:closed-isofying}).

\begin{definition}\label{def:Mittag-Leffler}
  Let \(R\) be a ring.
  An \(R\)-module \(M\) is \emph{Mittag-Leffler} if the natural homomorphism
\[
  \rho\from M\otimes_R\prod_{i\in I}Q_i\to \prod_{i\in I}M\otimes_RQ_i
\] is injective for every family of \(R\)-modules \((Q_i\,|\,i\in I)\).
\end{definition}

We are interested in Mittag-Leffler modules which moreover are flat.
In \cite{herbera-trlifaj-2012-M--L-conditions}, there is a complete characterisation of such modules as \(\aleph_1\)-projective modules, which motivates the following definition (see \cite[Corollary 2.7, p.3443 and Corollary 2.10, p.3444]{herbera-trlifaj-2012-M--L-conditions}).

\begin{definition}\label{def:aleph-1-homomorphism}
  We say that an homomorphism \(\varphi\from A\to B\) is \emph{\(\aleph_1\)-projective} if \(B\) is a flat and Mittag-Leffler \(A\)-module via \(\varphi\).
\end{definition}

\begin{lemma}\label{lem:extending-ideals-to-nice-tensor-product}
  Let \(A\to B\) be an \(\aleph_1\)-projective homomorphism.
  Then, for every family of ideals \(\{\mathfrak{a}_{\lambda}\}_{\lambda\in\Lambda}\) of \(A\),
  \[
    B\cdot\bigcap_{\lambda\in\Lambda}\mathfrak{a}_\lambda =\bigcap_{\lambda\in\Lambda}B\cdot\mathfrak{a}_{\lambda}.
  \]
\end{lemma}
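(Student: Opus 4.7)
The inclusion \(B\cdot\bigcap_{\lambda}\mathfrak{a}_\lambda\subseteq\bigcap_{\lambda}B\cdot\mathfrak{a}_\lambda\) is immediate from \(\bigcap_\lambda\mathfrak{a}_\lambda\subseteq\mathfrak{a}_\lambda\) for each \(\lambda\), so the content is in the reverse inclusion. My plan is to realise both sides as kernels of naturally isomorphic maps, using flatness to move the intersection outside a tensor product and Mittag-Lefflerness to move it outside a product.

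Concretely, I would start from the tautological exact sequence
\[
0\longto \bigcap_{\lambda\in\Lambda}\mathfrak{a}_\lambda\longto A\longto \prod_{\lambda\in\Lambda}A/\mathfrak{a}_\lambda,
\]
where the last arrow is the diagonal map \(a\mapsto(a+\mathfrak{a}_\lambda)_{\lambda\in\Lambda}\), whose kernel is by definition \(\bigcap_\lambda\mathfrak{a}_\lambda\). Since \(B\) is flat over \(A\), applying \(B\otimes_A-\) preserves this exactness, giving
\[
0\longto B\otimes_A\Bigl(\bigcap_{\lambda}\mathfrak{a}_\lambda\Bigr)\longto B\longto B\otimes_A\prod_{\lambda}A/\mathfrak{a}_\lambda,
\]
and moreover the image of the first arrow in \(B\) is exactly \(B\cdot\bigcap_\lambda\mathfrak{a}_\lambda\) (again by flatness, since \(B\otimes_A\mathfrak{b}\to B\) is injective with image \(B\cdot\mathfrak{b}\) for every ideal \(\mathfrak{b}\) of \(A\)).

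The Mittag-Leffler hypothesis provides, with \(Q_\lambda=A/\mathfrak{a}_\lambda\), that the natural map
\[
\rho\from B\otimes_A\prod_{\lambda}A/\mathfrak{a}_\lambda\longto \prod_{\lambda}B\otimes_A(A/\mathfrak{a}_\lambda)=\prod_{\lambda}B/B\cdot\mathfrak{a}_\lambda
\]
is injective. Composing with \(\rho\) does not enlarge the kernel, so the kernel of the composition \(B\to\prod_\lambda B/B\cdot\mathfrak{a}_\lambda\) equals \(B\cdot\bigcap_\lambda\mathfrak{a}_\lambda\). But this kernel is manifestly \(\bigcap_\lambda B\cdot\mathfrak{a}_\lambda\), which yields the reverse inclusion and hence the equality.

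The main (and really only) obstacle is the switch between \(B\otimes_A\prod_\lambda A/\mathfrak{a}_\lambda\) and \(\prod_\lambda B/B\cdot\mathfrak{a}_\lambda\): for a general flat \(B\) tensor does not commute with arbitrary products, and it is precisely the Mittag-Leffler condition in \cref{def:Mittag-Leffler} that supplies the required injectivity. Everything else reduces to standard bookkeeping with flat base change and the identification of \(B\otimes_A\mathfrak{b}\) with the extended ideal \(B\cdot\mathfrak{b}\).
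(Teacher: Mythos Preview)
Your argument is correct and follows essentially the same route as the paper: use flatness to tensor the exact sequence \(0\to\bigcap_\lambda\mathfrak{a}_\lambda\to A\to\prod_\lambda A/\mathfrak{a}_\lambda\), identify the kernel with \(B\cdot\bigcap_\lambda\mathfrak{a}_\lambda\), and then use the Mittag-Leffler injectivity of \(\rho\) to replace the kernel of \(\alpha\) with the kernel of \(\rho\circ\alpha=\bigcap_\lambda B\cdot\mathfrak{a}_\lambda\). The only difference is expository detail; the mathematical content is identical.
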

\begin{proof}
Since \(B\) is a flat \(A\)-module, the following sequence is exact.
\[\begin{tikzcd}
    0 \rar[hook]{} & B\otimes_A\bigcap_{\lambda\in \Lambda} \mathfrak{a}_{\lambda} \rar[hook]{} & B\otimes_AA \rar{\alpha} & B\otimes_A\displaystyle\prod_{\lambda\in\Lambda}A/\mathfrak{a}_{\lambda} \\
\end{tikzcd}
\]\ncd So, \(B\cdot\cap_{\lambda}\mathfrak{a}_{\lambda}=\ker(\alpha)\).
Now, since \(B\) is a Mittag-Leffler \(A\)-module, the natural homomorphism
\[
  \rho\from B\otimes_A\prod_{\lambda\in\Lambda}A/\mathfrak{a}_{\lambda}\to \prod_{\lambda\in\Lambda} B\otimes_AA/\mathfrak{a}_{\lambda}
\] is injective.
Hence, \(\ker(\alpha)=\ker(\rho\circ\alpha)=\cap_{\lambda}B\cdot\mathfrak{a}_{\lambda}\).
\end{proof}

\begin{definition}\label{def:aleph-1-morphisms}
  Let \(f\from X\to Y\) be morphism.
  An \emph{\(\aleph_1\)-projective covering of \(f\)} is a couple \((\mathcal{U},\mathcal{V})\) where \(\mathcal{U}=\{U_i\}_i\) is an affine open cover of \(Y\) and \(\mathcal{V}=\{V_{i,j}\}_{i,j}\) is a collection of  affine open covers \(\{V_{i,j}\}_j\) of \(f^{-1}(U_i)\) for ever \(i\), such that for every \(i,j\) the homomorphism corresponding to \(V_{i,j}\to U_i\) is \(\aleph_1\)-projective.
  We say that \(f\from X\to Y\) is \emph{\(\aleph_1\)-projective},
  if it admits an \(\aleph_1\)-projective covering.
\end{definition}

\begin{example}\label{rmk:example-1-of-aleph-1-morphisms}
  Let \(\field\) be a field.
  Let \(X\), \(Y\) be \(\field\)-schemes, then the projection \(X\times_{\field} Y\to X\) is \(\aleph_1\)-projective.
  Fix affine covers \(\mathcal{U}=\{U_i\}\), \(\{V_j\}\) of \(X\), \(Y\) respectively.
  Then, the set \(\mathcal{V}=\{U_i\times V_j\}\) is an affine cover of \(X\times Y\) and the couple \((\mathcal{U},\mathcal{V})\) is an \(\aleph_1\)-projective covering of \(X\times Y\to X\).
  Let us check it.

  For every \(i,j\), the projection \(U_i\times V_j\to U_i\) corresponds to the natural homomorphism \(A\to A\otimes_{\field}B\) for some \(\field\)-algebras \(A\), \(B\).
  So, \(A\otimes_{\field}B\) is a free \(A\)-module and free modules are flat (well-known) and Mittag-Leffler (see~\cite[\stacks{059Q}]{stacks-project}).
\end{example}

\begin{example}\label{rmk:example-2-of-aleph-1-morphisms}
  For the same reason, an affine morphism \(f\from Z\to S\) such that the \(\mathscr{O}_S\)-module \(f_{*}\mathscr{O}_Z\) is locally free is \(\aleph_1\)-projective, and its pullbacks by a morphism of this same type is again \(\aleph_1\)-projective.
\end{example}

\begin{notation}\label{not:schematic-unions}
  Let \(X\) be a scheme.
  Consider a family of quasi-coherent \(\mathscr{O}_X\)-ideals  \(\{\mathscr{I}_l\}_l\) and its corresponding to a closed subschemes \(Y_l\)  of \(X\).
  We denote its schematic union by \(\Sigma_lY_l\).
  More precisely, the scheme \(\Sigma_lY_l\) is the closed subscheme of \(X\) corresponding to the quasi-coherent \(\mathscr{O}_X\)-ideal \(\bigcap_l\mathscr{I}_l\).
\end{notation}

\cref{prop:extending-arbitrary-families-of-cl-ssch} below is the main property for which we introduce \(\aleph_1\)-projective morphisms.
It asserts that arbitrary schematic unions of closed subscheme commute with \(\aleph_1\)-projective pullbacks.

\begin{proposition}\label{prop:extending-arbitrary-families-of-cl-ssch}
  Let \(X\to Y\) be an \(\aleph_1\)-projective morphism.
  Then, for every family \(\{Y_l\}_l\) of closed subscheme of \(Y\), the closed subschemes \(X_{\Sigma_l Y_l}\) and \(\Sigma_lX_{Y_l}\) of \(X\) are equal.
\end{proposition}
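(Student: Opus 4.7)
The plan is to reduce the global statement to the local ideal-theoretic statement of \cref{lem:extending-ideals-to-nice-tensor-product} by using an \(\aleph_1\)-projective covering \((\mathcal{U},\mathcal{V})\) of \(X\to Y\). Since the collection \(\mathcal{V}=\{V_{i,j}\}_{i,j}\) forms an affine open cover of \(X\) and closed subschemes of \(X\) are determined by their restrictions to any open cover, it suffices to prove that for each \(i,j\) the closed subschemes \(X_{\Sigma_l Y_l}\cap V_{i,j}\) and \((\Sigma_l X_{Y_l})\cap V_{i,j}\) of \(V_{i,j}\) coincide.

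Fix \(i,j\) and write \(U_i\cong \s{A}\), \(V_{i,j}\cong \s{B}\), so that the restriction \(V_{i,j}\to U_i\) corresponds to an \(\aleph_1\)-projective homomorphism \(\varphi\from A\to B\). For each \(l\), the intersection \(Y_l\cap U_i\) is the closed subscheme of \(\s{A}\) cut out by an ideal \(\mathfrak{a}_l\subseteq A\) (the evaluation of the sheaf of ideals \(\mathscr{I}_l\) on \(U_i\)). By \cref{not:schematic-unions}, the restriction \(\Sigma_l Y_l\cap U_i\) is then the closed subscheme corresponding to \(\bigcap_l \mathfrak{a}_l\). Pulling back a closed subscheme along an affine morphism corresponds to extending the defining ideal: \(X_{Y_l}\cap V_{i,j}\) is cut out by \(B\cdot \mathfrak{a}_l\subseteq B\), and similarly \(X_{\Sigma_l Y_l}\cap V_{i,j}\) is cut out by \(B\cdot \bigl(\bigcap_l \mathfrak{a}_l\bigr)\). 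Applying \cref{not:schematic-unions} once more on \(V_{i,j}\) gives that \((\Sigma_l X_{Y_l})\cap V_{i,j}\) is cut out by \(\bigcap_l B\cdot \mathfrak{a}_l\).

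The equality of the two ideals
\[
B\cdot \bigcap_{l}\mathfrak{a}_l \;=\; \bigcap_l B\cdot \mathfrak{a}_l
\]
is exactly the content of \cref{lem:extending-ideals-to-nice-tensor-product}, applied to the \(\aleph_1\)-projective homomorphism \(\varphi\from A\to B\). Hence the two closed subschemes agree on every \(V_{i,j}\), and therefore globally on \(X\).

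The potential subtlety is only in the gluing step: one must be sure that describing the schematic union and the pullback through ideals on each affine piece is compatible with the global description. This is straightforward because both operations are defined via quasi-coherent sheaves of ideals and are therefore of local nature on \(X\) (resp. on \(Y\)); no further commutation with \(\aleph_1\)-projective covers is needed beyond the affine-local computation above. The genuine content of the proposition is thus packed entirely into the ring-theoretic lemma, and the argument above is little more than its geometric translation.
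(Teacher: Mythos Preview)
Your proof is correct and follows essentially the same route as the paper: choose an \(\aleph_1\)-projective covering, reduce to the affine pieces \(V_{i,j}\to U_i\), translate both closed subschemes into the ideals \(B\cdot\bigcap_l\mathfrak{a}_l\) and \(\bigcap_l B\cdot\mathfrak{a}_l\), and invoke \cref{lem:extending-ideals-to-nice-tensor-product}. The extra paragraph on gluing is harmless commentary; no substantive difference from the paper's argument.
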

\begin{proof}
  Fix an \(\aleph_1\)-projective covering \((\{U_i\},\{V_{i,j}\})\) of \(X\to Y\).
  We check that for every \(i,j\) the closed subschemes \((X_{\Sigma_l Y_l})\cap V_{i,j}\) and \((\Sigma_lX_{Y_l})\cap V_{i,j}\) of \(V_{i,j}\) are equal.

  Fix \(i,j\) and denote respectively by \(A\) and \(B\) the rings of functions of \(U_i\) and \(V_{i,j}\).
  Every closed subscheme \(Y_l\cap U_i\) of \(U_i\) is given by an ideal \(\mathfrak{a}_l\) of \(A\).
  The closed subschemes \((X_{\Sigma_l Y_l})\cap V_{i,j}\) and \((\Sigma_lX_{Y_l})\cap V_{i,j}\) of \(V_{i,j}\) are given respectively by the ideals \(\cap _lB\cdot \mathfrak{a}_l\) and \(B\cdot \cap_l\mathfrak{a}_l\).
  But since \(B\) is an \(\aleph_1\)-projective \(A\)-module by assumption, by \cref{lem:extending-ideals-to-nice-tensor-product}, such ideals are equal.
\end{proof}

\begin{definition}\label{def:isofy-functor}
  Let \(p\from X\to Y\) and \(Z\to X\) be morphisms.
  We define \(\fiso_p^Z \from \sch_Y\to \set\) as the contravariant functor sending an \(Y\)-scheme \(T\to Y\) to
  \[
    \fiso_p^Z(T)=
    \begin{cases}
      \{*\} & \mbox{if }Z_T\to X_T \mbox{ is an isomorphism},\\
      \emptyset & \mbox{otherwise}.
    \end{cases}
 \] Since isomorphisms are stable by base change, it is well defined over morphisms.
\end{definition}

\begin{remark}\label{rmk:underlying-set-cl-ssch-isofy}
  If the functor \(\fiso_p^Z\) is representable by an open or closed subscheme \(Y'\) of \(Y\), the underlying set of \(Y'\) is
  \[
    \omega=\{y\in Y \mbox{ such that }Z_y\tohook X_y \mbox{ is an isomorphism}\}.
  \] If a point \(y\) of \(Y\) belongs to \(Y'\), then \(Z_y\tohook X_y\) is the base change of (the isomorphism) \(Z_{Y'}\tohook X_{Y'}\) by \(y\to Y'\), hence \(y\in \omega\).
  If \(y\in\omega\), then, by the universal property of the closed embedding \(Y'\tohook Y\), the morphism \(\{y\}\to Y\) factorises through \(Y'\tohook Y\).
  Hence, \(y\) belongs to \(Y'\).
\end{remark}

There are two main different cases when the representablility of the functor \(\fiso_p^Z\) has been studied.
We state them for the convenience of the reader.
\medskip

The following can be found in \cite[\stacks{07AI}]{stacks-project}.

\begin{theorem}\label{thr:isofy-representable-by-closed}
  Let \(p\from X\to Y\) be a morphism and \(Z\tohook X\) a closed embedding.
  If \(p\) is of finite presentation, flat, and pure, then \(\fiso_f^Z\) is representable and the representing scheme \(Y'\) is a closed subscheme of \(Y\).
  Moreover, if \(Z\to Y\) is of finite presentation, then so is \(Y'\tohook Y\).
\end{theorem}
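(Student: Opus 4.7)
The plan is to reduce the representability of \(\fiso_p^Z\) to a known flattening/vanishing-locus representability theorem for a well-chosen quasi-coherent sheaf on \(X\), and then transfer the resulting closed subscheme of \(Y\) back to \(\fiso_p^Z\).

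First I would translate the isomorphism condition into a vanishing condition. Let \(\mathscr{I}\subseteq \mathscr{O}_X\) be the quasi-coherent sheaf of ideals cutting out \(Z\) in \(X\). Base change of closed embeddings is a closed embedding, so for an \(Y\)-scheme \(T\to Y\) the morphism \(Z_T\tohook X_T\) is cut out by the pullback of \(\mathscr{I}\) to \(X_T\), namely \(\mathscr{I}\otimes_{\mathscr{O}_Y}\mathscr{O}_T\). The closed embedding \(Z_T\tohook X_T\) is an isomorphism if and only if this ideal sheaf is zero. Hence, writing \(q\from X_T\to X\) for the projection,
\[
  \fiso_p^Z(T)=\{*\}\iff q^{*}\mathscr{I}=0 \text{ on } X_T.
\]
Because \(Z\tohook X\) is a closed embedding and (for the moreover part) \(Z\to Y\) is of finite presentation, while \(X\to Y\) is of finite presentation by assumption, the surjection \(\mathscr{O}_X\twoheadrightarrow \mathscr{O}_Z\) is a map of finitely presented quasi-coherent \(\mathscr{O}_X\)-modules and its kernel \(\mathscr{I}\) is a finitely presented \(\mathscr{O}_X\)-module.

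Next I would invoke the representability of the vanishing locus for a finitely presented quasi-coherent sheaf on a flat, finitely presented, pure morphism. Namely, for such data \((p,\mathscr{I})\) the functor \(T\mapsto \{*\}\) if \(q^{*}\mathscr{I}=0\) and \(\emptyset\) otherwise is representable by a closed subscheme \(Y'\) of \(Y\), of finite presentation over \(Y\) when \(\mathscr{I}\) is finitely presented. This is the content of the Stacks Project machinery on pure modules (culminating in \stacks{07AI}); the purity hypothesis is exactly what is needed to replace the properness assumption that would otherwise force \(Y'\) to be closed. Combining the translation above with this representability statement identifies \(\fiso_p^Z\) with the vanishing-locus functor of \(\mathscr{I}\), proving both assertions of the theorem.

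The main obstacle is the representability by a \emph{closed} subscheme (as opposed to only a locally closed or ind-closed subfunctor), which relies on the deep theory of purity of modules in the style of Raynaud--Gruson; once that black box is accepted, the reduction sketched above is essentially formal. I would not reprove the pure-module representability statement here but simply cite it, as the author does.
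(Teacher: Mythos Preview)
The paper does not prove this theorem: it is introduced with ``The following can be found in \cite[\stacks{07AI}]{stacks-project}'' and is used as a black box. So there is no argument in the paper for your proposal to be compared against.

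Your proposed reduction, however, has a genuine error. You assert that \(Z_T\hookrightarrow X_T\) is an isomorphism if and only if \(q^{*}\mathscr{I}=0\) on \(X_T\). This is false: the ideal sheaf of \(Z_T\) in \(X_T\) is the \emph{image} of the map \(q^{*}\mathscr{I}\to \mathscr{O}_{X_T}\), and that map is injective only when \(\mathscr{O}_Z\) (equivalently \(\mathscr{I}\)) is flat over \(Y\), which is nowhere assumed. For a concrete failure take \(p=\Id_Y\) with \(Y=\s{\field[\varepsilon]/(\varepsilon^{2})}\) and \(Z=V(\varepsilon)\). Then \(p\) is flat, of finite presentation and pure, and for \(T=\s{\field}\) one has \(Z_T\cong X_T\cong\s{\field}\), yet \(q^{*}\mathscr{I}=(\varepsilon)\otimes_{\field[\varepsilon]/(\varepsilon^{2})}\field\cong\field\neq 0\). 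Thus the vanishing-locus functor you describe and \(\fiso_p^Z\) are genuinely different functors, and the ``essentially formal'' reduction does not go through.

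There is also a circularity: \stacks{07AI} is not a module-vanishing lemma into which you can feed \(\mathscr{I}\); it \emph{is} the statement that \(\fiso_p^Z\) is representable by a closed subscheme under exactly these hypotheses. The proof in the Stacks Project does not pass through the ideal sheaf in the way you suggest, but rather through the Raynaud--Gruson theory of purity and universal flattening applied to \(\mathscr{O}_Z\).
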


\cref{thr:isofy-representable-by-open} below, by \cref{rmk:from-rep-a-functor-to-final-object}, is equivalent to \cite[Chapter 5, Theorem 5.22 (b), p.132]{fga-explained}.

\begin{theorem}\label{thr:isofy-representable-by-open}
  Let \(p\from X\to Y\) and \(Z\to X\) be morphisms.
  If \(Y\) is Noetherian, \(Z\to X\) is projective and \(Z\), \(X\) are proper and flat over \(Y\), then \(\fiso_p^Z\) is representable in the category of locally Noetherian \(Y\)-schemes and the representing scheme \(Y'\) is an open subscheme of \(Y\).
\end{theorem}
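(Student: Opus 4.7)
The plan is to reduce the statement to the representability of the Isom functor, which is the content of the FGA Explained reference cited. First, I would invoke the representability of \(\funct{Hom}_Y(Z,X)\) and \(\funct{Isom}_Y(Z,X)\) under the hypotheses that \(Y\) is Noetherian and \(Z,X\) are proper and flat over \(Y\). By the standard Hilbert scheme construction applied to the graph, \(\funct{Hom}_Y(Z,X)\) is representable as a locally Noetherian \(Y\)-scheme, and \(\funct{Isom}_Y(Z,X)\tohook \funct{Hom}_Y(Z,X)\) is an open subfunctor, hence represented by an open subscheme \(I\tohook H\) where \(H\) represents \(\funct{Hom}_Y(Z,X)\).

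Second, the given morphism \(p\from Z\to X\) over \(Y\) corresponds, by the universal property of \(H\), to a unique \(Y\)-morphism \(\sigma\from Y\to H\). I would then define
\[
Y' := Y\times_H I,
\] which is an open subscheme of \(Y\) since \(I\tohook H\) is an open embedding and open embeddings are stable under base change. This \(Y'\) is the candidate representing scheme.

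Third, I would verify that \(Y'\) represents \(\fiso_p^Z\). For any \(Y\)-scheme \(T\to Y\), the base change \(Z_T\to X_T\) of \(p\) corresponds to the composite \(T\to Y\xrightarrow{\sigma} H\). This composite factorises through \(I\tohook H\) if and only if \(Z_T\to X_T\) is an isomorphism, which by the fibre-product definition of \(Y'\) is equivalent to \(T\to Y\) factorising (uniquely, since \(Y'\tohook Y\) is a monomorphism) through \(Y'\tohook Y\). This is exactly the universal property defining \(\fiso_p^Z\), so by \cref{rmk:from-rep-a-functor-to-final-object} the pair \((Y',\star)\) represents the functor.

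The main obstacle—and the substantive content hidden in the proof—is the representability of the Hom and Isom functors themselves, which is a nontrivial application of Grothendieck's Hilbert scheme construction. The graph of a morphism \(Z\to X\) over \(Y\) is a closed subscheme of \(Z\times_YX\) (since \(X\to Y\) is separated), flat and proper over \(Y\), so \(\funct{Hom}_Y(Z,X)\) embeds as an open subscheme of \(\fcat{Hilb}(Z\times_YX/Y)\) (namely the locus where the candidate subscheme projects isomorphically onto \(Z\)); openness of the isomorphism locus inside \(\funct{Hom}_Y(Z,X)\) then follows from the same reasoning applied to the inverse. A self-contained argument would have to carry out these two openness verifications, for which the fibre-by-fibre criterion for flatness together with properness of \(Z,X\) over \(Y\) plays the decisive role; the existing literature we cite performs exactly this.
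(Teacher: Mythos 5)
The paper does not actually prove \cref{thr:isofy-representable-by-open}: it only observes, via \cref{rmk:from-rep-a-functor-to-final-object}, that the statement is a reformulation of \cite[Chapter 5, Theorem 5.22 (b)]{fga-explained}, so there is no internal argument to compare yours against. Your proposed reduction, however, has a genuine gap under the stated hypotheses. The Hilbert-scheme construction of \(\funct{Hom}_Y(Z,X)\) requires \(Z\times_YX\) to be (quasi-)projective over \(Y\), so that graphs acquire Hilbert polynomials with respect to a relatively very ample sheaf and \(\fcat{Hilb}(Z\times_YX/Y)\) exists as a scheme. Here only the \emph{morphism} \(Z\to X\) is assumed projective, while \(Z\) and \(X\) are merely proper and flat over \(Y\); a relatively ample sheaf for \(Z\to X\) does not yield one for \(Z\to Y\). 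Hence the scheme \(H\) you base-change along need not exist (the Hilbert functor of a proper non-projective family is in general only an algebraic space), and the construction \(Y'=Y\times_HI\) is unavailable.

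Second, even when everything is projective over \(Y\), your argument is circular exactly where the substantive content sits: the openness of \(\funct{Hom}\) inside \(\fcat{Hilb}\) (the locus where the universal subscheme projects isomorphically onto \(Z\)) and of \(\funct{Isom}\) inside \(\funct{Hom}\) are the universal instances of the statement being proved, namely that the locus where a morphism of flat proper families becomes an isomorphism is open and satisfies the universal property. Pulling the universal case back along a classifying map \(Y\to H\) does not advance it. What \cite[Chapter 5, Theorem 5.22]{fga-explained} supplies is a direct argument for a projective morphism between proper flat \(Y\)-schemes (fibrewise isomorphism criteria, openness of the flat locus, cohomology and base change for the pushforward of \(\mathscr{O}\)), which is why the hypotheses of \cref{thr:isofy-representable-by-open} single out projectivity of \(Z\to X\) rather than of \(Z\) and \(X\) over \(Y\). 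If you want a self-contained proof, that is the argument to write out; the detour through \(\funct{Hom}_Y(Z,X)\) both strengthens the hypotheses and postpones the key step rather than carrying it out.
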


\begin{remark}\label{rmk:isofy-representable-by-connected-component}
  Notice that a proper morphism onto a Noetherian scheme is of finite presentation (trivially) and pure (see \cite[\stacks{05K3}]{stacks-project}).
  Hence, if furthermore \(Z\to X\) is a closed embedding, by \cref{thr:isofy-representable-by-closed}, the scheme \(Y'\) representing \(\fiso_p^Z\) is a connected component of \(Y\).
\end{remark}

\begin{theorem}\label{thr:closed-isofying}
  Let \(p\from X\to Y\) be a morphism and \(Z\) a closed subscheme of \(X\).
  Let \(\Omega\) denote the set of closed subschemes \(W\) of \(Y\) such that \(Z_W\tohook X_W\) is an isomorphism and denote by \(\Sigma_{\Omega}\) the closed subscheme \(\Sigma_{W\in \Omega}W\) of \(Y\).
  If \(p\) is \(\aleph_1\)-projective, then the scheme \(\Sigma_{\Omega}\) represents the functor \(\fiso_{p}^{Z}\).
\end{theorem}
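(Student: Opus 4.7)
By \cref{rmk:from-rep-a-functor-to-final-object}, it suffices to prove that $(\Sigma_\Omega,*)$ is a terminal object of $\int\fiso_p^Z$; and since $\Sigma_\Omega\tohook Y$ is a monomorphism, uniqueness of the factorising arrow is automatic. So the plan splits the work into two parts: Part~(I) is to verify that $\Sigma_\Omega$ itself lies in $\Omega$, that is, that $Z_{\Sigma_\Omega}\tohook X_{\Sigma_\Omega}$ is an isomorphism; Part~(II) is to show that every $T\to Y$ with $Z_T\tohook X_T$ an isomorphism factors through $\Sigma_\Omega$.

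For Part~(I), since being an isomorphism is local on the target, I fix an $\aleph_1$-projective covering $(\{U_i=\spec A_i\},\{V_{i,j}=\spec B_{i,j}\})$ of $p$. Writing $J\subseteq B_{i,j}$ for the ideal of $Z\cap V_{i,j}$ and $\mathfrak{a}_W^{(i)}\subseteq A_i$ for the ideal of $W\cap U_i$, the condition $W\in\Omega$ translates locally to $J\subseteq \mathfrak{a}_W^{(i)}B_{i,j}$, while $\Sigma_\Omega$ is defined on $U_i$ by $\mathfrak{a}_0^{(i)}:=\bigcap_{W\in\Omega}\mathfrak{a}_W^{(i)}$. \cref{lem:extending-ideals-to-nice-tensor-product}---equivalently \cref{prop:extending-arbitrary-families-of-cl-ssch}---then yields
\[
\mathfrak{a}_0^{(i)}B_{i,j}\;=\;\bigcap_{W\in\Omega}\mathfrak{a}_W^{(i)}B_{i,j}\;\supseteq\; J,
\]
which establishes (I).

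For Part~(II), factoring through the closed embedding $\Sigma_\Omega\tohook Y$ is local on the source by \cref{rmk:majorize-is-local-on-the-target}, so I reduce to the case $T=\spec C$ affine, mapping into a single $U_i=\spec A$ of the covering; put $\mathfrak{b}=\ker(A\to C)$, so that $\overline{T}=V(\mathfrak{b})$ is the scheme theoretic image of $T\to U_i$. Since $\aleph_1$-projective implies flat, \cref{lem:sch-image-commutes-with-flat-bc} applied to the projection $p|_{U_i}\from X|_{U_i}\to U_i$ identifies $X|_{\overline{T}}$ with the scheme theoretic image of $X_T\to X|_{U_i}$, and then \cref{lem:iso-descent-by-sch-image} applied to the closed subscheme $Z|_{U_i}\tohook X|_{U_i}$ propagates the given isomorphism $Z_T\tohook X_T$ to an isomorphism $Z|_{\overline{T}}\tohook X|_{\overline{T}}$. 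Extending $\overline{T}$ to a closed subscheme $W\subseteq Y$ belonging to $\Omega$---taking for instance the scheme theoretic image of $T\to U_i\tohook Y$---then yields $\mathfrak{a}_W^{(i)}\subseteq\mathfrak{b}$, whence $\mathfrak{a}_0^{(i)}\subseteq\mathfrak{b}$, and so $T\to U_i$ indeed factors through $\Sigma_\Omega\cap U_i$.

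I expect the subtle point to be precisely this last global extension step: whenever the open embedding $U_i\tohook Y$ is quasi-compact (e.g.\ when $Y$ is locally Noetherian, which already covers all intended applications) it is routine via a second application of \cref{lem:sch-image-commutes-with-flat-bc,lem:iso-descent-by-sch-image}. Part~(I), in contrast, is a purely algebraic consequence of the definition of $\aleph_1$-projective, packaged into \cref{lem:extending-ideals-to-nice-tensor-product}.
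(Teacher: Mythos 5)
Your proof is correct and follows essentially the same route as the paper's: Part~(I) is the paper's application of \cref{prop:extending-arbitrary-families-of-cl-ssch}, which you simply unwind via \cref{lem:extending-ideals-to-nice-tensor-product} on the charts of an \(\aleph_1\)-projective covering, and Part~(II) is the paper's argument via the schematic image of \(T\to Y\) together with \cref{lem:iso-descent-by-sch-image}. If anything you are more careful than the paper, which cites \cref{lem:iso-descent-by-sch-image} directly and leaves implicit the identification of \(X_{\overline{T}}\) with the schematic image of \(X_T\to X\) through \cref{lem:sch-image-commutes-with-flat-bc} (hence also the quasi-compactness issue you flag in the final globalisation step); your localisation on \(T\) makes that step explicit.
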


By \cref{rmk:from-rep-a-functor-to-final-object}, a closed subscheme \(Y'\) of \(Y\) represents the functor \(\fiso_{p}^{Z}\) if and only if a morphism \(T\to Y\) factorises through \(Y'\tohook Y\) whenever the closed embedding \(Z_T\tohook X_T\) is an isomorphism.

\begin{proof}[of \cref{thr:closed-isofying}]
  For every \(W\in\Omega\), the isomorphism \(Z_W\tohook X_W\) is an \(X\)-morphism, hence the closed embeddings \(Z_W\tohook X\) and \(X_W\tohook X\) correspond to the same closed subscheme of \(X\).
  Then, the schemes \(\Sigma_{W\in\Omega}Z_W\) and \(\Sigma_{W\in\Omega}X_W\) are the same subscheme of \(X\) and, by   \cref{prop:extending-arbitrary-families-of-cl-ssch}, the closed embedding \(Z_{(\Sigma_{\Omega})}\tohook X_{(\Sigma_{\Omega})}\) is an isomorphism, in fact an \(X\)-isomorphism.
  So, if a morphism \(T\to Y\) factorises through \(\Sigma_{\Omega}\), the closed embedding \(Z_T\tohook X_T\) is an isomorphism.
  \medskip

  Now, given a morphism \(T\to Y\) such that the closed embedding \(Z_T\tohook X_T\) is an isomorphism, by \cref{lem:iso-descent-by-sch-image}, the schematic image \(\overline{T}\) of \(T\to Y\) is a closed subscheme of \(Y\) belonging to \(\Omega\).
  Hence, there is a unique \(Y\)-morphism \(\overline{T}\tohook \Sigma_{\Omega}\) and then, by composition, there is a unique \(Y\)-morphism \(T\to \Sigma_{\Omega}\).
\end{proof}

\begin{definition}\label{def:f-constfy}
  Let \(S\) be a ground scheme.
  Let \(p\from X\to Y\) and \(f\from X \to W\) be \(S\)-morphisms.
  Let \(Y'\) be a closed subscheme of \(Y\).
  We call \(Y'\) a \emph{\(f\)-constfy} closed subscheme of \(Y\), if the morphism \(f|_{X_{Y'}}\from X_{Y'}\to W\) is constant along the fibres of the projection \(X_{Y'}\to Y'\) and it satisfies the following universal property:
  A morphism \(T\to Y\) factorises through \(Y'\tohook Y\) if and only if \(f|_{X_{T}}\) is constant along the fibres of the projection \(X_{T}\to T\).
\end{definition}

If a \(f\)-constfy closed subscheme exists, by abstract nonsense it is uniquely determined up to a unique isomorphism.

\begin{theorem}\label{thr:constantfying-closed}
  Let \(S\) be a ground scheme.
  Let \(p\from X\to Y\) and \(f\from X \to W\) be \(S\)-morphisms.
  If \(W\) is separated over \(S\) and \(p\) is flat and proper, then the \(f\)-constfy closed subscheme of \(Y\) exists.
\end{theorem}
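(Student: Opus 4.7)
The plan is to recognise the statement as a direct application of \cref{thr:isofy-representable-by-closed} via the \(\fiso\) framework. Since \(W\to S\) is separated, \(\Delta_{W/S}\) is a closed embedding, so the Cartesian square
\[\begin{tikzcd}
Z \rar{} \dar[hook]{} \ulcorn{dr} & W \dar[hook]{\Delta_{W/S}} \\
\tilde X \rar{f\times_Y f} & W\times_SW \\
\end{tikzcd}\]
with \(\tilde X = X\times_YX\) defines a closed subscheme \(Z\tohook \tilde X\). Writing \(\tilde p\from \tilde X\to Y\) for the projection induced by \(p\), I would show that the closed subscheme of \(Y\) representing the functor \(\fiso_{\tilde p}^Z\) (see \cref{def:isofy-functor}) is the desired \(f\)-constfy closed subscheme.

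For the key equivalence, I would check that for any morphism \(T\to Y\), base-changing the square above along \(T\to Y\) identifies \((X\times_YX)_T\) with \(X_T\times_TX_T\) and \((f\times_Y f)_T\) with \(f|_{X_T}\times_T f|_{X_T}\), so the pulled-back square is precisely the one which, in the sense of \cref{def:constant-along-the-fibres-2}, witnesses constancy of \(f|_{X_T}\) along the fibres of \(X_T\to T\). Hence \(Z_T\tohook \tilde X_T\) is an isomorphism if and only if \(f|_{X_T}\) is constant along the fibres of \(X_T\to T\).

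I would then verify the hypotheses of \cref{thr:isofy-representable-by-closed} for \(\tilde p\). Since \(p\) is flat and proper, and \(\tilde p\) is the composition of the base change \(\tilde X\to X\) of \(p\) with \(p\) itself, the morphism \(\tilde p\) is also flat and proper; by \cref{rmk:isofy-representable-by-connected-component}, in the paper's locally Noetherian setting, properness also yields the finite presentation and purity required. Hence \cref{thr:isofy-representable-by-closed} provides a closed subscheme \(Y'\tohook Y\) representing \(\fiso_{\tilde p}^Z\).

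Finally, as in the proof of \cref{thr:closed-isofying}, the universal property of \(Y'\) as a representing object, combined with the equivalence from the second step, says that a morphism \(T\to Y\) factorises through \(Y'\) if and only if \(f|_{X_T}\) is constant along the fibres of \(X_T\to T\); taking \(T=Y'\) with the identity then also gives the first clause of \cref{def:f-constfy}, so \(Y'\) is the desired \(f\)-constfy closed subscheme. The main obstacle is the bookkeeping around properness --- ensuring that the paper's notion of \enquote{proper} actually delivers the finite presentation and purity needed by \cref{thr:isofy-representable-by-closed}, which is routine in the locally Noetherian context.
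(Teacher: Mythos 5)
Your proposal is correct and follows essentially the same route as the paper: form the closed subscheme \(Z\tohook X\times_YX\) by pulling back the diagonal \(\Delta_{W/S}\) (using separatedness), note that \(X\times_YX\to Y\) inherits flatness and properness from \(p\), apply \cref{thr:isofy-representable-by-closed} to represent \(\fiso\) by a closed subscheme \(Y'\), and check via base change that \(Z_T\tohook (X\times_YX)_T\) being an isomorphism is exactly the constancy condition of \cref{def:constant-along-the-fibres-2}. Your explicit flagging of the finite-presentation and purity hypotheses is if anything slightly more careful than the paper's own proof, which invokes \cref{thr:isofy-representable-by-closed} directly from flatness and properness.
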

\begin{proof}
  Consider the following Cartesian diagram.
  \[\begin{tikzcd}
      Z \rar{} \dar[hook]{} \ulcorn{dr} & W \dar[hook]{\Delta_{W/S}} \\
      X\times_YX \rar{f\times_Yf} & W\times_S W  \\
\end{tikzcd}
\]\ncd Since \(W\) is separated, \(Z\tohook X\times_YX\) is a closed embedding and, since \(p\) is flat and proper, so is \(g\from X\times_YX\to Y\).
Hence, by \cref{thr:isofy-representable-by-closed}, the functor \(\fiso_g^Z\) is represented by a closed subscheme \(Y'\) of \(Y\).
We claim that \(Y'\) is the \(f\)-constfy closed subscheme of \(Y\).
\medskip

It is straightforward to check that the following diagram is Cartesian.
\[\begin{tikzcd}
    X_{Y'}\times_{Y'}X_{Y'} \rar[hook]{} \dar{} & X\times_Y X \dar{g} \\
    Y'  \rar[hook]{} & Y\\
\end{tikzcd}
\]\ncd So, since \(Z_{Y'}\tohook (X\times_YX)\times_YY'\) is an isomorphism, \(f|_{X_{Y'}}\) is constant along the fibres of the projection \(X_{Y'}\to Y'\).
Furthermore, now it is clear that \(Y'\) satisfies the required universal property.
\end{proof}

\begin{remark}\label{rmk:commutes-of-iso-and-constfy}
  Let \(S\) be a ground scheme.
  Let \(p\from X\to Y\) and \(f\from X \to W\) be \(S\)-morphisms.
  Let \(Z\tohook X\) be a closed subscheme of \(X\).
  In this situation, we may iterate the constructions of the \(f\)-constfy closed subscheme of \(Y\) and the closed subscheme of \(Y\) representing the functor \(\fiso_p^Z\).
  Assuming existence, it is straightforward to see that both possible ways of iterating such constructions give the same closed subscheme of \(Y\).
\end{remark}

\section{The blow up \S family}
\label{sec:cross-blow-up}

Consider the following situation.

\begin{situation}\label{sit:blow-up-section-family}
  Let \(S\) be a ground scheme.
  Let \(X\), \(Y\) be \(S\)-schemes with \(Y\to S\) an fpqc morphism.
  Consider the scheme \(X_Y=X\times_SY\) and denote by \(\pi\from X_Y\to Y\) and \(\alpha\from X_Y\to X\) the projections.
  Let \(Z\) be a closed subscheme of \(X_Y\).
  \[
    \begin{tikzcd}
      Z \rar[hook]{\mathbf{cl.emb.}} & X_Y \dar[swap]{\pi} \rar{\alpha} \ulcorn{dr} & X \dar{} \\
    &  Y \rar{\mathbf{fpqc}} & S  \\
\end{tikzcd}
\]\ncd
\end{situation}

In this section we prove our main result, \cref{thr:blow-up-S-family-exists}, which asserts the existence of the blow up \S family of the projection \(X_Y\to Y\) along \(Z\) (see~\cref{def:blowup-sections-family}) under suitable assumptions.
The blow up \S family is a generalisation of blow ups, as such \cref{thr:structure-of-blowup-sf-Cartesian-case} is the corresponding generalisation of the well-known fact that a blow up is an isomorphism away of its centre.

\begin{definition}\label{def:blowup-sections-family}
  Consider \cref{sit:blow-up-section-family}.
  Let \(\mathfrak{B}\) be an \(S\)-scheme and \(b\from \mathfrak{B}\to X\) an \(S\)-morphism.
  \[
    \begin{tikzcd}
      (b_Y)^{-1}(Z) \rar[hook]{} \dar{} \ulcorn{dr} & \mathfrak{B}_Y \dar[swap]{b_Y} \rar{} \ulcorn{dr} & \mathfrak{B} \dar{} \\
      Z \rar[hook]{} & X_Y \rar{\alpha} & X \\
    \end{tikzcd}
  \]\ncd
  We call the couple \((\mathfrak{B},b)\) a \emph{blow up split section family of \(\pi\) along \(Z\)} (or blow up \S family for short) if \((b_Y)^{-1}(Z)\tohook \mathfrak{B}_Y\) is an effective Cartier divisor and it satisfies the following universal property:
  For every \(S\)-morphism \(g\from T\to X\) for which \((g_Y)^{-1}(Z)\tohook T_Y\) is an effective Cartier divisor, there is a unique morphism \(h\from T \to \mathfrak{B}\) such that \(b\circ h=g\).
  Analogously to classic blow ups, we call \(Z\) the \emph{centre} of the blow up \S family and \(b^{-1}(X_Y)\) the \emph{exceptional divisor} in \(\mathfrak{B}_Y\).
\end{definition}

If a blow up \S family exists, by abstract nonsense it is uniquely determined up to a unique isomorphism.

\begin{theorem}\label{thr:blow-up-S-family-exists}
  Consider \cref{sit:blow-up-section-family} assuming all the schemes locally Noetherian.
  If \(X_Y\) is piecewise quasiprojective over \(S\), \(X\) is separated over \(S\) and \(Y\to S\) is a morphism with geometrically integral fibres and it is finite locally free or proper and flat, then the blow up \S family of \(\pi\) along \(Z\) exists.
\end{theorem}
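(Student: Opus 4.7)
The plan is to build $\mathfrak{B}$ in two stages. Stage one represents $Y$-morphisms $T_Y \to X_Y$ whose pullback of $Z$ is an effective Cartier divisor, via the universal section family (USF) of the classical blow up of $X_Y$ along $Z$. Stage two cuts out those sections which descend from $Y$-morphisms to honest $S$-morphisms $T \to X$, via the $f$-constfy construction.

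Write $\beta\from \bl_Z(X_Y) \to X_Y$ for the blow up. Because $\beta$ is projective and $X_Y$ is piecewise quasiprojective over $S$, so is $\bl_Z(X_Y)$. Combined with the Noetherian assumption and the dichotomy on $Y\to S$, either \cref{thr:Weil-restriction-representable} (finite locally free case, via \cref{lem:qprojective-affiness-finite-set-points}) or \cref{thr:Usf-representable} together with \cref{rmk:Usf-representable-for-picewise-qp} (proper flat case) yields the USF $\mathfrak{X}$ of $\bl_Z(X_Y) \to Y$, with universal section $\psi\from \mathfrak{X}_Y \to \bl_Z(X_Y)$ over $Y$. Set $f = \alpha\circ\beta\circ\psi\from \mathfrak{X}_Y \to X$. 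The projection $p_\mathfrak{X}\from \mathfrak{X}_Y \to \mathfrak{X}$ is the base change of $Y\to S$, hence proper and flat in both cases; together with the separatedness of $X$ over $S$, \cref{thr:constantfying-closed} delivers the $f$-constfy closed subscheme $\mathfrak{B} \tohook \mathfrak{X}$. Since $p_\mathfrak{B}\from \mathfrak{B}_Y \to \mathfrak{B}$ is fpqc (base change of $Y\to S$), \cref{prop:const-along-fibres-and-rel-const} descends $f|_{\mathfrak{B}_Y}$ to a unique $b\from \mathfrak{B} \to X$ with $b\circ p_\mathfrak{B} = f|_{\mathfrak{B}_Y}$.

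I would then verify that $(\mathfrak{B},b)$ is the blow up \S family. The key identity $b_Y = \beta\circ\psi|_{\mathfrak{B}_Y}$ follows from the uniqueness clause in the universal property of $X_Y = X\times_S Y$: both sides are $Y$-morphisms $\mathfrak{B}_Y \to X_Y$ whose composition with $\alpha$ equals $f|_{\mathfrak{B}_Y} = b\circ p_\mathfrak{B}$. Consequently $b_Y$ factors through $\beta$, making $(b_Y)^{-1}(Z)$ an effective Cartier divisor on $\mathfrak{B}_Y$. For the universal property, given $g\from T\to X$ with $(g_Y)^{-1}(Z)$ Cartier, the blow up property gives a unique $Y$-morphism $\tilde g\from T_Y \to \bl_Z(X_Y)$ with $\beta\circ\tilde g = g_Y$; the USF property then produces a unique $h_1\from T\to\mathfrak{X}$ with $\tilde g = \psi\circ(h_1)_Y$; and the chase $f\circ(h_1)_Y = \alpha\circ\beta\circ\tilde g = \alpha\circ g_Y = g\circ p_T$ shows the constfy condition is met, so $h_1$ factors uniquely through $\mathfrak{B}$ as $h\from T\to\mathfrak{B}$. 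The equation $b\circ h = g$, together with its uniqueness, is then obtained by base changing along the fpqc epimorphism $p_T$ and reading the three universal properties in reverse.

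The main obstacle is the careful bookkeeping of these three nested universal properties and in particular the matching of the $Y$-structures at each step. The subtlest ingredient is the identification $b_Y = \beta\circ\psi|_{\mathfrak{B}_Y}$, which is the mechanism by which the Cartier-divisor property on $\bl_Z(X_Y)$ is transported through the USF and the constfy down to $\mathfrak{B}_Y$; everything else is a matter of composing the representing properties produced in the preamble.
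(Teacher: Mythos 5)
Your construction stops one step short of the paper's, and the missing step is essential. You take $\mathfrak{B}$ to be the $f$-constfy closed subscheme of the universal section family $\mathfrak{X}$ of $\bl(Z,X_Y)\to Y$, and you then claim that because $b_Y$ factors through the blow up $\beta$, the pullback $(b_Y)^{-1}(Z)$ is an effective Cartier divisor. That inference is false: the universal property of the blow up goes only one way. Factoring through $\beta$ means $(b_Y)^{-1}(Z)$ is the pullback of the exceptional divisor along $\psi\circ i_Y$, and the pullback of an effective Cartier divisor along an arbitrary morphism is merely \emph{locally principal} — the local generator can become a zerodivisor. This is not a hypothetical worry: in the small-resolution example of \cref{ssec:examples}, the constfy'd universal section family contains components $\{0\}\times V_n$ on which the pullback of $Z$ is the whole component, and these must be discarded. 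So the couple you propose fails the first defining condition of \cref{def:blowup-sections-family}.

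The paper closes this gap with exactly one more stage: after descending to $v\from \mathfrak{Z}\to X$ (your $b$ on your $\mathfrak{B}$, the paper's $\mathfrak{Z}$), it observes that $(v_Y)^{-1}(Z)$ is locally principal and invokes \cref{thr:blowup-locally-ppal-product-form} — which uses the geometrically-integral-fibres hypothesis on $Y\to S$ through \cref{lem:associated-pt-of-product-over-flat-base} — to produce a closed subscheme $\mathfrak{B}\tohook\mathfrak{Z}$ with $\mathfrak{B}_Y\tohook\mathfrak{Z}_Y$ the blow up of $\mathfrak{Z}_Y$ along $(v_Y)^{-1}(Z)$; this shaves off the offending associated points while remaining of product form, and the second half of that theorem supplies the last link in the chain of universal properties. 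Everything before this point in your argument (the USF of the blow up, the constfy step, the descent of $f$ and the identification $v_Y=\bl\circ\psi\circ i_Y$ via the Cartesian-square argument, and the three-stage verification of the universal property) matches the paper; note also that this is the only place in the proof where the geometric integrality of the fibres of $Y\to S$ is actually used, so its absence from your argument is itself a warning sign.
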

\begin{proof}
  Consider the blow up \(\bl\from \bl(Z,X_Y)\to X_Y\) of \(X_Y\) along \(Z\).
  The scheme \(\bl(Z,X_Y)\) is again piecewise quasiprojective over \(S\), then, by \cref{thr:Usf-representable} or \cref{thr:Weil-restriction-representable}, the universal section family \((\mathfrak{X},\psi)\) of \((\pi\circ \bl)\from \bl(Z,X_Y)\to Y\) exists.
  So now, we may consider the following diagram,
  \[\begin{tikzcd}
      \mathfrak{X}_Y \dar{} \rar{\psi} & \bl(Z,X_Y) \rar{\bl} & X_Y \rar{\alpha} & X \\
      \mathfrak{X} \\
    \end{tikzcd}
  \]\ncd where \(\mathfrak{X}_Y\to \mathfrak{X}\) is the projection.
  By \cref{thr:constantfying-closed}, the \((\alpha\circ \bl\circ \psi) \)-constfy closed subscheme \(\mathfrak{Z}\) of \(\mathfrak{X}\) exists.
  Denote by \(i\from \mathfrak{Z}\tohook \mathfrak{X}\) its corresponding closed embedding.
  Now, by construction the morphism \(\alpha\circ \bl\circ \psi\circ i_Y\) is constant along the fibres of the projection \(p\from \mathfrak{Z}_Y\to \mathfrak{Z}\), hence, by \cref{prop:const-along-fibres-and-rel-const}, there is a morphism \(v\from \mathfrak{Z}\to X\) such that \(v\circ p= \alpha\circ \bl\circ\psi\circ i_Y\).
  Consider the following diagram.
  \[\begin{tikzcd}
      \mathfrak{Z}_Y \dar{p} \rar{\bl\circ \psi\circ i_Y} & X_Y \rar{\pi} \dar{\alpha} & Y \dar{} \\
      \mathfrak{Z} \rar{v} & X \rar{} & S \\
    \end{tikzcd}
  \]\ncd Since it commutes and both the right hand and the big squares are Cartesian, so is the left hand.
  That is, \(\bl\circ \psi\circ i_Y= v_Y\).
  Finally, since \((v_Y)^{-1}(Z)\) is the preimage by \(\psi\circ i_Y\) of the exceptional divisor in \(\bl(Z,X_Y)\), it is locally principal and, by \cref{thr:blowup-locally-ppal-product-form}, there is a closed subscheme \(\mathfrak{B}\) of \(\mathfrak{Z}\) such that the closed embedding \(\mathfrak{B}_Y\tohook \mathfrak{Z}_Y\) is the blow up of \(\mathfrak{Z}_Y\) along \((v_Y)^{-1}(Z)\).
  Denote by \(b\from \mathfrak{B}\to X\) the restriction of \(v\) to \(\mathfrak{B}\).
  \medskip

  Now, it is straightforward to check that the couple \((\mathfrak{B},b)\) is the blow up \S family of \(\pi\) along \(Z\).
  It follows by applying iteratively the universal properties of the objects used to construct \(\mathfrak{B}\) and, at the last step, \cref{thr:blowup-locally-ppal-product-form}.
\end{proof}

Consider \cref{sit:blow-up-section-family} assuming \(X\) connected, \(Y\) integral, Noetherian and projective and flat over \(S\).
\cref{thr:structure-of-blowup-sf-Cartesian-case} below is the generalisation to blow up \S families to the well-know fact that a blow up is an isomorphism away of its centre.

\begin{notation}\label{not:struture-blowup-sf}
  We recall that the so called ``flattening stratification'' of the morphism \(Z\to X\) is a finite stratification
  \[
    X = \sqcup_{\Phi\in\rat[t]}X_{\Phi}
  \] by locally closed subschemes such that for every \(\Phi\), the pullback of \(Z\to X\) by \(X_{\Phi}\tohook X\) is flat and the Hilbert polynomial of the fibres is constant equal to \(\Phi\), and moreover, a morphism \(T\to X\) factorises through \(\sqcup_{\Phi}X_{\Phi}\tohook X\) if and only if the projection \(Z_T\to T\) is flat (see~\cite[Chapter 5, Theorem 5.13, p.123 and \S 5.5.6, universal property \textbf{(F)}, p.129]{fga-explained} or \cite[Lemma 2.3 (flattening), p.64]{kleiman-1980-compac-picar-schem}).

  Since \(X_Y\to X\) is flat, the Hilbert polynomial of its fibres is constant, say \(\Phi_0\).
  By \cref{thr:isofy-representable-by-closed}, the functor \(\fiso_{X_Y\to X}^Z\) is representable by a closed subscheme \(X_0\) of \(X\).
  Observe that, by \cref{rmk:underlying-set-cl-ssch-isofy}, the underlying sets of \(X_{\Phi_0}\) and \(X_0\) are equal.
  In fact, it is not hard to see that they are the same closed subscheme of \(X\), but we will not use it.

  By \cite[Chapter 9, Lemma 9.3.4, p.258]{fga-explained}, for every \(\Phi\), the points \(x\in X_{\Phi}\) for which \(Z_x\tohook (X_Y)_x=Y_x\) is an effective Cartier divisor form a (possibly empty) open subscheme of \(X_{\Phi}\), we denote it by \(U_{\Phi}\).
\end{notation}

\begin{remark}\label{rmk:smooth+2flat-all/none-fibres-Cartier}
  By \cite[Théorème 2.1 (i), p.231]{grothendieck-sb-vi}, if furthermore \(Y\) is smooth over \(S\), the open subscheme \(U_{\Phi}\) of \(X_{\Phi}\) is also a closed subset.
  Hence, \(U_{\Phi}\) is either the empty scheme or a connected component of \(X_{\Phi}\).
\end{remark}

\begin{definition}\label{def:core-of-blowup-Sfamilies}
Consider \cref{sit:blow-up-section-family} assuming \(X\) connected, \(Y\) integral, Noetherian and projective and flat over \(S\).
Consider also \cref{not:struture-blowup-sf}.
We call the scheme \(X_0\) the \emph{core} of the blow up \S family of \(\pi\) along \(Z\).
\end{definition}


\begin{theorem}\label{thr:structure-of-blowup-sf-Cartesian-case}
  Consider \cref{sit:blow-up-section-family} assuming \(X\) connected, \(Y\) integral, Noetherian and projective and flat over \(S\).
  Consider also \cref{not:struture-blowup-sf}.
  Assume that the blow up \S family \((\mathfrak{B},b)\) of \(\pi\) along \(Z\) exists.
  Then, the open subscheme \(\mathfrak{B}\setminus b^{-1}(X_0)\) of \(\mathfrak{B}\) is isomorphic to \(\sqcup_{\Phi}U_{\Phi}\).
\end{theorem}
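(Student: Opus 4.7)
The plan is to identify \(\mathfrak{B}\setminus b^{-1}(X_0)\) and \(\sqcup_{\Phi}U_{\Phi}\) by showing that both represent the same functor on \(\sch_S\). Since \(\mathfrak{B}\setminus b^{-1}(X_0)=\mathfrak{B}\times_X(X\setminus X_0)\), combining \cref{def:blowup-sections-family} with the open embedding \(X\setminus X_0\tohook X\) shows that the left-hand side represents the functor \(F\) sending an \(S\)-scheme \(T\) to the set of \(S\)-morphisms \(g\from T\to X\) for which (i) \((g_Y)^{-1}(Z)\tohook T_Y\) is an effective Cartier divisor and (ii) \(g(T)\cap X_0=\emptyset\). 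On the other hand, the natural monomorphism \(\sqcup_{\Phi}U_{\Phi}\tohook X\), built from the universal property of the flattening stratification combined with the openness of each \(U_{\Phi}\subseteq X_{\Phi}\) recalled in \cref{not:struture-blowup-sf}, represents the functor \(G\) of morphisms \(g\from T\to X\) such that \(Z_T\to T\) is flat and \(Z_t\tohook Y_t\) is an effective Cartier divisor for every \(t\in T\), i.e., for which \(Z_T\tohook T_Y\) is a relative effective Cartier divisor over \(T\). The theorem will follow from \(F=G\).

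For \(G\subseteq F\), a relative effective Cartier divisor over \(T\) is an absolute effective Cartier divisor on \(T_Y\) because \(T_Y\to T\) is flat (base change of the flat morphism \(Y\to S\)), giving (i). For (ii), I will show \(U_{\Phi_0}=\emptyset\): if \(x\in X_{\Phi_0}\), then \(Z_x=Y_x\) (a proper closed subscheme of integral \(Y_x\) strictly decreases the Hilbert polynomial), and the identity inclusion of a nonempty scheme into itself is not an effective Cartier divisor; nonemptiness of \(Y_x\) follows from the geometric integrality of the fibres of \(Y\to S\), a hypothesis inherited from \cref{thr:blow-up-S-family-exists}. The image of any \(g\in G(T)\) therefore lies in \(\sqcup_{\Phi\neq \Phi_0}X_{\Phi}\), whose underlying set is disjoint from that of \(X_0\) by the identification recalled in \cref{not:struture-blowup-sf}, so (ii) holds.

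For \(F\subseteq G\), take \(g\in F(T)\) and work locally on \(T_Y\), writing the ideal of \(Z_T\) as \((f)\) for some non-zerodivisor \(f\in \mathscr{O}_{T_Y}\). For each \(t\in T\), the restriction \(f|_{Y_{g(t)}}\) generates the ideal of \(Z_{g(t)}\tohook Y_{g(t)}\). Since \(g(t)\notin X_0\), we have \(Z_{g(t)}\neq Y_{g(t)}\), so \(f|_{Y_{g(t)}}\neq 0\); as \(Y_{g(t)}\) is integral, \(f|_{Y_{g(t)}}\) is a non-zerodivisor. This simultaneously gives the fibrewise Cartier condition on \(Z_t\tohook Y_t\) and, combined with the flatness of \(T_Y\to T\) via the standard local flatness criterion (a local equation on a flat ambient which is a non-zerodivisor on every fibre cuts out a subscheme flat over the base), the flatness of \(Z_T\to T\). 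Hence \(g\in G(T)\), completing \(F=G\).

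The main obstacle is the direction \(F\subseteq G\), where the absolute Cartier hypothesis on \((g_Y)^{-1}(Z)\) together with only the set-theoretic avoidance of \(X_0\) must be upgraded to relative flatness of \(Z_T\to T\). Both this upgrade and the vanishing of \(U_{\Phi_0}\) rely essentially on the geometric integrality of the fibres of \(Y\to S\); this is the conceptual reason why the flattening stratification of \(Z\to X\), together with the fibrewise Cartier locus, recovers exactly the blow up \S family away from its \emph{core} \(X_0\).
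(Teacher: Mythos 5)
Your proof is correct and follows essentially the same route as the paper's: the paper builds the two mutually inverse morphisms \(\xi\) and \(\varepsilon\) directly from the universal properties of the flattening stratification and of \((\mathfrak{B},b)\), using the integrality of the fibres of \(Y\), the emptiness of \(U_{\Phi_0}\), and the slicing criterion for flatness of a locally principal subscheme (\cite[\stacks{062Y}]{stacks-project}) exactly where you do. Recasting this as the identity of represented functors \(F=G\) is the same argument via Yoneda.
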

\begin{proof}
  Denote by \(E\) the exceptional divisor in \(\mathfrak{B}_Y\), that is \(E=(b_X)^{-1}(Z)\).
  Clearly, the closed subscheme \(b^{-1}(X_0)\) of \(\mathfrak{B}\) represents the functor \(\fiso_{\mathfrak{B}_Y\to \mathfrak{B}}^E\), hence \(\mathfrak{B}\setminus b^{-1}(X_0)\) is the set of points \(b\in \mathfrak{B}\) for which \(E_b\tohook X_b\) is not an isomorphism.
  Then, since \(E\tohook \mathfrak{B}_Y\) is an effective Cartier divisor and \(X\) is integral, \(\mathfrak{B}\setminus b^{-1}(X_0)\) is the open subset (by \cite[Chapter 9, Lemma 9.3.4, p.258]{fga-explained}) corresponding to the set of points \(b\in \mathfrak{B}\) for which \(E_b\tohook X_b\) is an effective Cartier divisor.
  Then, by \cite[\stacks{062Y}]{stacks-project}, \(E\cap (\mathfrak{B}\setminus b^{-1}(X_0))\to \mathfrak{B}\setminus b^{-1}(X_0)\) is flat and then, by the universal property of the flattening stratification, there is a unique morphism \(\mathfrak{B}\setminus b^{-1}(X_0)\to \sqcup_{\Phi}X_{\Phi}\) (whose image clearly is contained in \(\sqcup_{\Phi}U_{\Phi}\)) such that the corresponding diagram commutes.
  Hence, it factorises through \(\sqcup_{\Phi}U_{\Phi}\tohook \sqcup_{\Phi}X_{\Phi}\) via a unique morphism \(\xi\from (\mathfrak{B}\setminus b^{-1}(X_0))\to \sqcup_{\Phi}U_{\Phi}\).

  Now, by construction and again by \cite[\stacks{062Y}]{stacks-project}, \(Z_{\sqcup_{\Phi}U_{\Phi}}\tohook X_{\sqcup_{\Phi}U_{\Phi}}\) is an effective Cartier divisor, hence, by the universal property of \((\mathfrak{B},b)\), there is a unique morphism \(\sqcup_{\Phi}U_{\Phi}\to \mathfrak{B}\) (whose image is contained in \(\mathcal{B}\setminus b^{-1}(X_0)\) because \(U_{\Phi_0}\) is empty) such that the corresponding diagram commutes.
  So finally, \(\sqcup_{\Phi}U_{\Phi}\to \mathfrak{B}\) factorises through \(\mathfrak{B}\) via a unique morphism \(\varepsilon\from \sqcup_{\Phi}U_{\Phi}\to (\mathfrak{B}\setminus b^{-1}(X_0))\).

  Now, it is straightforward to check that \(\xi\) and \(\varepsilon\) are mutually inverse.
\end{proof}

\begin{corollary}\label{coro:when-T0-is-empty}
    Consider \cref{sit:blow-up-section-family} assuming \(Y\) integral, Noetherian and projective and flat over \(S\).
    If there are no point \(x\) of \(X\) such that the fibre \(Z_x\tohook Y_x\) is an isomorphism, then the blow up \S family of \(\pi\) along \(Z\)  exists and it is the natural morphism \(\sqcup_{\Phi}U_{\varphi}\tohook X\).
\end{corollary}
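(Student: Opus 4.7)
My plan is to take $\mathfrak{B}:=\sqcup_\Phi U_\Phi$ with its natural morphism $b\from \mathfrak{B}\to X$ and directly verify \cref{def:blowup-sections-family}. The hypothesis ensures, via \cref{rmk:underlying-set-cl-ssch-isofy}, that the core $X_0$ of \cref{def:core-of-blowup-Sfamilies} is empty, so in particular $U_{\Phi_0}=\emptyset$. First I would verify that $(b_Y)^{-1}(Z)\tohook \mathfrak{B}_Y$ is an effective Cartier divisor, stratum-wise: by the flattening stratification of $Z\to X$, the projection $Z_{U_\Phi}\to U_\Phi$ is flat, and its fibres are effective Cartier divisors of $Y_{\kappa(u)}$ by the defining property of $U_\Phi\subseteq X_\Phi$ in \cref{not:struture-blowup-sf}. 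Since $U_\Phi\times_S Y\to U_\Phi$ is a flat base change of $Y\to S$, \stacks{062Y} yields the Cartier property on each stratum, which then glue over $\mathfrak{B}$.

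For the universal property, I would fix an $S$-morphism $g\from T\to X$ such that $D:=(g_Y)^{-1}(Z)\tohook T_Y$ is an effective Cartier divisor, and seek a unique $h\from T\to \mathfrak{B}$ with $b\circ h=g$. Uniqueness is immediate because $b$ is a monomorphism, being a disjoint union of locally closed immersions into $X$ with pairwise disjoint images. For existence, I plan to invoke the universal property of the flattening stratification of $Z\to X$: once $D\to T$ is shown to be flat, $g$ factorises through $\sqcup_\Phi X_\Phi$, and the resulting morphism automatically lands in the open subscheme $\sqcup_\Phi U_\Phi$ because on each stratum the fibres of $D\to T$ are effective Cartier divisors, which is precisely the defining condition of $U_\Phi\subseteq X_\Phi$.

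The main obstacle is proving flatness of $D\to T$. Using the criterion relating flatness of an effective Cartier divisor to its fibres (e.g.\ \stacks{062Y}) applied to the flat projection $T_Y\to T$ and the Cartier divisor $D\tohook T_Y$, this flatness is equivalent to $D_t\tohook (T_Y)_t$ being an effective Cartier divisor for every $t\in T$. The corollary's hypothesis guarantees $D_t=Z_{g(t)}\otimes_{\kappa(g(t))}\kappa(t)$ is a \emph{proper} locally principal closed subscheme of $(T_Y)_t$; upgrading ``proper locally principal'' to ``effective Cartier'' then requires a fibre-wise associated-point argument combining the criterion of~\cite[IV$_3$ Chapitre IV, Proposition 11.10.10, p.172]{EGA-all} (locally principal subschemes avoiding the associated points are effective Cartier divisors) with the description of the associated points of the flat morphism $T_Y\to T$ via~\cite[IV$_2$ Chapitre IV, Proposition 3.3.6, p.44]{EGA-all}, where the integrality of $Y$ enters to control the associated points of the fibres of $Y\to S$.
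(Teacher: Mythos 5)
Your overall strategy — take \(\mathfrak{B}=\sqcup_\Phi U_\Phi\) and verify \cref{def:blowup-sections-family} directly — is exactly what the paper's one-line proof (``the core is empty'') defers to: since \cref{thr:structure-of-blowup-sf-Cartesian-case} assumes existence, one must indeed rerun its argument, and your two steps (stratum-wise Cartier-ness via the flattening stratification and \stacks{062Y}; reduction of the universal property to flatness of \(D\to T\), hence to fibrewise Cartier-ness of \(D\)) mirror the two halves of that proof. You have also correctly isolated the crux, namely upgrading ``proper locally principal in the fibre'' to ``effective Cartier in the fibre''.

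That last step, however, does not close as you describe, and this is a genuine gap. Integrality of \(Y\) does not control the associated points of \((T_Y)_t\cong Y\times_S\s{\kappa(t)}\): an integral scheme, flat and projective over \(S\), can have reducible fibres, and a proper locally principal closed subscheme of a reducible fibre may contain an associated point, hence fail to be Cartier. Concretely, take \(S=X=\s{k[a]}\), \(Y=V_+(xy-az^2)\subseteq \proj^2\times\aff^1\) (integral, flat, projective over \(S\)) and \(Z=Y\cap V_+(x-az)\subseteq X_Y=Y\). On the chart \(z=1\) one has \(\mathcal{O}_Y\cong k[x,y]\) with \(a=xy\) and \(Z=V(x(1-y))\), so \(Z\tohook X_Y\) is an effective Cartier divisor; no fibre \(Z_x\tohook Y_x\) is an isomorphism, so the corollary's hypothesis holds; yet \(Z_0=V_+(x)\) is an entire component of the reducible fibre \(Y_0=V_+(xy)\), so \(0\notin\sqcup_\Phi U_\Phi=\aff^1\setminus\{0\}\). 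Taking \(T=X\) and \(g=\Id_X\) (whose pullback of \(Z\) \emph{is} Cartier) shows that the universal object here is \(\Id_X\), not \(\sqcup_\Phi U_\Phi\tohook X\); so your intended associated-point argument for \(D_t\) cannot succeed in this generality. The same issue sits behind the step ``since \(E\tohook\mathfrak{B}_Y\) is an effective Cartier divisor and \(X\) is integral\dots'' in the paper's proof of \cref{thr:structure-of-blowup-sf-Cartesian-case}. The repair is to require \(Y\to S\) to have geometrically integral fibres, as in \cref{lem:associated-pt-of-product-over-flat-base} and \cref{thr:blow-up-S-family-exists}; then every \((T_Y)_t\) is integral, ``proper locally principal'' does imply ``effective Cartier'' there, and the rest of your argument goes through.
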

\begin{proof}
  In this case the core of the blow up \S family is empty.
\end{proof}

\section{Universal 2-relative clusters family}
\label{ssec:universal-relative-clusters-family}
Fix a morphism \(\pi\from \mathcal{S}\to B\) with \(\mathcal{S}\) piecewise quasiprojective and \(B\) projective and integral.
So, its universal section family \((X,\psi)\) exists (see~\cref{def:Weil-rest-and-Usf}).
Here, we present the final goal of this paper, namely the construction of the universal \((r+1)\)-relative cluster section family \(\Cl_{r+1}\) of \(\pi\) from \(\Cl_r\times_{\Cl_{r-1}}\Cl_r\).
The general construction requires introduce a lot of notation.
So, we restrict to the case \(r=1\), that is \(\Cl_2\) (see~\cref{def:2-Urcf}) from \(X\times X\), which is the inductive step for the whole construction.

The following is a preliminary proposition. We leave its proof to the reader.

\begin{proposition}\label{prop:Usf-of-S-0}
  Let \(\mathcal{S}_0\) be the scheme \(\mathcal{S}\times B\), \(\pi_0\from \mathcal{S}_0\to B\times X\) the morphism \(\pi\times \Id_X\) and \(p\from B\times X\to B\) the projection.
  Let \(\psi_0\from B\times X\times X\to \mathcal{S}_0\) be the morphism \((\psi\times \Id_X)\circ \iota\) where \(\iota\from B\times X\times X\to B\times X\times X\) is the automorphism that twists the second and third factors.
  Then the universal section family of \(\pi_0\) is \((X, \psi_0)\) and the universal section family of \(p\circ \pi_0\) is \((X\times X,\psi_0)\).
\end{proposition}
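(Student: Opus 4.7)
The plan is to verify both universal properties directly by unwinding the definition of $\funct{Sect}_{Y/S}(X)$ from \cref{def:Weil-rest-and-Usf} and applying the universal property of $(X,\psi)$ for $\pi$. The structural input is that $\pi_0=\pi\times \Id_X$ presents $\pi_0$ as the base change of $\pi$ along $p\from B\times X\to B$, so that $\mathcal{S}_0=\mathcal{S}\times X$ and sections of $\pi_0$ split into two components that can be analysed independently. I would then in each case fix a test scheme $T$, decompose a section into its $\mathcal{S}$-component and $X$-component, translate the $\mathcal{S}$-component using $(X,\psi)$, and read off the universal property.

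For the first assertion, a section of $\pi_0$ over $T$ is a $(B\times X)$-morphism $(B\times X)\times T\to \mathcal{S}\times X$. Compatibility with the projection to $B\times X$ forces the $X$-component to be the canonical projection, so the only remaining datum is a $B$-morphism $B\times X\times T\to \mathcal{S}$, which by the universal property of $(X,\psi)$ corresponds to a morphism from the parameter factor to $X$. The role of the twist $\iota$ in the definition of $\psi_0$ is to relabel the two copies of $X$ in the source $B\times X\times X$ so that this correspondence is recorded as a morphism $T\to X$ for which the pullback of $\psi_0$ along $\Id_{B\times X}\times (T\to X)$ recovers the original section; an explicit check using diagram~\eqref{dig:Usf} completes the verification.

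For the second assertion, the analogous unwinding applies to $p\circ \pi_0\from \mathcal{S}_0\to B$. A section over $T$ is a $B$-morphism $B\times T\to \mathcal{S}\times X$, which splits into an $\mathcal{S}$-component (a section of $\pi$ over $T$, corresponding by $(X,\psi)$ to a morphism $f_1\from T\to X$) and an $X$-component (yielding an independent morphism $f_2\from T\to X$). Packaging $(f_1,f_2)$ as a single morphism $T\to X\times X$ and comparing with the explicit formula $\psi_0(b,x_1,x_2)=(\psi(b,x_2),x_1)$ obtained from $(\psi\times \Id_X)\circ \iota$ shows that $\psi_0$ realises the bijection by pullback along $\Id_B\times (f_1,f_2)$.

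The only non-formal aspect of the argument is the bookkeeping of the various $X$-factors, and the twist $\iota$ is precisely what aligns conventions so that the formulas match. Once the labelling is set up, both representations follow formally from the universal property of $(X,\psi)$, as claimed by the statement; no extra geometric input is needed, which is why the author leaves the verification to the reader.
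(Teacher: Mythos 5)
The paper offers no proof to compare against here (it explicitly leaves the verification to the reader), so I can only assess your argument on its own terms, and it has a genuine gap located exactly where you declare the argument to be purely formal. Take the second assertion. A \(B\)-morphism \(B\times T\to \mathcal{S}_0=\mathcal{S}\times X\) does split into an \(\mathcal{S}\)-component and an \(X\)-component, and the \(\mathcal{S}\)-component is handled by the universal property of \((X,\psi)\) as you say; but the \(X\)-component is an arbitrary \(S\)-morphism \(B\times T\to X\), on which lying over \(B\) imposes no constraint, and you simply assert that it ``yields an independent morphism \(f_2\from T\to X\)''. That requires the morphism \(B\times T\to X\) to be constant along the fibres of the projection \(B\times T\to T\) in the sense of \cref{def:constant-along-the-fibres-2} and \cref{prop:const-along-fibres-and-rel-const}, which is not automatic for a quasiprojective \(X\) and a projective integral \(B\); it is precisely the condition the paper has to impose by hand in \cref{def:2-Urcf} (the requirement that \(q_1\circ\theta\) be constant along the fibres of \(B\times W\to W\)) before this proposition is invoked in the proof of \cref{prop:foctoring-morphisms-g'-gE}. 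The same problem occurs, more severely, in your first assertion: after reducing to a \(B\)-morphism \(B\times X\times T\to\mathcal{S}\), the universal property of \((X,\psi)\) applied to the test scheme \(X\times T\) produces a morphism \(X\times T\to X\), not a morphism \(T\to X\); the twist \(\iota\) only relabels factors and cannot collapse \(X\times T\) to \(T\). With \(T=S\), your claimed bijection would force every endomorphism of \(X\) to be a constant section, which is absurd.

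The upshot is that the statement cannot be established by bookkeeping over the ground scheme \(S\) alone. The first assertion becomes correct only if the universal section family of \(\pi_0\) is taken relative to the ground scheme \(X\), so that the test objects are \(X\)-schemes, \((B\times X)\times_X T=B\times T\), and the claim reduces to compatibility of the Weil restriction of \(\mathcal{S}\to B\) with the base change \(X\to S\); the second assertion needs either an additional hypothesis guaranteeing that \(S\)-morphisms \(B\times T\to X\) are constant along the fibres of \(B\times T\to T\), or a restriction of the claimed universal property to those sections of \(p\circ\pi_0\) whose \(X\)-component has this constancy property (which is what the later applications actually supply). Your closing sentence, that ``no extra geometric input is needed'', is exactly where the proof breaks; a correct write-up must identify and address this point.
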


Now, fix the following notation.
The scheme \(\mathcal{S}_1\) is the blow up of \(\mathcal{S}_0\) along the image \(\Delta\) of the section \((\psi\times_X \Id_X)\from B\times X\to \mathcal{S}\times X\) of \(\pi_0\), the scheme \(E\) is the exceptional divisor in \(\mathcal{S}_1\) and the morphism \(\pi_1\from \mathcal{S}_1\to B\times X\) is the composition of the blow up morphism \(\mathcal{S}_1\to \mathcal{S}_0\) and \(\pi_0\).
In addition, denote by \(q_1\from \mathcal{S}_1\to X\) the composition of the blow up morphism and the projection \(\mathcal{S}_0\to X\).
By \cite[\S 2]{brustenga-existeness-of-r-Urcf} and \cite[Corollary 3.10.2]{brustenga-existeness-of-r-Urcf}, there is a stratification of \(X\times X\) such that every irreducible component of \(\Cl_2\) is either (a) birational to a stratum or (b) composed entirely of clusters whose second section is infinitely near to the first.
\cref{thr:uni-ordinary-2-rcf-as-sbsf} below asserts that the blow up \S family \((X',b)\) of the projection \(B\times X\times X\to B\) along \((\psi_0)^{-1}(\Delta)\) (see~\cref{def:blowup-sections-family}) is the union, with its non-necessarily reduced structure, of the kind (a) irreducible components of \(\Cl_2\).

To finish this section we show that the kind (b) irreducible components of \(\Cl_2\), the ones missing in the blow up \S family construction, may only emerge from the universal section family of \(E\to B\), see \cref{thr:factoring-through-both-X'-XEc}.

\begin{notation}\label{not:second-universal-sf-and-b'}
  We denote the universal section family of \((p\circ \pi_1)\from \mathcal{S}_1\to B\) by \((X_1,\psi_1)\) (see~\cref{def:Weil-rest-and-Usf}).

  We denote by \(b'\from B\times X'\to \mathcal{S}_1\) the unique morphism whose composition with the blow up morphism \(\mathcal{S}_1\to \mathcal{S}_0\) is equal to \(\Id_B\times b\).
\end{notation}

\begin{definition}\label{def:2-Urcf}
  A \emph{2-relative cluster family of \(\pi\)} is a section family \((W,\theta)\) of \((p\circ \pi_1)\from S_1\to B\) such that the morphism \(q_1\circ \theta\) is constant along the fibres of the projection \(B\times W\to W\).

  A \emph{universal 2-relative cluster family of \(\pi\)} is a 2-relative cluster family \((\Cl_2,\rho)\) of \(\pi\) that satisfies the following universal property.
  For every 2-relative cluster family \((W,\theta)\) of \(\pi\), there is a unique morphism \(f\from W\to \Cl_2\) such that \(\theta=\rho\circ (\Id_B\times f)\).
\end{definition}

Notice that \cref{def:2-Urcf} is simpler than but equivalent to \cite[Definition 2.19, p.10]{brustenga-existeness-of-r-Urcf}.
That is because here we use the existence of the universal section family of \(\pi\).
Recall that, by \cref{prop:const-along-fibres-and-rel-const}, for every 2-relative cluster family \((W,\theta)\) of \(\pi\), the morphism \(q_1\circ \theta\from B\times W\to X\) is constant along the fibres of the projection \(B\times W\to W\) if and only if there is a morphism \(W\to X\) such that \(q_1\circ \theta\) commutes with the composition \(B\times W\to W\to X\).

When a universal 2-relative cluster family of \(\pi\) exists, by abstract nonsense it is unique up to unique isomorphism.

\begin{theorem}\label{thr:2-Urcf-by-constfy}
  The \((q_1\circ\psi_1)\)-constfy closed subscheme \(X_1^{\cns}\) of \(X_1\) exists and, setting \(\psi^{\cns}=\psi_1|_{B\times X_1^{\cns}}\), the couple \((X_1^{\cns},\psi^{\cns})\) is the universal 2-relative cluster family of \(\pi\from \mathcal{S}\to B\).
\end{theorem}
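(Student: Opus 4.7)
The plan is to combine two universal properties: that of the universal section family $(X_1,\psi_1)$ and that of the $f$-constfy construction for $f=q_1\circ\psi_1$.

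First I would establish existence of the $(q_1\circ\psi_1)$-constfy closed subscheme of $X_1$ by invoking \cref{thr:constantfying-closed} with the projection $B\times X_1\to X_1$ playing the role of ``$p$'' and with $X$ playing the role of ``$W$''. The projection $B\times X_1\to X_1$ is the pullback of $B\to S$ and therefore proper (as $B$ is projective) and flat (since $B$ is flat over the ground, a standing hypothesis already needed to build $(X_1,\psi_1)$ via \cref{thr:Usf-representable}). The target $X$, being the universal section family of $\pi$, is quasiprojective and hence separated over $S$. Thus the hypotheses of \cref{thr:constantfying-closed} hold and $X_1^{\cns}\tohook X_1$ exists. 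By construction, $\psi^{\cns}=\psi_1|_{B\times X_1^{\cns}}$ is a $B$-morphism (hence a section family of $p\circ\pi_1$ indexed by $X_1^{\cns}$), and $q_1\circ\psi^{\cns}$ is constant along the fibres of $B\times X_1^{\cns}\to X_1^{\cns}$ by the very definition of the constfy subscheme; so $(X_1^{\cns},\psi^{\cns})$ is a 2-relative cluster family of $\pi$.

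For the universal property, let $(W,\theta)$ be another 2-relative cluster family of $\pi$. Applying the universal property of $(X_1,\psi_1)$ to the section $\theta$ yields a unique $g\from W\to X_1$ with $\theta=\psi_1\circ(\Id_B\times g)$. Under the canonical isomorphism $(B\times X_1)\times_{X_1}W\cong B\times W$, the hypothesis that $q_1\circ\theta$ is constant along the fibres of $B\times W\to W$ translates directly into the defining condition for $g$ to factor through $X_1^{\cns}\tohook X_1$; this produces a unique $f\from W\to X_1^{\cns}$, and then $\theta=\psi^{\cns}\circ(\Id_B\times f)$ follows by composing. Uniqueness of $f$ is automatic: any competitor $f'$ composed with the closed embedding would give a second solution for $g$, contradicting the uniqueness in the universal property of $(X_1,\psi_1)$.

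The main obstacle I expect is essentially bookkeeping: one must carefully identify the Cartesian datum ``$X_T\to T$'' from \cref{thr:constantfying-closed} with the Cartesian square $(B\times X_1)\times_{X_1}W\cong B\times W$ used in \cref{def:2-Urcf}, and verify that the separatedness and flatness/properness hypotheses of \cref{thr:constantfying-closed} really hold in this setting. Once that identification is pinned down, the two universal properties compose transparently, so there should be no substantial geometric difficulty beyond this matching step.
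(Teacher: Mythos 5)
Your proposal is correct and follows essentially the same route as the paper, whose proof simply states that the result is immediate from the universal properties of the universal section family \((X_1,\psi_1)\) and of the \((q_1\circ\psi_1)\)-constfy closed subscheme; you have merely spelled out the verification of the hypotheses of \cref{thr:constantfying-closed} and the composition of the two universal properties. No discrepancy to report.
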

\begin{proof}
  It follows immediately from the universal properties of the universal section family \((X_1,\psi_1)\) of \((p\circ \pi_1)\from \mathcal{S}_1\to B\) and of the \((q_1\circ\psi_1)\)-constfy closed subscheme \(X_1^{\cns}\) of \(X_1\).
\end{proof}

\begin{proposition}\label{prop:Usf-of-E-to-B}
  Let \(X_E\) be the closed subscheme of \(X_1\) representing the functor \(\fiso_{X_1/B}^{\psi_1^{-1}(E)}\) and set \(\psi_E=\psi_1|_{B\times X_E}\).
  Then, the couple \((X_E,\psi_E)\) is the universal section family of \(E\to B\).
\end{proposition}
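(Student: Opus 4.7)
The plan is to chain two universal properties---that of \((X_1,\psi_1)\) as universal section family and that of \(X_E\) as the representing scheme of the \(\fiso\) functor---to establish a natural bijection \(\sch_B(T,X_E)\simeq \sch_B(B\times T,E)\) for every \(B\)-scheme \(T\). First I would invoke the universal property of \((X_1,\psi_1)\) (see the diagram following \cref{rmk:Usf-representable-for-picewise-qp}) to identify a \(B\)-morphism \(f\from T\to X_1\) with the section \(\sigma = \psi_1\circ (\Id_B\times f)\from B\times T \to \mathcal{S}_1\) it produces.

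Next I would combine two equivalent criteria for when such a \(\sigma\) factorises through the closed embedding \(E\tohook \mathcal{S}_1\). By \cref{lem:iso-iff-majorize} applied to the monomorphism \(E\tohook \mathcal{S}_1\), the factorisation exists (and is then unique) if and only if the closed embedding \(\sigma^{-1}(E)\tohook B\times T\) is an isomorphism. On the other hand, by the defining universal property of \(X_E\) as the scheme representing \(\fiso_{X_1/B}^{\psi_1^{-1}(E)}\), the morphism \(f\) factorises through \(X_E\tohook X_1\) exactly when \(\psi_1^{-1}(E)\times_{X_1}T \tohook B\times T\) is an isomorphism. A direct pasting argument on Cartesian squares applied to \(\Id_B\times f\from B\times T\to B\times X_1\) and \(\psi_1\from B\times X_1\to \mathcal{S}_1\) identifies \(\sigma^{-1}(E)\) with \(\psi_1^{-1}(E)\times_{X_1}T\) as closed subschemes of \(B\times T\), so the two conditions coincide.

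Chaining everything yields the desired bijection \(\sch_B(T,X_E)\simeq \sch_B(B\times T,E)\), natural in \(T\), which is exactly the universal property of \((X_E,\psi_E)\) as the universal section family of \(E\to B\). Specialising the bijection to \(T=X_E\) and \(f=\Id_{X_E}\) produces, on the right hand side, the required factorisation \(\psi_E\from B\times X_E\to E\) of \(\psi_1|_{B\times X_E}\) through \(E\tohook \mathcal{S}_1\); equivalently, this comes directly from \cref{lem:iso-iff-majorize} applied to the isomorphism \(\psi_1^{-1}(E)\times_{X_1}X_E\tohook B\times X_E\) obtained by evaluating the universal property of \(X_E\) at \(\Id_{X_E}\). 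No step is genuinely hard: the proposition records the expected compatibility between the Weil-restriction construction producing \(X_1\) and the isofying construction producing \(X_E\), and the only point needing explicit care is the Cartesian-square identification \(\sigma^{-1}(E) = \psi_1^{-1}(E)\times_{X_1}T\) which makes the matching at \(T\)-points visible.
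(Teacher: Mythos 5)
Your proposal is correct and follows essentially the same route as the paper: both chain the universal property of \((X_1,\psi_1)\) with the representability of \(\fiso_{X_1/B}^{\psi_1^{-1}(E)}\), using \cref{lem:iso-iff-majorize} together with the pasting of Cartesian squares to translate \enquote{\(\sigma\) factorises through \(E\tohook\mathcal{S}_1\)} into \enquote{the base change of \(\psi_1^{-1}(E)\tohook B\times X_1\) along \(f\) is an isomorphism}. The only cosmetic difference is that you package the argument as a natural bijection on \(T\)-points while the paper verifies the universal property directly on a given section family \((Y,\rho)\); note also that the left-hand side of your bijection should be morphisms over the ground scheme rather than over \(B\).
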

\begin{proof}
  Clearly the couple \((X_E,\psi_E)\) is a section family of \(E\to B\), let us check that it satisfies the required universal property.

  Let \((Y,\rho)\) be a section family of \(E\to B\).
  By the universal property of \((X_1,\psi_1)\) there is a morphism \(f\from Y\to X_1\) such that \(\rho=\psi_1\circ (\Id_B\times f)\).
  Then, by the transitivity of the Cartesian product and \cref{lem:iso-iff-majorize}, the base change of \(\psi_1^{-1}(E)\tohook B\times X_1\) by \(f\from Y\to X_1\) is an isomorphism.
  So, \((f\from Y\to X_1)\in \fiso_{X_1/B}^{\psi_1^{-1}(E)}(Y)\) and there is a unique morphism \(g\from Y\to X_E\) whose composition with \(X_E\tohook X_1\) is \(f\).
  Now, using that \(E\tohook S_1\) is a monomorphism, it is straightforward to check that \(\rho=\psi_E\circ (\Id_B\times g)\).
\end{proof}

\begin{notation}\label{not:W'-of-2-rcf-W}
  Let \((W,\theta)\) be a 2-relative cluster family of \(\pi\).
  Let \(E_W\) be the pullback of \(E\tohook \mathcal{S}_1\) by \(\theta\) (which is a locally principal subscheme of \(B\times W\)).
  We denote by \(W'\) the closed subscheme of \(W\) for which the closed embedding \(B\times W'\tohook B\times W\) is the blow up of \(B\times W\) along \(E_W\) (see~\cref{thr:blowup-locally-ppal-product-form}).
\end{notation}

\begin{notation}\label{not:W_E-of-2-rcf-W}
  Let \((W,\theta)\) be a 2-relative cluster family of \(\pi\).
  We denote by \(W_E\) the closed subscheme of \(W\) representing the functor \(\fiso_{W/B}^{E_W}\) (see~\cref{thr:closed-isofying}).
\end{notation}

For the particular case \(W=X_1^{\cns}\), we simplify the notation \((X_1^{\cns})'\) by \(X_1^{\cns\prime}\) and \((X_1^{\cns})_E\) by \(X^{\cns}_E\).
In addition we denote respectively by \(\psi_1^{\cns}\) and \(\psi^{\cns}_E\) the restrictions of \(\psi^{\cns}\from B\times X_1^{\cns}\to \mathcal{S}_1\) to \(B\times X_1^{\cns\prime}\) and \(B\times X^{\cns}_E\).
\medskip

\begin{remark}\label{rmk:double-def-of-X_E-cns}
  The scheme \(X^{\cns}_E\) is isomorphic to the \((q_1\circ \psi_E\from B\times X_E\to X)\)-constfy closed subscheme of \(X_E\), see \cref{rmk:commutes-of-iso-and-constfy}.
\end{remark}

\begin{proposition}\label{prop:uni-ordinary-2-rel-cluster-f-as-bowup}
  The couple \((X_1^{\cns\prime},\psi_1^{\cns\prime})\) satisfies the following universal property.
  For all 2-relative cluster family \((W,\theta)\) of \(\pi\) such that the pullback \(\theta^{-1}(E)\) is an effective Cartier divisor of \(B\times W\), there is a unique morphism \(f\from W\to X_1^{\cns\prime}\) with \(\theta=\psi_1^{\cns\prime}\circ(\Id_B\times f)\).
\end{proposition}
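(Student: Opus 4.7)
The plan is to chain the universal property of $(X_1^{\cns},\psi^{\cns})$ as the universal 2-relative cluster family (\cref{thr:2-Urcf-by-constfy}) with the universal property of the blow up given by the second half of \cref{thr:blowup-locally-ppal-product-form}. All the substantive work is already packaged into those two results, so the argument becomes a diagram chase checking that the effective Cartier divisor hypothesis transports correctly through the intermediate factorisation.

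First I would fix a 2-relative cluster family $(W,\theta)$ of $\pi$ with $\theta^{-1}(E)$ an effective Cartier divisor of $B\times W$. By \cref{thr:2-Urcf-by-constfy} there is a unique morphism $g\from W\to X_1^{\cns}$ with $\theta=\psi^{\cns}\circ(\Id_B\times g)$. Since $E_{X_1^{\cns}}=(\psi^{\cns})^{-1}(E)$ by \cref{not:W'-of-2-rcf-W}, the identity
\[
(\Id_B\times g)^{-1}(E_{X_1^{\cns}})=\theta^{-1}(E)
\]
shows that the pullback of $E_{X_1^{\cns}}$ along $\Id_B\times g$ is an effective Cartier divisor of $B\times W$.

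Next I would invoke the second part of \cref{thr:blowup-locally-ppal-product-form}, applied with $X_1^{\cns}$, $B$, $E_{X_1^{\cns}}$ and $X_1^{\cns\prime}$ playing the roles of $X$, $Y$, $Z$ and $W$ (which is precisely the setting of \cref{not:W'-of-2-rcf-W}, so the hypotheses are met). This yields a unique morphism $f\from W\to X_1^{\cns\prime}$ such that $i\circ f=g$, where $i\from X_1^{\cns\prime}\tohook X_1^{\cns}$ denotes the closed embedding. Since $\psi_1^{\cns\prime}$ is by definition $\psi^{\cns}\circ(\Id_B\times i)$, one then computes $\psi_1^{\cns\prime}\circ(\Id_B\times f)=\psi^{\cns}\circ(\Id_B\times g)=\theta$.

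For uniqueness, suppose $f'\from W\to X_1^{\cns\prime}$ also satisfies $\theta=\psi_1^{\cns\prime}\circ(\Id_B\times f')$. Then $i\circ f'$ fulfils the defining identity of $g$ in the universal property of \cref{thr:2-Urcf-by-constfy}, so $i\circ f'=g=i\circ f$, and since $i$ is a monomorphism we conclude $f'=f$. The one step that would genuinely have required new input, namely producing the factorisation of $g$ through the blow up subscheme $X_1^{\cns\prime}$ directly from Cartier data on $B\times W$, is precisely what the Cartesian-product form of \cref{thr:blowup-locally-ppal-product-form} accomplishes by descending effective Cartier-divisor conditions from $B\times(-)$ to the base; all the remaining effort is bookkeeping.
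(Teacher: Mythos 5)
Your proof is correct and takes essentially the same route as the paper: first the universal property of \((X_1^{\cns},\psi^{\cns})\) from \cref{thr:2-Urcf-by-constfy}, then descent of the blow-up factorisation from \(B\times(-)\) to the base. The only cosmetic difference is that you invoke the packaged second half of \cref{thr:blowup-locally-ppal-product-form}, whereas the paper unfolds that same step into the universal property of the blow up \(B\times X_1^{\cns\prime}\tohook B\times X_1^{\cns}\) followed by \cref{lem:product-form-1}.
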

\begin{proof}
  Let \((W,\theta)\) be such a 2-relative cluster family of \(\pi\).
  By the universal property of \((X_1^{\cns},\psi^{\cns})\) (see~\cref{thr:2-Urcf-by-constfy}), there is a unique morphism \(f\from W\to X_1^{\cns}\) with \(\theta=\psi^{\cns}\circ(\Id_B\times f)\).
  By the universal property of the blow up \(\Id_B\times i\from B\times X_1^{\cns\prime}\tohook B\times X_1^{\cns}\), there is a unique morphism \(\varphi\from B\times W\to B\times X_1^{\cns\prime}\) with \(\Id_B\times f=\varphi\circ (\Id_B\times i)\).
  Finally, \cref{lem:product-form-1} asserts that the morphism \(\varphi\) is the product of \(\Id_B\) with a unique morphism \(W\to X_1^{\cns\prime}\).
\end{proof}

\begin{proposition}\label{prop:foctoring-morphisms-g'-gE}
  Let \((W,\theta)\) be a 2-relative cluster family of \(\pi\).
  Then, there are unique morphisms \(g'\from W'\to X'\) and \(g_E\from W_E\to X_E^{\cns}\) such that \(\theta|_{B\times W'}=\psi_1^{\cns\prime}\circ (\Id_B\times g')\) and \(\theta|_{B\times W_E}=\psi^{\cns}_E\circ(\Id_B\times g_E)\).
\end{proposition}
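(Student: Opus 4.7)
The approach is to build $g'$ and $g_E$ separately by invoking universal properties already established. Both rely on the preliminary observation that $(W', \theta|_{B\times W'})$ and $(W_E, \theta|_{B\times W_E})$ are themselves 2-relative cluster families of $\pi$: since $W'$ and $W_E$ are closed subschemes of $W$ (see~\cref{not:W'-of-2-rcf-W,not:W_E-of-2-rcf-W}), base change yields sections of $(p\circ\pi_1)\from \mathcal{S}_1\to B$, and the constancy of $q_1\circ\theta$ along the fibres of $B\times W\to W$ (see~\cref{def:constant-along-the-fibres-2}) is inherited under these base changes.

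For $g'$ I would apply \cref{prop:uni-ordinary-2-rel-cluster-f-as-bowup} to $(W', \theta|_{B\times W'})$. Its hypothesis requires $(\theta|_{B\times W'})^{-1}(E)$ to be an effective Cartier divisor of $B\times W'$; but by \cref{not:W'-of-2-rcf-W} the closed embedding $B\times W'\tohook B\times W$ is the blow up of $B\times W$ along $E_W=\theta^{-1}(E)$, so the Cartier divisor condition is immediate from the universal property of blow ups. The proposition then produces the unique $g'$ together with the desired equation $\theta|_{B\times W'}=\psi_1^{\cns\prime}\circ(\Id_B\times g')$.

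For $g_E$ I would first use \cref{thr:2-Urcf-by-constfy} to obtain the unique $f\from W\to X_1^{\cns}$ with $\theta=\psi^{\cns}\circ(\Id_B\times f)$, and then show that the restriction of $f$ to $W_E$ factors through $X_E^{\cns}\tohook X_1^{\cns}$. By transitivity of the fibre product, the pullback of $(\psi^{\cns})^{-1}(E)\tohook B\times X_1^{\cns}$ along $\Id_B\times (f|_{W_E})$ equals the pullback of $E_W$ to $B\times W_E$, which by the defining property of $W_E$ is all of $B\times W_E$. The universal property of $X_E^{\cns}=(X_1^{\cns})_E$ as the representing scheme of the corresponding $\fiso$-functor (see~\cref{not:W_E-of-2-rcf-W,thr:closed-isofying}) then supplies the required $g_E$, and the stated equation follows from $\psi^{\cns}_E=\psi^{\cns}|_{B\times X^{\cns}_E}$. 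Uniqueness in both cases is automatic from the universal properties invoked.

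I expect the only obstacle to be careful bookkeeping of the various pullback diagrams---notably verifying that the two-step pullback through $W_E\to W\to X_1^{\cns}$ agrees with the direct pullback through $\theta|_{B\times W_E}$, and that restricting $f$ along $W_E\tohook W$ is compatible with taking $\Id_B\times(-)$---since all the genuine content has already been packaged into \cref{thr:2-Urcf-by-constfy}, \cref{prop:uni-ordinary-2-rel-cluster-f-as-bowup} and \cref{thr:closed-isofying}.
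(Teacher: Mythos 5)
Your construction of \(g_E\) is sound and essentially equivalent to the paper's: you stay inside \(X_1^{\cns}\) and use the representability of \(\fiso_{X_1^{\cns}/B}^{(\psi^{\cns})^{-1}(E)}\), whereas the paper first views \(W_E\) as a section family of \(E\to B\) and invokes \cref{prop:Usf-of-E-to-B} before cutting down by the constfy subscheme; both routes deliver the same \(g_E\).

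The \(g'\) part has a genuine gap. You produce a morphism \(W'\to X_1^{\cns\prime}\) by applying \cref{prop:uni-ordinary-2-rel-cluster-f-as-bowup} to \((W',\theta|_{B\times W'})\), but the proposition asks for a morphism \(g'\from W'\to X'\), where \(X'\) is the blow up \S family of the projection \(B\times X\times X\to B\) along \(\psi_0^{-1}(\Delta)\) --- an a priori different scheme. (The displayed equation \(\theta|_{B\times W'}=\psi_1^{\cns\prime}\circ(\Id_B\times g')\) is a slip for \(b'\circ(\Id_B\times g')\): the paper's own proof explicitly invokes the universal property of \((X',b)\) to land in \(X'\), and the sole purpose of this proposition, combined with \cref{rmk:factoring-through-X'}, is to prove \cref{thr:uni-ordinary-2-rcf-as-sbsf}, namely that \((X',b')\) satisfies the \emph{same} universal property as \((X_1^{\cns\prime},\psi_1^{\cns\prime})\). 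Identifying the two codomains at this stage, as your reading implicitly does, presupposes that theorem and makes the overall argument circular.) The missing step is the one the paper supplies: compose \(\theta\) with the blow up \(\mathcal{S}_1\to\mathcal{S}_0\) to obtain a section family of \(\pi_0\), hence by \cref{prop:Usf-of-S-0} a unique \(h\from W\to X\times X\); observe that \(E_W\) is the pullback of \(\psi_0^{-1}(\Delta)\) by \(\Id_B\times h\), so its further pullback to \(B\times W'\) is an effective Cartier divisor; then apply the universal property of the blow up \S family \((X',b)\) from \cref{def:blowup-sections-family} to \(h\circ i\from W'\to X\times X\). Your verification that the Cartier condition holds on \(B\times W'\) is correct and is exactly the hypothesis that universal property needs, but it must be fed to \((X',b)\), not to \((X_1^{\cns\prime},\psi_1^{\cns\prime})\).
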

\begin{proof}
  Let denote by \(i\) the closed embedding \(W'\tohook W\).
  Since the composition of \(\theta_{B\times W'}\from B\times W\to \mathcal{S}_1\) with the blow up \(\mathcal{S}_1\to \mathcal{S}_0\) is a section family of \(\pi_0\from \mathcal{S}_0\to B\), by \cref{prop:Usf-of-S-0}, there is a unique morphism \(h\from W\to X\times X\) such that the corresponding diagram (\ref{dig:Usf}) commutes.

  Since \(E_W\tohook B\times W\) is also the pullback of \(\psi_0^{-1}(\Delta)\tohook B\times X\times X\) by \(\Id_B\times h\),
  the pullback of \(\psi_0^{-1}(\Delta)\) by \(\Id_B\times (h\circ i)\) is an effective Cartier divisor of \(B\times W'\) and then such unique morphism \(g\from W\to X'\) exists by the universal property of \((X',b)\).

  The base change of \(E_W\tohook B\times W\) by \(W_E\tohook W\) is an isomorphism and, via its inverse, \(W_E\) is a section family of \(E\to B\).
  Hence, first by the universal property of \((X_E,\psi_E)\) and second by the universal property of the \((q_1\circ \psi_E)\)-constfy closed subscheme \(X_E^{\cns}\) of \(X_E\) (see~\cref{rmk:double-def-of-X_E-cns}), such morphism \(g_E\from W_E\to X_E^{\cns}\) exists.
\end{proof}

\begin{remark}\label{rmk:factoring-through-X'}
  Let \((W,\theta)\) be a 2-relative cluster family of \(\pi\).
  If \(E_W\tohook B\times W\) is an effective Cartier divisor, then \(W=W'\).
\end{remark}

\begin{theorem}\label{thr:uni-ordinary-2-rcf-as-sbsf}
  The couple \((X',b')\) satisfies the same universal property of \((X_1^{\cns\prime},\psi_1^{\cns\prime})\) (see~\cref{prop:uni-ordinary-2-rel-cluster-f-as-bowup}).
\end{theorem}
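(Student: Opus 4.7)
The plan is to reduce the statement to the universal property of the blow up \S family $(X',b)$ of the projection $B\times X\times X\to B$ along $\psi_0^{-1}(\Delta)$, combined with the classical universal property of the blow up $\bl\from\mathcal{S}_1\to\mathcal{S}_0$. Throughout, fix a 2-relative cluster family $(W,\theta)$ of $\pi$ for which $\theta^{-1}(E)\tohook B\times W$ is an effective Cartier divisor; I will produce a unique $f\from W\to X'$ with $\theta=b'\circ(\Id_B\times f)$.

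First I would compose $\theta$ with $\bl$, obtaining a section family $\bl\circ\theta$ of $p\circ\pi_0\from \mathcal{S}_0\to B$. By \cref{prop:Usf-of-S-0}, the universal section family of $p\circ\pi_0$ is $(X\times X,\psi_0)$, so there is a unique morphism $h\from W\to X\times X$ with
\[
  \bl\circ\theta=\psi_0\circ(\Id_B\times h).
\]
Next, I would compute the pullback of the centre $\psi_0^{-1}(\Delta)$ along $\Id_B\times h$: by functoriality of pullbacks it equals $\theta^{-1}(\bl^{-1}(\Delta))=\theta^{-1}(E)$, which is an effective Cartier divisor by hypothesis. Hence the universal property of the blow up \S family $(X',b)$ (\cref{def:blowup-sections-family}) yields a unique morphism $f\from W\to X'$ satisfying $b\circ f=h$.

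It remains to check the identity $\theta=b'\circ(\Id_B\times f)$. The computation
\[
  \bl\circ b'\circ(\Id_B\times f)=\psi_0\circ(\Id_B\times b)\circ(\Id_B\times f)=\psi_0\circ(\Id_B\times h)=\bl\circ\theta
\]
uses only the definition of $b'$ (\cref{not:second-universal-sf-and-b'}) and the construction of $h$ and $f$. So both $\theta$ and $b'\circ(\Id_B\times f)$ are lifts through $\bl$ of the same morphism $\bl\circ\theta\from B\times W\to\mathcal{S}_0$; since the pullback of $\Delta$ along $\bl\circ\theta$ equals $\theta^{-1}(E)$ and is therefore effective Cartier, the universal property of the blow up $\mathcal{S}_1\to\mathcal{S}_0$ forces the two lifts to coincide.

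Finally, uniqueness of $f$ will follow by reversing the construction: if $\tilde f$ also satisfies $\theta=b'\circ(\Id_B\times\tilde f)$, then composing with $\bl$ gives $\psi_0\circ(\Id_B\times (b\circ\tilde f))=\bl\circ\theta=\psi_0\circ(\Id_B\times h)$, so $b\circ\tilde f=h$ by the universal property from \cref{prop:Usf-of-S-0}, and then $\tilde f=f$ by the uniqueness clause in the universal property of $(X',b)$. The only delicate step is the third paragraph: one must not confuse the universal property of $(X',b)$ (which only sees maps into $X\times X$) with that of the blow up $\bl$ (which lifts maps into $\mathcal{S}_0$); the argument needs both, and the hypothesis that $\theta^{-1}(E)$ be an effective Cartier divisor is exactly what activates the second one.
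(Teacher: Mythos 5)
Your argument is correct and is essentially the paper's own route: the stated proof just cites \cref{prop:foctoring-morphisms-g'-gE} (whose proof is precisely your first two paragraphs — compose with $\bl$, invoke \cref{prop:Usf-of-S-0} to get $h$, identify $(\Id_B\times h)^{-1}(\psi_0^{-1}(\Delta))$ with $\theta^{-1}(E)$, and apply the universal property of $(X',b)$) together with \cref{rmk:factoring-through-X'}. You additionally make explicit the verification $\theta=b'\circ(\Id_B\times f)$ via the uniqueness of lifts through the blow up $\bl\from\mathcal{S}_1\to\mathcal{S}_0$, and the uniqueness of $f$, both of which the paper leaves implicit.
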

\begin{proof}
  If follows from \cref{prop:foctoring-morphisms-g'-gE} and \cref{rmk:factoring-through-X'}.
\end{proof}

\begin{corollary}\label{coro:closedness-of-X'-in-X1}
  The scheme \(X'\) is a closed subscheme of \(X_1^{\cns}\) and \(X_1\).
\end{corollary}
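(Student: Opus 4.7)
The plan is to use Theorem \ref{thr:uni-ordinary-2-rcf-as-sbsf} together with uniqueness of universal objects to transport the closed-embedding structure of $X_1^{\cns\prime}$ over to $X'$. More precisely, both $(X',b')$ and $(X_1^{\cns\prime},\psi_1^{\cns\prime})$ satisfy the universal property stated in Proposition \ref{prop:uni-ordinary-2-rel-cluster-f-as-bowup}, so standard abstract nonsense yields a unique isomorphism $\varphi\from X'\to X_1^{\cns\prime}$ compatible with the structure maps.

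The next step is to observe that $X_1^{\cns\prime}\tohook X_1^{\cns}$ is a closed embedding by construction. Indeed, unwinding \cref{not:W'-of-2-rcf-W} applied to the 2-relative cluster family $(X_1^{\cns},\psi^{\cns})$, the scheme $X_1^{\cns\prime}$ is produced via \cref{thr:blowup-locally-ppal-product-form} as the closed subscheme $W$ of $X_1^{\cns}$ such that $B\times X_1^{\cns\prime}\tohook B\times X_1^{\cns}$ is the blow up of $B\times X_1^{\cns}$ along $E_{X_1^{\cns}}$. In turn, $X_1^{\cns}$ is a closed subscheme of $X_1$, being the $(q_1\circ\psi_1)$-constfy closed subscheme (\cref{thr:2-Urcf-by-constfy}, which relies on \cref{def:f-constfy}).

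Composing the two closed embeddings with the isomorphism $\varphi$ then realises $X'$ as a closed subscheme of $X_1^{\cns}$ via $X'\overset{\varphi}{\to}X_1^{\cns\prime}\tohook X_1^{\cns}$, and as a closed subscheme of $X_1$ by post-composing with $X_1^{\cns}\tohook X_1$. There is essentially no obstacle here beyond being careful that the isomorphism delivered by \cref{thr:uni-ordinary-2-rcf-as-sbsf} is genuinely an isomorphism in the slice category over $\mathcal{S}_1$ (so that the closed embedding into $X_1^{\cns}$ transported along $\varphi$ matches the embedding one would write down directly); but this is immediate from the fact that both universal objects are characterised by the same data $(B\times (-)\to \mathcal{S}_1)$.
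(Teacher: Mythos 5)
Your proposal is correct and matches the paper's (implicit) argument: the corollary is stated without proof precisely because it follows from \cref{thr:uni-ordinary-2-rcf-as-sbsf} by uniqueness of objects satisfying the universal property of \cref{prop:uni-ordinary-2-rel-cluster-f-as-bowup}, combined with the closed embeddings \(X_1^{\cns\prime}\tohook X_1^{\cns}\tohook X_1\) coming from \cref{not:W'-of-2-rcf-W} and \cref{thr:2-Urcf-by-constfy}. Nothing to add.
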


\begin{proposition}\label{prop:factoring-integral-case}
  Let \((W,\theta)\) be a 2-relative cluster family of \(\pi\) with \(W\) integral.
  Then the scheme \(W\) is equal to either \(W'\) or \(W_E\).
\end{proposition}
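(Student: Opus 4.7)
The plan is to reduce to a dichotomy on the locally principal ideal defining \(E_W\) in \(B\times W\), using the integrality of \(W\). First I would verify that \(B\times W\) is integral: throughout this section \(B\to S\) is tacitly assumed to have geometrically integral fibres (as inherited from the hypotheses of \cref{thr:blow-up-S-family-exists,thr:blowup-locally-ppal-product-form}, on which the very existence of \(W'\) rests). The base change \(B\times W\to W\) is then flat with geometrically integral fibres, and since \(W\) is integral, so is \(B\times W\); in particular it has a unique associated point, namely its generic point \(\eta\).

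Next I would split into two cases depending on whether \(\eta\in E_W\). In the case \(\eta\in E_W\), density of \(\{\eta\}\) together with closedness of \(E_W\) would force \(E_W=B\times W\) set-theoretically; a local generator of the defining locally principal ideal then lies in every prime of a domain and hence vanishes, so \(E_W=B\times W\) as closed subschemes. This would make the closed embedding \(E_W\tohook B\times W\) an isomorphism, so the universal property of \(W_E\) (\cref{thr:closed-isofying}) applied to \(\Id_W\) would provide a section of the closed embedding \(W_E\tohook W\), forcing \(W=W_E\).

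In the case \(\eta\notin E_W\), we have \(\ass{B\times W}\cap E_W=\emptyset\), so I would invoke \cref{thr:2n-loc-ppal-blowup} to identify the blow up of \(B\times W\) along \(E_W\) with \(B\times W\) itself. By \cref{not:W'-of-2-rcf-W} this would mean \(B\times W'=B\times W\), that is \(W'=W\); equivalently, a local generator of the defining ideal of \(E_W\) is nonzero in a domain and hence a non-zerodivisor, so \(E_W\) is an effective Cartier divisor and \cref{rmk:factoring-through-X'} applies directly.

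The main delicate point I expect is the opening reduction that \(B\times W\) is integral, since it depends on tracking the implicit geometric-integrality hypothesis on the fibres of \(B\to S\) that underlies the whole section. Once that is secured, the rest is a clean read-off of the universal properties of \(W_E\) and \(W'\) combined with the description of blow ups along locally principal subschemes of integral locally Noetherian schemes provided by \cref{thr:2n-loc-ppal-blowup}.
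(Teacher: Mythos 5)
Your proof is correct and follows essentially the same route as the paper's: the paper's entire argument is that, since \(W\) and \(B\) are integral, the locally principal subscheme \(E_W\) of \(B\times W\) is either an effective Cartier divisor (whence \(W=W'\) by \cref{rmk:factoring-through-X'}) or all of \(B\times W\) (whence \(W=W_E\) trivially), which is precisely your dichotomy on whether a local generator is a non-zerodivisor or zero in a domain. Your only addition is to make explicit the integrality of \(B\times W\) (via the implicit geometric-integrality hypothesis on \(B\to S\)), a point the paper passes over silently; the detour through \cref{thr:2n-loc-ppal-blowup} in the second case is harmless but not needed.
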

\begin{proof}
  Since \(W\) and \(B\) are integral, the locally principal closed subscheme \(E_W\) of \(B\times W\) is either an effective Cartier divisor or isomorphic to \(B\times W\).
  So, for the former case the claim follows from \cref{rmk:factoring-through-X'} and for the later it is trivial.
\end{proof}

\begin{theorem}\label{thr:factoring-through-both-X'-XEc}
  Let \((W,\theta)\) be a 2-relative cluster family of \(\pi\).
  The scheme \(W_{\red}\) is a closed subscheme of the schematic union \(W'+W_E\) (see~\cref{not:schematic-unions}) of \(W'\) and \(W_E\).
  In particular, the underlying topological spaces of \(X'+X_E^{\cns}\) and \(X_1^{\cns}\) are homeomorphic.
\end{theorem}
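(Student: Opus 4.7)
The plan is to reduce the statement to the set-theoretic equality $|W|=|W'|\cup|W_E|$, from which the scheme-theoretic containment follows immediately. Indeed, the schematic union $W'+W_E$ has underlying topological space $|W'|\cup|W_E|$; once this is shown to equal $|W|$, both $W_{\red}$ and $(W'+W_E)_{\red}$ are the unique reduced closed subscheme of $W$ supported on $|W|$, hence coincide, and in particular $W_{\red}$ sits inside $W'+W_E$.

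For the set-theoretic equality, I fix a point $w\in W$ and set $T=\overline{\{w\}}_{\red}\tohook W$. First one checks that $(T,\theta|_{B\times T})$ is again a 2-relative cluster family of $\pi$: restricting a $B$-morphism yields a $B$-morphism, and being constant along the fibres of a projection (\cref{def:constant-along-the-fibres-2}) is stable under base change of the target. Since $T$ is integral, \cref{prop:factoring-integral-case} applied to $(T,\theta|_{B\times T})$ gives either $T=T'$ or $T=T_E$. In the first case $B\times T'\tohook B\times T$ is the identity, i.e.\ $E_T\tohook B\times T$ is an effective Cartier divisor, and the second half of \cref{thr:blowup-locally-ppal-product-form} applied to the closed embedding $T\tohook W$ yields a factorisation $T\tohook W'\tohook W$, so $w\in|W'|$. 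In the second case $E_T\tohook B\times T$ is already an isomorphism, so the universal property of $W_E$ as the representing scheme of $\fiso_{W/B}^{E_W}$ (\cref{thr:closed-isofying}) produces a factorisation $T\tohook W_E\tohook W$, and $w\in|W_E|$.

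The ``in particular'' assertion follows by specialising to $W=X_1^{\cns}$: using the identifications $X_1^{\cns\prime}\cong X'$ (\cref{thr:uni-ordinary-2-rcf-as-sbsf}) and $(X_1^{\cns})_E=X_E^{\cns}$, the first part of the theorem yields a chain $(X_1^{\cns})_{\red}\tohook X'+X_E^{\cns}\tohook X_1^{\cns}$ whose outer terms have the same underlying topological space, forcing the middle one to share it as well. The only mildly delicate step I foresee is the bookkeeping around the two universal properties at the integral-subscheme level, ensuring that $T=T'$ in the sense of \cref{not:W'-of-2-rcf-W} really encodes the Cartier hypothesis required by \cref{thr:blowup-locally-ppal-product-form} and that $T=T_E$ really encodes the isomorphism condition representing $\fiso$; verifying that restriction preserves the cluster family condition is then routine.
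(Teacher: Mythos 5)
Your proof is correct and follows essentially the same route as the paper: the paper's (one-line) proof applies \cref{prop:factoring-integral-case} to the irreducible components of \(W\) with their reduced structure, while you apply it to the point closures \(\overline{\{w\}}_{\red}\) and then upgrade the resulting set-theoretic equality \(|W|=|W'|\cup|W_E|\) to the containment of \(W_{\red}\); the details you supply (restriction preserves the cluster-family condition, \(T=T'\) yields a factorisation through \(W'\) via \cref{thr:blowup-locally-ppal-product-form}, \(T=T_E\) yields one through \(W_E\) via the universal property of \(\fiso\)) are exactly those the paper leaves implicit.
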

\begin{proof}
  By \cref{prop:factoring-integral-case} every irreducible component of \(W\), with its reduced structure, is a closed subscheme of either \(W'\) or \(W_E\).
\end{proof}

Let \((W,\theta)\) be a 2-relative cluster family of \(\pi\).
The closed subscheme \(W_E\) of \(W\) is a section family of \(E\to B\).
Observe that, once we get the closed subscheme \(W'\) of \(W\), there is another natural (and maybe more intuitive) way to obtain a closed subscheme of \(W\) corresponding to a section family of \(E\to B\).
Namely, as the schematic closure of the open embedding \((W\setminus W')\tohook W\), let us call it \(W_E^{ii}\).
This two constructions are equivalent in some cases, but in general, there is just closed embeddings \(W_{\red}\tohook (W'\cup W_E^{ii})\tohook (W'\cup W_E)\tohook W\).
Let us show it with a couple of examples unrelated to section families.

We consider \(B\) the spectrum of the base field, so the projection \(W\times B\to W\) is just the identity and the scheme \(W_E\) is equal to \(E_W\).
An example where \(W_E=W_E^{ii}\) is when \(W\) is the spectrum of \(A=\field[x,y]/(xy)\) and \(E_W\) is the principal subscheme determined by \(x\in\field[x,y]/(xy)\).
In this case, the closed embedding \(W'\tohook W\) corresponds to the natural homomorphism \(A\to A/(y)\) and both, \(W_E\) and \(W_E^{ii}\), are the spectrum of \(A/(x)\).
But if we collapse the line \((y)\subseteq A\) to a non-reduced point, that is \(A=\field[x,y]/(y^2,xy)\), then \(W_E^{ii}\) is empty whereas the schemes \(W_E\cup W'\) and \(W\) are equal.

\section{Examples}
\label{ssec:examples}
In this section, we recover two classic constructions, the classic blow up (see \cref{prop:blowup-example}) and an example of a small resolution, both as particular cases of the blow up \S family.

We also present an example showing that the blow up \S family may also behave quite different from such classic constructions, namely, the dimension of the ambient scheme may decrease.
\medskip

\paragraph{The classic blow up.}
The following proposition shows the classic blow up as a particular case of the blow up \S family.

\begin{proposition}\label{prop:blowup-example}
  Consider \cref{sit:blow-up-section-family}.
  Assume that there is a closed subscheme \(W\) of \(X\) such that \(Z=W_Y\).
  Assume that the structure morphism \(Y\overset{\beta}{\to} S\) is affine and the \(\mathscr{O}_S\)-module \(\beta_{*}\mathscr{O}_Y\) is locally free.
  Let \(b\from \mathfrak{B}\to X\) be the blow up of \(X\) along \(W\).
  Then, the couple \((\mathfrak{B},b)\) is the blow up \S family of \(\pi\from X_Y\to Y\) along \(W_Y\).
  In particular, when \(\beta=\Id_S\), the blow up \S family agrees with the classic blow up.
\end{proposition}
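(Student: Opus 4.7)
The plan is to verify directly the two conditions of \cref{def:blowup-sections-family} for the couple $(\mathfrak{B}, b)$. For the first condition, that $(b_Y)^{-1}(Z) \tohook \mathfrak{B}_Y$ is an effective Cartier divisor, I would observe that $b^{-1}(W) \tohook \mathfrak{B}$ is Cartier by definition of the classical blow up, that $Y \to S$ is flat (since $\beta_{*}\mathscr{O}_Y$ is locally free), and that effective Cartier divisors pull back along flat morphisms. The two nested Cartesian squares arising from $Z = W_Y$ and $\mathfrak{B}_Y = \mathfrak{B} \times_S Y$ identify $(b_Y)^{-1}(Z)$ with $b^{-1}(W) \times_S Y$, so this clause is essentially automatic.

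The second and main task is the universal property. Given an $S$-morphism $g \from T \to X$ such that $(g_Y)^{-1}(Z) = g^{-1}(W) \times_S Y$ is an effective Cartier divisor in $T_Y$, the plan is to show that $g^{-1}(W) \tohook T$ is \emph{itself} an effective Cartier divisor in $T$. Once this is established, the universal property of the classical blow up $b$ of $X$ along $W$ supplies the unique morphism $h \from T \to \mathfrak{B}$ with $b \circ h = g$, and both the existence and uniqueness of $h$ in \cref{def:blowup-sections-family} are inherited from those of the classical universal property. To descend the Cartier condition, I would exploit that $T_Y \to T$ inherits the structure of $Y \to S$: it is affine with locally free pushforward, hence faithfully flat and fpqc. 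The conclusion then rests on the standard fact that being an effective Cartier divisor descends under fpqc morphisms, since it is equivalent to invertibility of the ideal sheaf, which is an fpqc-local property on the ambient scheme.

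The main obstacle is this descent step for effective Cartier divisors; everything else is formal. It can be handled either by citing the Stacks Project or by a short direct argument: working affinely on $T$ with ring $A$, the morphism $T_Y \to T$ corresponds to a faithfully flat extension $A \to A \otimes_R B$, so nonzerodivisors descend, while local principality of the ideal cutting out $g^{-1}(W)$ descends because $A \to A \otimes_R B$ is $\aleph_1$-projective (see \cref{rmk:example-2-of-aleph-1-morphisms} and \cref{lem:extending-ideals-to-nice-tensor-product}). Finally, in the particular case $\beta = \Id_S$ one has $Y = S$, $X_Y = X$ and $Z = W$, and the universal property just verified collapses to the classical universal property of the blow up of $X$ along $W$, confirming the last sentence of the proposition.
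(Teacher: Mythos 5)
Your proposal is correct and takes essentially the same approach as the paper: the paper likewise reduces everything to the affine-local fact that \(\beta\) is given by ring maps \(A\to B\) with \(B\) free over \(A\), so that \(\mathfrak{B}_Y\to\mathfrak{B}\) and \(T_Y\to T\) are faithfully flat and effective Cartier divisors both pull back and descend along them; in fact you are more explicit than the paper, whose proof only flags the pullback direction and leaves the descent step needed for the universal property implicit. One small caveat: your fallback claim that local principality of the ideal descends because \(A\to A\otimes_R B\) is \(\aleph_1\)-projective is not what \cref{lem:extending-ideals-to-nice-tensor-product} provides (that lemma concerns intersections of families of ideals), but this is harmless since your primary argument --- that being an effective Cartier divisor is equivalent to invertibility of the ideal sheaf, which is fpqc-local --- is the right one and suffices.
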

\begin{proof}
  The only delicate point is whether the pullback of the exceptional divisor in \(\mathfrak{B}\) by the projection \(\mathfrak{B}_Y\to \mathfrak{B}\) is again an effective Cartier divisor.
  But it follows straightforwardly using the following fact.

  By the assumptions on \(\beta\), affine locally it is given by homomorphisms \(A\to B\) such that \(B\) is a free \(A\)-module (see~\cite[\stacks{01LL},\stacks{01C6},\stacks{01S8}]{stacks-project}).
\end{proof}

\paragraph{The dimension may decrease.}\label{prg:dimension-may-decrease}
We show an example of the blow up \S family where an irreducible ambient space breaks down into two irreducible components and the dimension of one of them decrease by one.
\medskip

Consider \(\mathcal{S}=\proj^1_{u,v}\times \proj^2_{x,y,z}\) and \(Z\subseteq \mathcal{S}\) the graph of \([u:v]\in \proj^1\to [u:v:0]\in \proj^2\), that is \(Z=V_+(z,vx-uy)\).
\medskip

By \cref{thr:structure-of-blowup-sf-Cartesian-case}, the blow up \S family of the projection \(\mathcal{S}\to \proj^1\) along \(Z\) is the stratification of \(\proj^2\) by the standard affine chart \(\proj^2\setminus V_+(z)\) and \(V_+(z)\).

\paragraph{Small resolution.} We present an example where the blow up \S family along a natural centre becomes a small resolution.
It indicates the possibility that the blow up \S family would offer a procedure to systematise small resolutions.
\medskip

Let \(\field\) be a field and consider the variety \(\aff^4_{\field}\) parametrising matrices
\[
  M=\left(
    \begin{array}{cc}
      x & y \\
      z & w \\
\end{array} \right)
\] and the closed subvariety \(D\subseteq \aff^4\) where the rank of \(M\) is not maximal, or equivalently where the determinant of \(M\) is zero.
Consider the variety \(\mathcal{S}=\proj^1_{u,v}\times D\) and its incidence subvariety
\[
  Z=\{([\lambda],M)\in \mathcal{S} \mbox{ : }M\lambda^t=0\}.
\]
It is a classic result that the projection \(\mathcal{S}\to D\) restricted to \(Z\) is an small resolution of \(D\).
It turns out that the blow up \S family of the projection \(\mathcal{S}\to \proj^1\) along \(Z\) is isomorphic to \(Z\) and then again an small resolution of \(D\).
\medskip

Observe that, by \cref{thr:structure-of-blowup-sf-Cartesian-case}, the variety \(D\setminus\{0\}\) is an open subvariety of such a blow up \S family.
But we do not retrieve the whole ambient variety from this result.
Instead, we replicate the construction of the blow up \S family in \cref{thr:blow-up-S-family-exists}.
\medskip

First, let us construct the following quasiprojective varieties \(V_n\).
Let \(S\) denote stander graded polynomial ring \(\field[u,v]\) and \(S_n\) its degree \(n\) part.
So, we define \(U_n\subseteq \proj(S_n\times S_n\times S_n)\) as the quasiprojective variety corresponding to triplets of forms with no common roots.
\medskip

The blow up \(\tilde{\mathcal{S}}\) of \(\mathcal{S}\) along \(Z\) may be given globally by the equations \(xa-zb\) and \(ya-wb\) in \(\mathcal{S}\times \proj^1_{a,b}\).

Now, we describe the closed subvariety of the universal section family of \(\tilde{\mathcal{S}}\to\proj^1\) corresponding to ``constfy'' by \(\tilde{\mathcal{S}}\to D\).
Clearly, it is the disjoint union \(X\) for all integers \(n\) of the closed subvarieties \(X_n\) of \(D\times V_n\) determined by the equations on the coefficients given by the identities of polynomials,
\begin{align*}
  xA-zB\equiv 0  \\
  yA-wB\equiv 0
\end{align*} where \([A:B]\in V_n\).
The resulting morphism \(b'\from X\to D\) is for each component \(X_n\) the composition of the closed embedding \(X_n\tohook D\times V_n\) and the projection \(D\times V_n\to D\).
\medskip

It is straightforward to see that given \(((x,y,z,w),[A:B])\in X_n\) either the forms \(A,B\) are constants or \((x,y,z,w)=0\).
That is,
\[
  X= X_0\ {\textstyle\coprod}\ \Bigl(\coprod_{n\ge 1}\{0\}\times V_n\Bigr)
\] where \(X_0\cong Z\).
So, the pullback \((\Id_{\proj^1}\times b')^{-1}(Z)\) is an effective Cartier divisor in \(\proj^1\times X_0\) and the whole \(\proj^1\times X_n\) for all \(n\ge 1\).
Hence the blow up of \(\proj^1\times X\) along the locally principal \((\Id_{\proj^1}\times b')^{-1}(Z)\) is \(\proj^1\times X_0\), and then the blow up \S family of \(\mathcal{S}\to\proj^1\) along \(Z\) is \(b'|_{X_0}\from X_0\to D\).

\bibliographystyle{amsplain}
\bibliography{my}

\end{document}